\documentclass[12pt]{amsart}
\usepackage{amssymb}
\usepackage{amsmath}
\usepackage{eurosym}
\usepackage{graphics}
\usepackage{epsfig}
\usepackage{tabu}
\usepackage{graphicx}
\usepackage{cite}           
\usepackage{url}
\usepackage{epstopdf}
\usepackage{hyperref}
\usepackage{amsmath}
\usepackage{amsfonts}
\usepackage{xcolor}
\usepackage{fancyhdr}
\usepackage[margin=1.2in]{geometry}

\newcommand{\R}{\mathbb R}
\newcommand{\dha}{\dot H^1_a}
\newcommand{\bd}{\textbf{d}}
\newcommand{\eps}{\varepsilon}

\newcommand{\la}{\mathcal L_a}

\usepackage[symbol]{footmisc}
\renewcommand{\epsilon}{{\varepsilon}}

\numberwithin{equation}{section}
\newtheorem{theorem}{Theorem}[section]
\newtheorem{lemma}[theorem]{Lemma}

\newtheorem{definition}[theorem]{Definition}
\newtheorem{proposition}[theorem]{Proposition}

\newcommand{\qtq}[1]{\quad\text{#1}\quad}

\newcommand{\C}{\mathbb{C}}

\numberwithin{equation}{section}
\numberwithin{theorem}{section}
\numberwithin{figure}{section}
\numberwithin{table}{section}

\def\({\left(}
\def\){\right)}
\def\<{\left\langle}
\def\>{\right\rangle}

\def\eps{\varepsilon}

\title[Threshold scattering for energy critical  NLS$_a$]{Threshold Scattering for the Energy-Critical NLS with a Repulsive Inverse-Square Potential}

	\author[Z. Ma]{Zuyu Ma}
	\address[Z. Ma]{  The Graduate School of China Academy of Engineering Physics,
		Beijing 100088,\ P. R. China}
	\email{mazuyu23@gscaep.ac.cn}

    \author[Y. Song]{Yilin Song}
	\address[Y. Song]{The Graduate School of China Academy of Engineering Physics,
		Beijing 100088,\ P. R. China}
	\email{songyilin21@gscaep.ac.cn}

\author[K. Yang]{Kai Yang}
\address[K. Yang]{School of Mathematics, SouthEast University, Nanjing, P.R. China 211189}
\email{kaiyang@seu.edu.cn, yangkai99sk@gmail.com}
\thanks{KY was supported by the Jiangsu Shuang
Chuang Doctoral Plan and the Jiangsu Scientific Research Center of Applied Mathematics under Grant BK20233002.}

\author[X. Zhang]{Xiaoyi Zhang}
\address[X. Zhang]{Department of Mathematics, University of Iowa, Iowa City, IA 52242}
\email{xiaoyi-zhang@uiowa.edu}

\begin{document}
\subjclass[2020]{35Q55}
\keywords{energy-critical, NLS, threshold dynamics, scattering, inverse square potential}
\maketitle 

\begin{abstract}

We study the threshold scattering problem for the energy-critical nonlinear Schrödinger equation with a repulsive inverse-square potential $\frac{a}{|x|^2} > 0$ in dimensions $d= 4, 5, 6$. On the energy level surface determined by the ground state of the energy-critical NLS without potential, we show that, despite the absence of a ground state in this setting, a strong form of rigidity persists below the kinetic threshold.
Specifically, we prove that any solution on this energy surface with kinetic energy strictly below that of the ground state is global and scatters to zero. Our approach combines refined modulation analysis, a center-translated global Virial estimate, and a bootstrap argument to control the modulation parameters.

\end{abstract}
\maketitle

\section{Introduction}
In this paper, we consider 
the initial value problem of the focusing NLS with an inverse square potential 
\begin{align} \label{NLSa}
\begin{cases}    (i\partial_t-\mathcal L_a)u= -|u|^\frac{4}{d-2}u, \\ 
u(t,x)|_{t=t_0}=u_0(x) \in \dot H^1(\R^d),
\end{cases}\tag{NLS$_{a}$}
\end{align}
where $u=u(t,x):\R^{ }\times\R^d\to \C$ and 
$$
    \mathcal{L}_a=-\Delta+\frac{a}{|x|^2}.
$$
For $a>-\left(\frac{d-2}{2}\right)^2$, the operator $\mathcal L_a$ is non-negative by sharp Hardy's inequality, and the quadratic form $\langle \mathcal L_a f, f\rangle$ defines a norm equivalent to the homogenous Sobolev space $\dot H^1$:
\[
\sqrt{\langle \mathcal L_a f, f\rangle}=\|\mathcal L_a^{\frac 12} f\|_{L_x^2}\sim \|f\|_{\dot H^1}.
\] 
We denote by $\dot H_a^1(\mathbb R^d)$ the space endowed with this equivalent norm. In particular, $\dot H_a^1$=$\dot H^1$ when $a=0$. 

\medskip 

\noindent Throughout this paper, solutions are understood in the following sense. 
\begin{definition}[Solution, \protect\cite{KMVZZ17}]
\label{D:solution} Let $t_{0}\in I\subset \mathbb{R}$ 
and $u_{0}\in \dot{H}_{a}^{1}(\R^d)$. A function $u:I\times \mathbb{R}%
^{d}\rightarrow \mathbb{C}$ is called a \emph{solution} to \eqref{NLSa} if 
$$u(t,x)\in
C_{t}\dot{H}_{a}^{1}\cap L_{t,x}^{\frac{2(d+2)}{d-2}}(J\times \mathbb R^d)
$$ for any compact interval $%
J\subset I$ and if $u$ satisfies the Duhamel formula 
\begin{equation}
u(t,x)=e^{-i(t-t_{0})\mathcal{L}_{a}}u_{0}+i\int_{t_{0}}^{t}e^{-i(t-s)\mathcal{%
L}_{a}}(|u(s,x)|^{\frac{4}{d-2}}u(s,x))\,ds,  \label{Duhamel}
\end{equation}
for all $t\in I$. The interval $I$ is called the lifespan of $u$. We say that $u$ is
a maximal-lifespan solution if it cannot be extended to a strictly larger interval. If $I=\mathbb{R}$, we call $u$ is a global solution.
\end{definition}
On their lifespan, solutions conserve energy:
\begin{align*}
E_a(u)&=\int_{\R^d}\Big(\frac{1}{2}|\nabla u|^2+\frac{a}{2|x|^2}|u|^2-\frac{d-2}{2d}|u|^\frac{2d}{d-2}\Big)\,dx=E_a(u_0).
\end{align*}
The equation \eqref{NLSa} is energy critical in the sense that the scaling 
$$u(t,x)\mapsto \lambda ^{\frac{2-d}{2}}u(\frac{t}{\lambda ^{2}},\frac{x}{\lambda })
$$ 
leaves the energy invariant.

\medskip

A central object in the analysis of \eqref{NLSa} is the ground state, defined as an optimizer of the sharp Sobolev inequality. 
\[
\|f\|_{L^{\frac{2d}{d-2}}}\le C_d \|f\|_{\dot H^1_a}.
\]
In the case $a\in (-\frac {(d-2)^2}4,0]$ or for $a> 0$ under a radial restriction, a standard variational argument shows that the optimizer is given by a positive, radial solution $W_a$ of  Euler-Largrange equation
\[
\mathcal{L}_a W_a = |W_a|^{\frac{4}{d-2}} W_a.
\]
An explicit formula for $W_a$ was obtained in \cite{Ter96, KMVZZ17}:
\begin{equation}\label{def:Wa}
W_{a}(x):= \big(d(d-2)\beta^2\big)^\frac{d-2}{4}\left(\frac{|x|^{\beta-1}}{1+|x|^{2\beta}}\right)^\frac{d-2}{2}\,, \beta=\sqrt{1+\tfrac{4a}{(d-2)^{2}}}. 
\end{equation}
When $a=0$, we simply write $W=W_0$. 

\medskip
The sharp Sobolev inequality takes the following form. 

\begin{proposition}[Sharp Sobolev embedding, \cite{KMVZZ17}]\label{P:Sob}
Let $d\ge 3$.  
\begin{enumerate}
\item[\rm(i)]  For all $f\in \dot H^1(\mathbb R^d)$ when $a\in( -\bigl(\tfrac{d-2}{2}\bigr)^{2},0]$ and all $f\in \dot H^1_{rad}(\mathbb R^d)$ when $a>0$, we have
\begin{equation}\label{ineq:Sob-}
\|f\|_{L^{\frac{2d}{d-2}}}
\leq\tfrac{\|W_{a}\|_{L^{\frac{2d}{d-2}}}}{\|W_{a}\|_{\dot H^{1}_{a}}}\,
\|f\|_{\dot H^{1}_{a}}.
\end{equation}
When $a=0$, the equality holds iff there exist $\alpha\in \mathbb C$, $\lambda>0$ and $x_0\in \mathbb R^d$ such that $f(x)=\alpha W_a(\lambda (x-x_0))$; in all other cases, the equality holds iff there exist $\alpha\in \mathbb C$, $\lambda>0$ such that $f(x)=\alpha W_a(\lambda x)$.

\item[\rm(ii)]  If $a>0$ and $f\neq 0$, then
\begin{equation}\label{ineq:Sob+}
\|f\|_{L^{\frac{2d}{d-2}}}
<\tfrac{\|W\|_{L^{\frac{2d}{d-2}}}}{\|W \|_{\dot H^1}} 
\|f\|_{\dot H^{1}_{a}}.
\end{equation}
In this case, equality never holds and the sharp constant cannot be improved.
\end{enumerate}
\end{proposition}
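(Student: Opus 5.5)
Part (i) I would obtain by a variational argument, and part (ii) by comparison with the free Sobolev inequality together with a translation-to-infinity construction.

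\emph{Part (i).} For $a\le 0$ and general $f$, or $a>0$ and radial $f$, I would minimize the Weinstein-type functional $J(f)=\|f\|_{\dot H^1_a}^2/\|f\|_{L^{2d/(d-2)}}^2$. For $a=0$ this is Aubin--Talenti. For $a\neq 0$ I would first reduce to radial functions. When $a<0$, the rearrangement $f^*$ does not increase $\|\nabla f\|_2$ (P\'olya--Szeg\H{o}), does not decrease $\int |x|^{-2}|f|^2$ (Hardy--Littlewood, since $|x|^{-2}$ is symmetric decreasing), and preserves $L^p$ norms. Since $a<0$, the potential term then does not increase $J$ either. When $a>0$, radiality is imposed by hypothesis.

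On radial functions I would use the change of variables $f(r)=r^{\frac{(d-2)(\beta-1)}{2}}g(r^\beta)$, in the spirit of Terracini. A direct computation converts $\|f\|_{\dot H^1_a}^2$ into $c\,\|g\|_{\dot H^1}^2$ in a fractional dimension $D$, and converts $\|f\|_{2d/(d-2)}$ into the corresponding $L^{2D/(D-2)}$ norm. The radial Bliss inequality, a one-dimensional statement valid for real $D>2$, then gives the sharp constant with optimizers $g=(1+r^2)^{-(D-2)/2}$ up to scaling and multiplication. Undoing the substitution yields $W_a$ in \eqref{def:Wa}, and hence the equality cases $f=\alpha W_a(\lambda x)$. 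Translations only appear when $a=0$, because the potential breaks translation invariance.

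\emph{Part (ii), strict inequality.} For $a>0$ we have $\|f\|_{\dot H^1}<\|f\|_{\dot H^1_a}$ whenever $f\ne0$, since $\int a|x|^{-2}|f|^2>0$. Combining this with the free Sobolev inequality \eqref{ineq:Sob-} at $a=0$ gives
\[
\|f\|_{L^{\frac{2d}{d-2}}}\le \tfrac{\|W\|_{L^{\frac{2d}{d-2}}}}{\|W\|_{\dot H^1}}\|f\|_{\dot H^1}<\tfrac{\|W\|_{L^{\frac{2d}{d-2}}}}{\|W\|_{\dot H^1}}\|f\|_{\dot H^1_a}.
\]

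\emph{Part (ii), optimality of the constant.} I would take $f_n=W(\cdot-x_n)$ with $|x_n|\to\infty$. Then $\int |x|^{-2}|W(x-x_n)|^2\,dx\to0$, by splitting the integral into the region $|x-x_n|<|x_n|/2$, where $|x|^{-2}\lesssim |x_n|^{-2}$, and its complement, where Hardy's inequality combined with the decay of $W$ and dominated convergence gives smallness. Consequently $\|f_n\|_{\dot H^1_a}\to\|W\|_{\dot H^1}$ while $\|f_n\|_{L^{2d/(d-2)}}=\|W\|_{L^{2d/(d-2)}}$, so the ratio tends to the stated constant.

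The main obstacle is part (i) for $a\ne0$: carrying out the radial change of variables exactly and invoking the sharp Bliss inequality in noninteger dimension, including its uniqueness statement, to identify $W_a$ and the equality cases. For $a<0$ one also needs the rearrangement step to be strict for non-radial $f$; this follows from strict Hardy--Littlewood for $|x|^{-2}$, which forces $|f|=|f|^*$ at equality.
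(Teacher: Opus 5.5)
Your proposal is correct. The paper does not prove this proposition; it imports it from \cite{KMVZZ17}. Your route is the standard proof of that result: rearrangement plus Terracini's radial substitution for (i), and comparison with the free Sobolev inequality plus translating $W$ to infinity for (ii).

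One correction to your description of the substitution. With $f(r)=r^{\gamma}g(r^\beta)$ and $\gamma=\frac{(d-2)(\beta-1)}{2}$, one gets exactly $\|f\|_{\dot H^1_a}^2=\beta\|g\|_{\dot H^1(\R^d)}^2$ and $\|f\|_{L^{2^*}}^{2^*}=\beta^{-1}\|g\|_{L^{2^*}(\R^d)}^{2^*}$. The zeroth-order terms cancel because $\gamma^2+a=\gamma(d-2)\beta$. So the target dimension is $d$ itself, not a fractional $D$. Indeed, the Lebesgue exponent $\frac{2d}{d-2}$ cannot change under a substitution of this form. The optimizer that pulls back to $W_a$ is therefore the radial Aubin--Talenti profile $(1+s^2)^{-\frac{d-2}{2}}$ in $\R^d$, and no noninteger-dimensional Bliss inequality is needed.
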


\medskip

 To place our results in context, we briefly review several known results for \eqref{NLSa}. 

\medskip

Although it may seem counterintuitive, the free case 
$(a=0)$ and the attractive potential case 
$(a<0)$ share many similarities, which we outline below. A fundamental result in this direction is the scattering/blow-up dichotomy below the ground state threshold, first established in the free case and subsequently extended to inverse-square potentials in a series of works \cite{KM06,KV10,Dodson19,Y20,Y21,YZ23}. For simplicity, we record here only the scattering component of these results. 
\begin{theorem}\label{scatter}
In either of the following two scenarios: 

\begin{itemize}
 \item  {$a=0$}. $ d\ge 4,\  u_0\in \dot H^1(\mathbb R^d)$; or $ d=3$, $u_0\in \dot H^1_{rad}(\mathbb R^d)$\\
 \item $ {a\in (-\big(\frac{d-2}{2}\big)^2+\big(\frac{d-2}{d+2}\big)^2,0)}$. $d=4, 5,6,\  u_0\in \dot H^1(\mathbb R^d)$; or $d=3$, $u_0\in \dot H_{rad}^1(\mathbb R^d)$. 
\end{itemize}
Solution $u$ to \eqref{NLSa} with initial data $u_0$ is certain to exist globally and scatter to $0$ if either of the following two estimates is satisfied
\begin{itemize}
\item $E_a(u_0)<E_a(W_a)$, $\|u_0\|_{\dot H^1_a}< \|W_a\|_{\dot H^1_a}$\\
\item $\sup_{t\in (-T_*, T^*)}\|u(t)\|_{\dot H^1_a}<\|W_a\|_{\dot H^1_a}. $
\end{itemize}
\end{theorem}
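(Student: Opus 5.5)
This theorem collects known results from \cite{KM06,KV10,Dodson19,Y20,Y21,YZ23}, so the plan is to follow the concentration-compactness/rigidity roadmap of Kenig--Merle, adapted to $\mathcal L_a$. We only sketch the steps.

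\textbf{Step 1: variational reduction.} We first show that the two hypotheses lead to the same situation: a uniform kinetic bound
\[
\sup_t\|u(t)\|_{\dot H^1_a}\le(1-\delta)\|W_a\|_{\dot H^1_a}
\]
for some $\delta>0$. Under the first hypothesis we use Proposition~\ref{P:Sob}(i), which is valid here since $a\le 0$ or the data are radial. It gives
\[
E_a(f)\ge \tfrac12\|f\|_{\dot H^1_a}^2-\tfrac{d-2}{2d}C^{\frac{2d}{d-2}}\|f\|_{\dot H^1_a}^{\frac{2d}{d-2}}.
\]
The right-hand side is maximized exactly at $\|W_a\|_{\dot H^1_a}$ with value $E_a(W_a)$. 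Conservation of energy and continuity in time then trap $\|u(t)\|_{\dot H^1_a}$ below $\|W_a\|_{\dot H^1_a}$, with a gap $\delta$ that depends on $E_a(W_a)-E_a(u_0)$. The same inequality also gives coercivity of the virial functional:
\[
\|u\|_{\dot H^1_a}^2-\|u\|_{L^{2d/(d-2)}}^{2d/(d-2)}\gtrsim_\delta \|u\|_{\dot H^1_a}^2.
\]
Under the second hypothesis we apply the concentration argument at the level of $\sup_t\|u\|_{\dot H^1_a}$ directly, as in Kenig--Merle and Dodson, without needing the energy trapping.

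\textbf{Step 2: minimal blow-up solution.} We argue by contradiction. Let $E_c$ (or the kinetic critical level) be the threshold below which scattering holds, and suppose it lies below $E_a(W_a)$. We combine the local theory, namely Strichartz estimates for $e^{-it\mathcal L_a}$ and the equivalence of Sobolev spaces $\|\mathcal L_a^{s/2}f\|_{L^p}\sim\||\nabla|^s f\|_{L^p}$ from \cite{KMVZZ17}, with a linear profile decomposition adapted to $\mathcal L_a$. This produces an almost periodic critical solution whose orbit is precompact modulo scaling.

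\emph{This is the main obstacle.} Profiles concentrating away from the origin, i.e.\ with $|x_n|/\lambda_n\to\infty$, evolve by the free NLS rather than by \eqref{NLSa}. They must be handled by the free theory (the case $a=0$) together with a nonlinear approximation argument. This is precisely what requires $E(W)\ge E_a(W_a)$ when $a<0$, and the restriction on $a$ together with $d=4,5,6$ needed for the Sobolev-space equivalence and the stability theory (fractional chain rules with $\frac4{d-2}$ nonlinearity). In the radial $d=3$ case translations are absent, which simplifies this step.

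\textbf{Step 3: rigidity.} We exclude the critical element with a localized virial identity. The identity is
\[
\partial_t^2\int\phi_R|u|^2 = 8\big(\|u\|_{\dot H^1_a}^2-\|u\|_{L^{2d/(d-2)}}^{2d/(d-2)}\big)+\text{error},
\]
where the potential term $\frac{a}{|x|^2}$ contributes with the right homogeneity. Compactness controls the error terms; Step 1 gives positivity of the main term. This excludes the finite-time blow-up and soliton-like scenarios; the remaining frequency-cascade scenario is excluded by Dodson-type long-time Strichartz estimates in the spirit of \cite{Dodson19}. Hence the theorem follows.
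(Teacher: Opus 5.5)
The paper gives no proof of this statement. It is quoted from \cite{KM06,KV10,Dodson19,Y20,Y21,YZ23} (with \cite{KMVZZ17} for the radial $d=3$ case with potential), and your outline is the Kenig--Merle concentration-compactness/rigidity scheme those works follow. Your Steps 1 and 2 are accurate:
\begin{itemize}
\item the energy trapping uses Proposition~\ref{P:Sob}(i), which holds for all data since $a\le 0$, so the radial assumption plays no role there;
\item for $a\neq 0$ the critical element is compact modulo scaling alone, because profiles with $|x_n|/\lambda_n\to\infty$ are handled by the free theory. This is exactly where $\|W_a\|_{\dot H^1_a}\le\|W\|_{\dot H^1}$, $E_a(W_a)\le E_0(W)$ and the $a=0$ result in $d=4,5,6$ enter.
\end{itemize}

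Step 3, however, puts the difficulties in the wrong places.

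\emph{The case $a\neq 0$, or radial data.} Here the critical element is centered at the origin. After the standard reduction to a solution whose spatial scale is bounded above uniformly in time (Kenig--Merle's $\lambda(t)\ge A_0$), the localization radius is uniform in $t$. The truncated virial then excludes the soliton-like and the frequency-cascade scenarios alike. No long-time Strichartz estimates are needed, and the cited works for $a\neq 0$ do not use them; you would otherwise have to develop them for $\mathcal L_a$. Finite-time blow-up is excluded by the localized mass identity together with mass conservation (the solution is shown to have zero mass), not by the virial.

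\emph{The case $a=0$ with non-radial data.} Here compactness is modulo translations as well. Your claim that ``compactness controls the error terms'' fails unless you control the center $x(t)$, e.g.\ $|x(t)|=o(t)$. That is the genuinely hard step, and your sketch does not address it. It is done in \cite{KV10} for $d\ge 5$ via negative regularity and zero momentum. It is done in \cite{Dodson19} for $d=4$ via long-time Strichartz estimates combined with frequency-localized Morawetz-type estimates; that is where Dodson's tools actually belong.

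With these corrections your sketch matches the literature the paper relies on.
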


\medskip

 Having understood the scattering behavior below the ground-state energy level $E_a(u_a)<E_a(W_a)$, a natural and important question is to investigate solutions on the boundary of this energy cone, namely those satisfying 
\[
E_a(u_0)=E_a(W_a).
\] 
For this question, a series of studies —initiated by Duyckaerts and Merle \cite{DM08} in the free case and by Yang, Zeng and Zhang \cite{YZZ22} in the attractive potential case—has led to a classification of solutions at the threshold energy level into four distinct scenarios: scattering solutions, solitary waves, dynamics along stable/unstable manifolds of the ground state, and finite-time blow-up. See also \cite{CFR22, LZ09, SZ23,MMMZ25} for subsequence developments, including extensions to higher dimensions and to the non-radial setting.

\medskip

We now turn to the case $a>0$, where the results in \cite{Y20, Y21, YZ23} show that the scattering threshold is determined by the ground state in the free case. 
\begin{theorem}[\cite{Y20, Y21, YZ23}]\label{scatterpa} 
Let $a>0$. Let $u_0\in \dot H^1(\mathbb R^d)$ for $d=4,5,6$ and $u_0\in \dot H^1_{rad}(\mathbb R^d)$ for $d=3$. Suppose $u$ is the maximal life-span solution to \eqref{NLSa} on $(-T_*, T^*)$. Then $u$ must be global and scatter in both time directions if either of the following conditions is satisfied: 
\begin{itemize}
\item $\sup\limits_{t\in (-T_*, T^*)}\|u(t)\|_{\dot H^1_a}<\|W\|_{\dot H^1}$,
\item $E_a(u_0)<E_0(W),\  \|u_0\|_{\dot H^1_a}<\|W\|_{\dot H^1}$. 
\end{itemize}
\end{theorem}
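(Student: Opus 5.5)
This result is quoted from \cite{Y20, Y21, YZ23}, so I would reconstruct its proof by the concentration-compactness/rigidity method of Kenig--Merle. The organizing fact is Proposition~\ref{P:Sob}(ii): for $a>0$ the constant in the Sobolev inequality for $\dot H^1_a$ is the free constant $\|W\|_{L^{\frac{2d}{d-2}}}/\|W\|_{\dot H^1}$, and it is not attained.

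\emph{Step 1: reduce the second condition to the first.} Set $C_W:=\|W\|_{L^{\frac{2d}{d-2}}}/\|W\|_{\dot H^1}$ and $f(y)=\tfrac12 y^2-\tfrac{d-2}{2d}C_W^{\frac{2d}{d-2}}y^{\frac{2d}{d-2}}$. By \eqref{ineq:Sob+}, $E_a(u)\ge f(\|u\|_{\dot H^1_a})$. The function $f$ increases on $[0,\|W\|_{\dot H^1}]$ and attains its maximum $E_0(W)$ there. A continuity argument then shows that $E_a(u_0)<E_0(W)$ and $\|u_0\|_{\dot H^1_a}<\|W\|_{\dot H^1}$ give
\[
\sup_t\|u(t)\|_{\dot H^1_a}\le(1-\delta)\|W\|_{\dot H^1}.
\]
This also yields the coercivity $\|u\|_{\dot H^1_a}^2-\|u\|_{L^{\frac{2d}{d-2}}}^{\frac{2d}{d-2}}\gtrsim_\delta\|u\|_{\dot H^1_a}^2$. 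So it suffices to treat the first condition.

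\emph{Step 2: local theory and profile decomposition adapted to $\mathcal L_a$.} I would use the Strichartz estimates and the equivalence of Sobolev spaces for $\mathcal L_a$ (Killip--Miao--Visan--Zhang--Zheng, valid when $a$ is not too negative; here $a>0$, so they hold). These give a linear profile decomposition for $e^{-it\mathcal L_a}$. Its profiles are of two kinds:
\begin{itemize}
\item profiles with centers $x_n$ satisfying $|x_n|/\lambda_n\to\infty$, which see no potential and evolve by free NLS;
\item profiles that stay near the origin, which evolve by $\mathcal L_a$-NLS.
\end{itemize}
For a translating profile, the free NLS result (Theorem~\ref{scatter}, $a=0$) gives scattering because its kinetic energy is below $\|W\|_{\dot H^1}$. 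Its nonlinear evolution must then be transplanted into an approximate solution of \eqref{NLSa}, using convergence of the operators $\mathcal L_a^{x_n}\to-\Delta$. This is exactly why the threshold is the free $W$ rather than $W_a$: translated bubbles feel no potential.

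\emph{Step 3: extract and exclude a critical element.} Assume the theorem fails. Take a minimal kinetic level $E_c<E_0(W)$ at which it fails. Using the decoupling of $\|\cdot\|_{\dot H^1_a}$ and of the energy, Steps 1--2 yield a minimal blowup solution whose orbit is precompact modulo scaling only. Translation is excluded because translated profiles were handled by Step 2. I would then kill this critical element with a localized virial identity. Its time derivative is
\[
8\int\Big(|\nabla u|^2+\frac{a}{|x|^2}|u|^2-|u|^{\frac{2d}{d-2}}\Big)\,dx \;+\; \text{errors},
\]
and the main term is bounded below by $c\|u\|_{\dot H^1_a}^2$ via the coercivity from Step 1. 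The Hardy term has a favorable sign because $a>0$. Following Killip--Visan and Dodson, one rules out frequency cascades and soliton-like cases.

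The main obstacle is Step 2. Proving the nonlinear profile approximation for profiles drifting to spatial infinity requires showing that $e^{-it\mathcal L_a}$ and $e^{it\Delta}$ agree asymptotically on translated data in Strichartz norms, and the restriction $d=4,5,6$ (or radial $d=3$) enters exactly there and in the stability theory.
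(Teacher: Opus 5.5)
The paper gives no proof of this statement: it is quoted from \cite{Y20, Y21, YZ23}, and the closest argument in the text is the sketch in Section~\ref{S:compact}. Your architecture matches that strategy. Centred profiles evolve by \eqref{NLSa}, profiles with $|x_n|/\lambda_n\to\infty$ are handled by the free theory, so the critical element is compact modulo scaling alone and can be killed by a virial argument centred at the origin. Step~1 is also correct, but it leaves you proving the first bullet, and your handling of the profiles does not work under that hypothesis.

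\textbf{Step~2.} You apply Theorem~\ref{scatter} with $a=0$ to a translated profile because its kinetic energy is below $\|W\|_{\dot H^1}$ at one time. That is not a hypothesis of the theorem, which needs either $E_0(\phi^j)<E_0(W)$, or $\sup_t\|v^j(t)\|_{\dot H^1}<\|W\|_{\dot H^1}$ for the \emph{nonlinear} profile. Under the first bullet you only know $E_a(u)<\frac12\|W\|_{\dot H^1}^2$, and this exceeds $E_0(W)=\frac1d\|W\|_{\dot H^1}^2$. So a translated profile can have free energy above $E_0(W)$; for instance, when $t_n\to\pm\infty$ its energy is $\frac12\|\phi^j\|_{\dot H^1}^2$. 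This is exactly why Lemma~\ref{ENP} carries energy hypotheses.

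\textbf{Step~3.} The same problem appears here. An induction on $E_c=\sup_t\|u(t)\|_{\dot H^1_a}^2$ should be compared with $\|W\|_{\dot H^1}^2$, not with $E_0(W)$. It closes only if every nonlinear profile satisfies $\sup_t\|v_n^j(t)\|_{\dot H^1_a}^2\le E_c$, strictly less for centred ones. Decoupling of $\|\cdot\|_{\dot H^1_a}$ and $E_a$ at a single time says nothing about later times.

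\textbf{The missing ingredient.} You need the Killip--Visan kinetic-energy decoupling along the nonlinear flow. On any interval where the nonlinear profiles have finite scattering norm, uniformly in $t$,
\[
\|u_n(t)\|_{\dot H^1_a}^2=\sum_{j\le J}\|v_n^j(t)\|_{\dot H^1_a}^2+\|w_n^J\|_{\dot H^1_a}^2+o(1).
\]
Combine this with a bootstrap in time. In it, the kinetic energy of each other profile is bounded below by twice its conserved energy, which by \eqref{Ec1} is comparable to its initial kinetic energy. This keeps every nonlinear profile at kinetic level at most $E_c<\|W\|_{\dot H^1}^2$ throughout its lifespan. Only then does the sup-kinetic free theorem apply to translated profiles and the induction hypothesis to centred ones.

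\textbf{Alternative route.} You could instead run Kenig--Merle induction on energy for the second bullet. There, energy decoupling and the positivity in \eqref{profile-control} put every profile below $E_0(W)$, and your Step~2 is valid as written. But that does not yield the first bullet, which is the form the paper actually invokes in Lemma~\ref{sequencec}, where $E_a(u)=E_0(W)$.

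\textbf{Minor point.} Once translation is absent, the Kenig--Merle rigidity argument suffices: a one-sided bound on the scale along a half-line, a localized virial at the origin, and localized mass for finite-time blowup. The Killip--Visan/Dodson cascade machinery is aimed at a moving centre, which does not occur here.
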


\medskip

 In this paper, we investigate the behavior of solutions on the energy surface defined by 
\[
E_a(u)=E_0(W). 
\]
Due to the sharp Sobolev inequality, no ground state is attained at this energy level. This indicates that the dynamics on this surface may exhibit qualitatively different features from priori works. 

Our main result shows that, despite the absence of a ground state, a strong form of rigidity persists below a threshold. Specifically, we prove that any solution on this energy surface with subcritical kinetic energy is global and scatters to zero.

\begin{theorem}[Threshold scattering] \label{thm1}
Let $a>0$, $d\in \{4, 5, 6\}$, $u_0\in \dot H^1(\R^d)$ with 
\begin{align}\label{condsub}
E_a(u_0)=E_0(W),\qquad \| u_0\|_{\dot H_a^1}<\| W\|_{\dot  H^1(\R^d)}.
\end{align} 
Let $u:I\times\R^d\to\C$ be the maximal-lifespan solution to \eqref{NLSa} with $u(0)=u_0$. 
Then  $I=\mathbb R$ and $u$ satisfies 
\begin{align*}
\|u\|_{L_{t,x}^{\frac{2(d+2)}{d-2}}(\mathbb R\times \mathbb R^d)}<\infty,
\end{align*}
which further implies scattering: there exist $u_{\pm}\in \dot H^1(\mathbb R^d)$ such that
\[
\|u(t)-e^{-it\mathcal L_a}u_{\pm}\|_{\dot H^1}\to 0, \mbox{ as } \to \pm \infty. 
\]
\end{theorem}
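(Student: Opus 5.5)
We argue by contradiction and use a concentration-compactness/rigidity scheme adapted to the threshold. First, by the sharp Sobolev inequality \eqref{ineq:Sob+}, energy conservation and a continuity argument, the condition $\|u(t)\|_{\dot H^1_a}<\|W\|_{\dot H^1}$ persists on the whole lifespan $I$. Indeed, $E_a(u)=E_0(W)$ together with \eqref{ineq:Sob+} forces $\|u(t)\|_{\dot H^1_a}\neq\|W\|_{\dot H^1}$ at every time, since equality would give $E_a(u)>E_0(W)$. In particular $u$ is bounded in $\dot H^1$, and the virial functional
\[
\|u\|_{\dot H^1_a}^2-\|u\|_{L^{2d/(d-2)}}^{2d/(d-2)}
\]
is strictly positive. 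It is not uniformly positive, however: it degenerates exactly when $\|u(t)\|_{\dot H^1_a}\to\|W\|_{\dot H^1}$. Introduce
\[
\delta(t)=\bigl|\|W\|_{\dot H^1}^2-\|u(t)\|_{\dot H^1_a}^2\bigr|.
\]
If $\inf_{t\in I}\delta(t)>0$, then $\sup_t\|u(t)\|_{\dot H^1_a}<\|W\|_{\dot H^1}$ and Theorem \ref{scatterpa} gives global existence and scattering. So we may assume that there is a sequence $t_n\in I$ with $\delta(t_n)\to0$.

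Next we do modulation analysis near the degenerate set. Using Proposition \ref{P:Sob}(ii) and the $a=0$ characterization of Sobolev optimizers, we show a compactness statement: if $E_a(f)=E_0(W)$ and $\delta(f)\to0$, then $f$ must be close in $\dot H^1$ to $e^{i\theta}\lambda^{(d-2)/2}W(\lambda(x-x_0))$ with $\lambda|x_0|\to\infty$. The reason is that the potential term $\int a|x|^{-2}|f|^2$ must vanish in the limit, and this only happens when the bubble escapes the origin relative to its scale. From this we obtain modulation parameters $\theta(t),\lambda(t),x(t)$ for $\delta(t)<\delta_0$, with orthogonality conditions and the estimate
\[
\|u-W_{[\theta,\lambda,x]}\|_{\dot H^1}+|\lambda'|/\lambda^3+|x'|/\lambda+|\theta'|/\lambda^{2}\lesssim\delta(t)+(\lambda|x|)^{-1}.
\]
Here the error $(\lambda|x|)^{-1}$ comes from the potential acting on the translated bubble. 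The energy identity $E_a(u)=E_0(W)$ also gives the quantitative relation $a\int|x|^{-2}|u|^2\lesssim\delta$, roughly $(\lambda|x|)^{-2}\lesssim\delta$. Hence the potential error is controlled by $\delta^{1/2}$, and we expect to need the sharper relation coming from the second-order expansion.

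The main step is a \emph{center-translated} localized virial estimate. We use a truncated weight $\phi_R(x-x(t))$ centered at the modulated bubble center; the standard identity with a potential yields the repulsive contribution $4a\int|x|^{-2}|u|^2\ge0$, which has the good sign. Away from the degenerate set, the virial derivative dominates $\delta(t)$. Near it, we subtract the contribution of the bubble, whose virial is exactly zero by the Pohozaev identity for $W$, and bound the translated derivative from below by $c\,\delta(t)$ minus error terms involving $|x'(t)|$ and tails. The error terms are absorbed using the modulation bounds. Integrating over $[t_1,t_2]$ gives
\[
\int_{t_1}^{t_2}\delta(t)\,dt\lesssim R\bigl(\delta(t_1)+\delta(t_2)\bigr)
\]
with $R$ tied to the size of $|x(t)|$ and $\lambda(t)^{-1}$.

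A bootstrap argument controls $\lambda(t)$ and $x(t)$. Since $|\lambda'|/\lambda^3\lesssim\delta$, the integral bound keeps $\lambda$ comparable to a constant and $x(t)$ of bounded drift on intervals where the bootstrap holds. Then $\int\delta\,dt$ is finite, and $\delta(t_n)\to0$ forces $\delta\equiv0$. This is impossible, since $\delta>0$ everywhere by the first step, and the contradiction proves the theorem. Scattering then follows from the standard Strichartz theory for $\mathcal L_a$.

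The hardest step is closing the bootstrap. The repulsive potential wants to push the center $x(t)$ to infinity, so the spatial localization scale $R$ is not a priori bounded. I would first try to quantify the outward drift via $x'\sim$ momentum and the relation $(\lambda|x|)^{-2}\lesssim\delta$. Choosing $R(t)\sim|x(t)|$, the weight must grow slower than $\delta$ decays along a sequence, in the spirit of Duyckaerts–Merle.
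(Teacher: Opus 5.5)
There is a genuine gap at the step you yourself flag as the hardest, and the route you sketch for it would not close as stated. If the localization radius is tied to the distance from the origin, $R(t)\sim|x(t)|$, then $|\partial_t V_R|\lesssim R^2\delta$ (it is $R^2$, not $R$). Feeding $\int\delta\,dt\lesssim R^2\delta$ into $|\lambda'|/\lambda\lesssim\lambda^2\delta$ bounds the drift of $\log\lambda$ only by a quantity of order $(\lambda|x|)^2\delta$. By your own relation $(\lambda|x|)^{-2}\lesssim\delta$, this is $\gtrsim 1$.

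The missing idea is to take $R$ a fixed large multiple of the \emph{bubble scale} and to freeze the center at the initial bubble position. In the paper's normalization $u(t)\approx e^{-i\theta}\mu^{\frac{d-2}{2}}W(\mu x+x(t))$, so $|x(t)|$ is the distance to the origin in bubble units. One takes $V_R(t)=\int\phi_R\bigl(x+\tfrac{x(0)}{\mu(0)}\bigr)|u(t,x)|^2\,dx$ with $R=C_1/\mu(0)$. The tails are then controlled by $\dot H^1$-closeness to a single bubble. Moreover, small $\mathbf d$ forces $|x(0)|\ge C_2\ge 8C_1$, so the support of the weight avoids the origin.

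This is essential. For a translated weight the potential contributes $4a\int\frac{y}{|y|^4}\cdot\nabla\phi_R(y-c)\,|u|^2\,dy$, which has no sign; your ``good sign'' holds only for a weight centered at $0$. The term is harmless only because it is $O\bigl(\int|u|^2|x|^{-2}\bigr)=O(\mathbf d^2)$. That uses the sharp relation $\int|u|^2|x|^{-2}\lesssim\mathbf d^2$, which comes from $Q(W)<0$ and the coercivity of $Q$. With only $\lesssim\mathbf d$, the term would compete with the main term $\frac{16}{d-2}\mathbf d$.

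One then gets $\partial_{tt}V_R\ge\frac{8}{d-2}\mathbf d$ and $|\partial_tV_R|\lesssim R^2\mathbf d$. Hence $\int_0^T\mathbf d\,dt\lesssim C_1^2\delta_0\,\mu(0)^{-2}$. The modulation bounds then keep $\mu(t)\sim\mu(0)$ and the bubble within $\frac{1}{2\mu(0)}$ of the frozen center, which closes the bootstrap.

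Your endgame is also not a valid deduction. $\int\delta\,dt<\infty$ together with $\delta(t_n)\to0$ along one sequence does not force $\delta\equiv0$: for the potential-free NLS, $W^-$ does exactly this with $\delta>0$. The contradiction must come from the potential. The bootstrap keeps $|x(t)|$ bounded for all $t\ge0$, whereas $\int|W(y+x(t))|^2|y|^{-2}\,dy\lesssim\mathbf d(u(t))$ forces $|x(t_n)|\to\infty$ along $\mathbf d(u(t_n))\to0$.

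Second, nothing in your plan covers times when $\delta(t)$ is not small. There is then no modulated center to translate to. Without localization of $u(t)$, the tail errors of the truncated virial (truncation is unavoidable for $\dot H^1$ data) are not bounded by $\delta$.

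The paper proves the virial/bootstrap rigidity only for solutions with $\mathbf d(u(t))\le\delta_0$ for all $t\ge0$ (Theorem \ref{rigidity}), and reduces to that case by concentration compactness. A non-scattering solution is almost periodic modulo scaling and translation (Theorem \ref{compact}). If $\mathbf d$ exceeds $\delta_0$ infinitely often, one passes to the limit at the exit times $t_n^+$ where $\mathbf d(u(t_n^+))=\delta_0$. Lemma \ref{distant}, which rests on the threshold classification for the potential-free NLS, ensures the compactness center does not escape relative to the scale at those times. So the limit is a genuine NLS$_a$ solution that fails to scatter and stays $\delta_0$-close forever, contradicting Theorem \ref{rigidity}.

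You also need global existence (Proposition \ref{blowup}). Since $\delta>0$ on $I$, your sequence with $\delta(t_n)\to0$ must tend to an endpoint of $I$, and that endpoint could a priori be finite.
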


\medskip

 This result provides a partial dynamical classification at the threshold energy level and shows that, even without a ground state manifold, solutions with subcritical kinetic energy exhibit purely dispersive behavior. In contrast, the dynamics of solutions with supercritical kinetic energy on the same energy surface remain open and appear to be substantially more intricate.

\medskip

In recent years, there has been significant progress on threshold dynamics for nonlinear Schrödinger equations in related settings. See, for instance, the study of the 3D cubic NLS outside a smooth convex obstacle by Duyckaerts, Landoulsi, and Roudenko \cite{DLR22}, as well as the work on the 3D cubic NLS with a repulsive potential by Miao, Murphy, and Zheng \cite{MMZ23}. In these settings--namely, inter-critical problems posed in the inhomogeneous Sobolev space $H^1$—the number of invariant symmetries is reduced, and the corresponding critical element is almost periodic with respect to translations only. Moreover, in the radial case when the translation symmetry is absent and the critical element is almost periodic solely with respect to scaling, an analogue threshold scattering has been established in \cite{HI25}.

By contrast, our problem is non-radial and energy-critical in nature, the almost periodicity of the critical element must incorporate both scaling and translation symmetries. This additional complexity constitutes the main technical challenge and requires  more delicate analysis.

\medskip

Our approach combines the concentration–compactness method with a refined modulation and a global Virial analysis adapted to the present setting. After establishing the variational structure on the energy surface--quantified by a distance function that measures the deviation of a solution from a ground state profile in the homogeneous Sobolev norm--a key ingredient is a delicate bootstrap scheme embedded within a global Virial analysis centered at an appropriately translated spatial point. This framework allows precise control of the modulation parameters simultaneously, and yields a crucial rigidity result, which rules out non-scattering solutions that remain close to the ground state manifold, ultimately leading to the proof of scattering. 

\medskip

Finally, we relate our work to recent developments on threshold solutions for inhomogeneous nonlinear Schrödinger equations \cite{CM23, CFM26, CFM26s, LYZ24} and other related dispersive models 
\cite{AHI24, AM24, DR10, KVZ12, MSZ24, MXZ13, YZZ25}.
Some of these results suggest a broader perspective in which the absence of a ground state does not preclude rigidity, but instead leads to new mechanisms governing the global dynamics.

 \medskip
The rest of the paper is organized as follows. In Section \ref{S:Pre}, we collect several properties of the operator $\la$, the local theory of \eqref{NLSa} and the coercivity of the energy.   In Section \ref{S:compact}, we prove the existence of an almost periodic solution assuming Theorem \ref{thm1} fails. In Section \ref{virial3}, we perform a modulation analysis for solutions near the ground state solution $W$. In Section \ref{S:rigidity}, we establish a rigidity result (Theorem \ref{rigidity} ) that precludes the existence of non-scattering solution that remains close to the ground state manifold for all time. Finally in Section \ref{S:reduction},
 we rule out all remaining possibilities, thereby completing the proof of the main theorem (Theorem \ref{thm1}).

\section{Preliminaries}\label{S:Pre}

 \textbf{Notations.} If $A$ and $B$ are non-negative quantities, we write $A\lesssim B$ to mean that $A\leq CB$ for some constant $C>0$. 
 
For the rest part of the paper, if not specified we restrict our consideration to $a>0$. 

\medskip
 
We use $\mathbf g_{\lambda_0, x_0}$ and $\mathbf{T}_{\lambda_0, x_0,t_0}$ to denote the symmetry transformation on spatial and temporal-spatial functions.
\begin{align}\label{nota1}
\mathbf g_{\lambda_0, x_0}(f)=\lambda_0^{-\frac{d-2} 2}f\bigl(\tfrac{x-x_0}{\lambda_0}\bigr), \quad \mathbf{T}_{\lambda_0, x_0,t_0} (v)=\mathbf g_{\lambda_0, x_0}(v(\tfrac{t}{\lambda_0^2}+t_0,x)).
\end{align}
In Section \ref{virial3}, we also use the following notation for symmetry transformation. 
\begin{align}\label{nota2}
 f_{[\theta,\mu,X]}=e^{i\theta}\mathbf g_{\mu,X} (f).  
\end{align}

\medskip
We use the following notations for the inner products between two complex-valued functions in $L^2$, $\dot H^1$ and $\dot H^1_a$:
\begin{align}\label{dotproduct}
\langle f,g\rangle_{L^2}=\Re\int f\bar g, \ \langle f,g\rangle_{\dot H^1}=\langle \nabla f,\nabla g\rangle_{L^2}, \ 
    \langle f,g\rangle_{\dot H_a^1}:=\langle \la f,g\rangle_{L^2}.
\end{align}

For $1<r<\infty$, we denote by $\dot H_a^{1,r}$ and $H_a^{1,r}$ the Sobolev spaces equipped with the norms:
\begin{align*}
\|f\|_{\dot H_a^{1,r}(\R^d)}=\|\sqrt{\la}f\|_{L^r(\R^d)},\,\, \|f\|_{ H_a^{1,r}(\R^d)}=\|\sqrt{1+\la}f\|_{L^r(\R^d)}.
\end{align*}

For a spacetime slab $I\times\R^d$, the norm in the spacetime Lebesgue space $L_t^q L_x^r(I\times\R^d)$ is defined by
\[
\|u\|_{L_t^{q}L_x^r(I\times\R^d)}:=\bigg(\int_I \|u(t)\|_{L_x^r(\R^d)}^q\,dt\bigg)^{1/q},
\]
with the usual modifications if $q$ or $r$ is infinity. When $q=r$, we abbreviate $L_t^qL_x^q=L_{t,x}^q$. Throughout this paper, we denote $2^*:=\frac{2d}{d-2}$ and abbreviate $\|f\|_{L^{\frac{2d}{d-2}}}$ by $\|f\|_{2^*}$. 

We will also use frequently the following spaces 
\begin{gather*}
    S(I):=L_{t,x}^{\frac{2(d+2)}{d-2}}(I\times\R^d),\\
    S^0(I)=L_t^2L_x^{2^*}(I\times\R^d)\cap L_t^\infty L_x^2(I\times\R^d),\\
    N^0(I)=L_t^1L_x^2(I\times \R^d)+ L_t^2L_x^{\frac{2d}{d+2}}(I\times\R^d),\\
    \dot{X}^{1}(I)=L_{t}^{\frac{2(d+2)}{d-2}}\dot{H}^{1,\frac{2d(d+2)}{d^{2}+4}}(I\times \mathbb{R}^{d}), \\
    \dot S^1(I)=\{f: \nabla f\in S^0(I)\},\quad  \dot N^1(I)=\{f: \nabla f\in N^0(I)\}.
\end{gather*}

We record the full range linear Strichartz estimate for Schr\"odinger operator $e^{it\la}$.
\begin{lemma}[Strichartz estimates,\cite{BPSTZ03, KT98, ZZ20}] Let $d\ge 3$  and $a>-\big(\frac{d-2}{2}\big)^2$. 
Then the solution $u:I\times\R^d\to\Bbb C$ to 
\begin{align*}
    (i\partial_t-\la)u=F(t,x)
\end{align*}
satisfies
\begin{align*}
    \|u\|_{S^0(I)}\lesssim\|u(t_0)\|_{L^2}+\|F\|_{N^0(I)},
\end{align*}
where $t_0\in I$.
\end{lemma}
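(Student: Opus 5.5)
The statement is the full-range Strichartz estimate for the propagator $e^{-it\la}$, which the paper cites from \cite{BPSTZ03, KT98, ZZ20}. The plan is to reduce everything to the abstract Keel--Tao machinery \cite{KT98}. That framework needs two inputs for the family $U(t)=e^{-it\la}$:
\begin{itemize}
\item the energy estimate $\|U(t)f\|_{L^2}\le\|f\|_{L^2}$;
\item the untruncated decay estimate $\|U(t)U(s)^*f\|_{L^\infty}\lesssim |t-s|^{-d/2}\|f\|_{L^1}$.
\end{itemize}
The energy estimate is immediate: for $a>-\bigl(\tfrac{d-2}{2}\bigr)^2$ the sharp Hardy inequality makes $\la$ a non-negative self-adjoint operator (Friedrichs extension), so $U(t)$ is unitary on $L^2$.

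Given both inputs, Keel--Tao yields all admissible pairs, including the endpoint $L_t^2L_x^{2^*}$, since $d\ge3$. Their $TT^*$ and Christ--Kiselev-free argument also gives the inhomogeneous estimate with dual admissible norms. Combining the homogeneous and inhomogeneous bounds through Duhamel's formula from $t_0$ gives $\|u\|_{S^0(I)}\lesssim\|u(t_0)\|_{L^2}+\|F\|_{N^0(I)}$, with $N^0$ the sum of the two dual endpoint spaces.

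The main obstacle is the dispersive estimate, which genuinely fails in $L^1\to L^\infty$ when $a<0$, because the kernel then has singular behavior $|x|^{-\sigma}$ at the origin. So I would not go through pointwise decay directly.

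\begin{itemize}
\item \emph{Route of \cite{BPSTZ03}.} First prove the weighted local smoothing (Morawetz/Kato smoothing) estimate
\[
\int_{\R}\bigl\||x|^{-1}e^{-it\la}f\bigr\|_{L^2}^2\,dt\lesssim\|f\|_{L^2}^2.
\]
This follows from a virial identity with weight $|x|$, or from resolvent bounds via Kato's theory, using the Hardy inequality to absorb the potential. Then treat $\frac{a}{|x|^2}$ as a perturbation of $-\Delta$ in Duhamel form. Free Strichartz estimates plus a dual free smoothing estimate (Kato--Yajima/Christ--Kiselev type) give all non-endpoint pairs.
\item \emph{Endpoint, route of \cite{ZZ20}.} Decompose into spherical harmonics. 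On each harmonic $\la$ becomes a Bessel operator of order $\nu_k=\sqrt{(k+\tfrac{d-2}{2})^2+a}$. Write the kernel via the Hankel transform, and establish the endpoint using uniform-in-$k$ bounds on Bessel functions together with a Littlewood--Paley/square-function argument adapted to $\la$ (the heat kernel of $\la$ has Gaussian bounds for $a\ge0$).
\end{itemize}

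I would cite these steps rather than reprove them; the uniform Bessel kernel bounds are the technically hardest point.
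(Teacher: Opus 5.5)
\textbf{Gap in the roadmap.} The paper gives no argument beyond citing \cite{BPSTZ03, KT98, ZZ20}, so deferring to the literature is consistent with it. However, the roadmap you lay out does not cover the statement as written. Since $N^0$ contains $L^2_tL^{\frac{2d}{d+2}}_x$ and $S^0\subset L^2_tL^{2^*}_x$, the lemma includes the double-endpoint bound $\bigl\|\int_{t_0}^t e^{-i(t-s)\la}F(s)\,ds\bigr\|_{L^2_tL^{2^*}_x}\lesssim\|F\|_{L^2_tL^{\frac{2d}{d+2}}_x}$.
\begin{itemize}
\item Your final ``combine through Duhamel'' step takes the inhomogeneous estimates from the Keel--Tao argument. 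That argument needs the $L^1\to L^\infty$ decay, which you then (correctly) discard for $a<0$.
\item The homogeneous bounds from your other two routes cannot be upgraded to the double endpoint by Christ--Kiselev, since both time exponents equal $2$.
\item Your endpoint route rests on Gaussian heat-kernel bounds, which you yourself restrict to $a\ge 0$. So for $-\frac{(d-2)^2}{4}<a<0$, exactly the range where you abandoned dispersion, even the homogeneous endpoint $L^2_tL^{2^*}_x$ is unaccounted for.
\end{itemize}
A smaller point: the smoothing estimate does not come from a virial identity with weight $|x|$, which works at the $\dot H^{1/2}$ level. It is an exact computation. On the $k$-th spherical-harmonic sector, the Hankel transform and Plancherel in $t$ give $\||x|^{-1}e^{-it\la}f\|_{L^2_{t,x}}^2=\frac{\pi}{2\nu_k}\|f\|_{L^2}^2$, because $\int_0^\infty J_\nu(r)^2\frac{dr}{r}=\frac{1}{2\nu}$. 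This is where $\nu_0>0$, i.e.\ $a>-\frac{(d-2)^2}{4}$, enters.

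\textbf{How to close it.} Run the perturbative argument you attribute to \cite{BPSTZ03} at the endpoint in Lorentz spaces, instead of through Christ--Kiselev.
\begin{itemize}
\item The endpoint estimates of \cite{KT98} for $e^{it\Delta}$, homogeneous and retarded, hold in $L^2_tL^{2^*,2}_x$ with dual space $L^2_tL^{\frac{2d}{d+2},2}_x$.
\item Since $|x|^{-1}\in L^{d,\infty}$, H\"older in Lorentz spaces gives $\||x|^{-1}g\|_{L^{\frac{2d}{d+2},2}}\lesssim\|g\|_{L^2}$ and $\||x|^{-1}g\|_{L^2}\lesssim\|g\|_{L^{2^*,2}}$.
\end{itemize}
Expanding $e^{-it\la}f$ by Duhamel around $e^{it\Delta}$ then gives, for every $a>-\frac{(d-2)^2}{4}$,
\[
\|e^{-it\la}f\|_{L^2_tL^{2^*}_x}\lesssim\|f\|_{L^2}+|a|\,\bigl\||x|^{-1}e^{-it\la}f\bigr\|_{L^2_{t,x}}\lesssim\|f\|_{L^2}.
\]
This needs no Bessel kernel bounds or square functions.

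For the double endpoint, let $\Gamma$ and $\Gamma_0$ be the retarded Duhamel operators of $\la$ and $-\Delta$, and set $V=a|x|^{-2}$. Then $\Gamma=\Gamma_0-i\Gamma_0V\Gamma$ and $\Gamma=\Gamma_0-i\Gamma V\Gamma_0$.
\begin{itemize}
\item The first identity reduces $\|\Gamma F\|_{L^2_tL^{2^*,2}_x}$ to $\|F\|_{L^2_tL^{\frac{2d}{d+2},2}_x}+\||x|^{-1}\Gamma F\|_{L^2_{t,x}}$.
\item The second bounds $\||x|^{-1}\Gamma F\|_{L^2_{t,x}}$ by $\||x|^{-1}\Gamma_0F\|_{L^2_{t,x}}\lesssim\|F\|_{L^2_tL^{\frac{2d}{d+2},2}_x}$, provided you have the retarded smoothing estimate $\||x|^{-1}\Gamma(|x|^{-1}G)\|_{L^2_{t,x}}\lesssim\|G\|_{L^2_{t,x}}$.
\end{itemize}
By Plancherel in time, this retarded smoothing estimate is the uniform weighted resolvent bound $\sup_{z\notin\R}\||x|^{-1}(\la-z)^{-1}|x|^{-1}\|_{L^2\to L^2}<\infty$. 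That is the extra ingredient the double endpoint needs beyond the homogeneous estimates.
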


We recall the stability lemma for the equation \eqref{NLSa}.
\begin{lemma}[Stability result,\cite{KMVZZ17}]\label{stability}
    Let $a>0$ and $I$ be a compact time interval. Suppose that $\tilde{u}$ is a near solution to \eqref{NLSa} on $I\times\R^d$ in the sense that
    \begin{align*}
        (i\partial_t-\la)\tilde{u}=-|\tilde{u}|^\frac{4}{d-2}\tilde u+e
    \end{align*}
     for some function $e$ and that it satisfies the bound:
         \begin{align*}
         \|\tilde{u}\|_{L_t^\infty \dot H_a^1(I\times\R^d)}\leq E<\infty,\,\,\|\tilde{u}\|_{S(I)}\leq L<\infty. 
     \end{align*}
     Let $t_0\in I$, $u_0\in\dot H_a^1(\R^d)$, and assume the smallness condition
     \begin{align*}
         \|u_0-\tilde{u}(t_0)\|_{\dot H_a^1}+\|\la^\frac{1}{2}e\|_{N^0(I)}\leq \varepsilon
     \end{align*}
      for some $0<\varepsilon<\varepsilon_1:=\varepsilon_1(E,L)$. 
      
      Then, there exists a unique solution to \eqref{NLSa} with initial data $u_0$ at $t=t_0$ satisfying
      \begin{gather*}
\big\|\la^\frac12u\big\|_{S^0(I)}\leq C(E,L),\\
\big\|\la^\frac12(u-\tilde{u})\big\|_{S^0(I)}\leq C(E,L)\varepsilon.
      \end{gather*}
\end{lemma}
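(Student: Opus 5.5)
The plan is the standard perturbative argument of Tao--Visan and Killip--Visan, transplanted to $\mathcal L_a$. The only genuinely new ingredient is the equivalence of Sobolev norms
\[
\|\mathcal L_a^{\frac12} f\|_{L^r}\sim \|\nabla f\|_{L^r}.
\]
For $a>0$ I expect this (from \cite{KMVZZ17}) to hold for $1<r<d$. This range contains every exponent used below: $2$, $\frac{2d}{d+2}$, and $\frac{2d(d+2)}{d^2+4}$, the last being $<d$ for $d\ge 3$. With it, the fractional-calculus facts for $\nabla$ (chain rule, product rule) can be used freely in place of $\mathcal L_a^{1/2}$. Combined with the Strichartz estimates above, this gives
\[
\|\mathcal L_a^{\frac12}u\|_{S^0(I)}\lesssim \|u(t_0)\|_{\dot H^1_a}+\|\mathcal L_a^{\frac12}F\|_{N^0(I)},
\]
and in particular a bound for $\|u\|_{\dot X^1(I)}$ and, by Sobolev embedding, for $\|u\|_{S(I)}$.

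\textbf{Step 1 (control of $\dot X^1$ by $S$).} First I upgrade the hypothesis $\|\tilde u\|_{S(I)}\le L$ to $\|\tilde u\|_{\dot X^1(I)}\le C(E,L)$.
\begin{itemize}
\item Split $I$ into $O_L(1)$ subintervals on which $\|\tilde u\|_{S}\le\eta$.
\item On each, apply Strichartz to the equation for $\tilde u$. Use $\|\nabla(|\tilde u|^{\frac4{d-2}}\tilde u)\|_{L^2_tL^{\frac{2d}{d+2}}_x}\lesssim \|\tilde u\|_S^{\frac4{d-2}}\|\tilde u\|_{\dot X^1}$, and add the error term.
\item Close by continuity in the endpoint of the subinterval.
\end{itemize}

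\textbf{Step 2 (short-time perturbation).} Now assume in addition that $\|\tilde u\|_{\dot X^1(I)}\le\delta$, with $\delta=\delta(E)$ small. Set $w=u-\tilde u$, which solves
\[
(i\partial_t-\mathcal L_a)w=-\big(F(\tilde u+w)-F(\tilde u)\big)-e,\qquad F(z)=|z|^{\frac4{d-2}}z.
\]
Since $d\in\{4,5,6\}$, the exponent $p=\frac4{d-2}$ is at least $1$. Hence $F'$ is Hölder continuous of order $\min(1,p)=1$ in the sense $|F'(z_1)-F'(z_2)|\lesssim|z_1-z_2|(|z_1|+|z_2|)^{p-1}$, which gives the pointwise bound
\[
|\nabla(F(\tilde u+w)-F(\tilde u))|\lesssim |\nabla w|\,(|\tilde u|+|w|)^{p}+|\nabla\tilde u|\,|w|\,(|\tilde u|+|w|)^{p-1}.
\]
Hölder's inequality in $L^2_tL^{\frac{2d}{d+2}}_x$, with Sobolev embedding $\dot X^1\hookrightarrow S$, then yields
\[
\|w\|_{\dot X^1}\lesssim \varepsilon+(\delta+\|w\|_{\dot X^1})^{p}\|w\|_{\dot X^1}+\|w\|_{\dot X^1}^{p+1}.
\]
A continuity argument gives $\|w\|_{\dot X^1}\lesssim\varepsilon$, and then the full bound $\|\mathcal L_a^{1/2}w\|_{S^0}\lesssim\varepsilon$. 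Existence and uniqueness of $u$ on $I$ follow from the local theory together with this a priori bound, which prevents blow-up of the $S$-norm.

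\textbf{Step 3 (iteration).} Using Step 1, partition $I$ into $N=N(E,L)$ intervals $J_k$ with $\|\tilde u\|_{\dot X^1(J_k)}\le\delta$.
\begin{itemize}
\item Apply Step 2 successively on each $J_k$. The data error at the left endpoint of $J_{k+1}$ is controlled by Strichartz on $[t_0,t_{k+1}]$, giving $\varepsilon_{k+1}\le C\varepsilon_k$.
\item Hence $\varepsilon_k\le C^k\varepsilon$.
\item Choose $\varepsilon_1=\varepsilon_1(E,L)$ so small that $C^N\varepsilon_1$ stays below the threshold of Step 2.
\item Summing over $k$ gives $\|\mathcal L_a^{1/2}(u-\tilde u)\|_{S^0(I)}\le C(E,L)\varepsilon$.
\item The bound on $\|\mathcal L_a^{1/2}u\|_{S^0(I)}$ then follows from the triangle inequality.
\end{itemize}

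\textbf{Main obstacle.} Everything except the norm equivalence is routine. That equivalence is where the operator $\mathcal L_a$ actually enters, and it is what makes the chain-rule estimates legitimate with $\mathcal L_a^{1/2}$ in place of $\nabla$. I would rely on the range stated in \cite{KMVZZ17} and check that each Lebesgue exponent used lies inside it; for $a>0$ this appears comfortable. The restriction $d\le 6$ conveniently avoids the exotic-Strichartz machinery that would otherwise be needed when $p<1$.
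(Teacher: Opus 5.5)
Your proposal is correct. It is the standard Tao--Visan/Killip--Visan perturbation argument (upgrade the $S$-bound to a $\dot X^1$-bound, run the short-time perturbation in $\dot X^1$, then iterate over $O_{E,L}(1)$ subintervals), which is exactly the argument the paper imports from \cite{KMVZZ17} without reproducing it. You also correctly identify the only potential-specific inputs: the norm equivalence $\|\mathcal L_a^{1/2}f\|_{L^r}\sim\|\nabla f\|_{L^r}$ for $1<r<d$ when $a>0$, needed at $r=2,\ \frac{2d}{d+2},\ \frac{2d(d+2)}{d^2+4}$ (the $L^{2^*}$ component of $S^0$ is obtained directly from Strichartz for $\mathcal L_a^{1/2}u$), and the fact that $\frac{4}{d-2}\ge 1$ for $d\le 6$, so the difference estimate closes without exotic Strichartz estimates.
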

We now recall linear profile decompositions relative to Sobolev inequality and Strichartz estimate of $e^{-it\la}$, respectively.

\begin{lemma}[$\dot{H}^1_a$ linear profile decomposition I, \cite{YZZ22}]\label{LPDa}
 Let $\{f_n\}$ be a bounded sequence in $\dot{H}^1_a(\mathbb{R}^d)$. After passing to a subsequence, there exist $J^* \in \{0, 1, 2, \ldots\} \cup \{\infty\}$, $\{\phi^j\}_{j=1}^{J^*} \subset \dot{H}^1_a(\mathbb{R}^d)$, $\{(\lambda_n^j, x_n^j)\}_{j=1}^{J^*} \subset \mathbb{R}^+ \times \mathbb{R}^d$ such that for every $0 \leq J \leq J^*$, we have the decomposition
\[
f_n = \sum_{j=1}^J \phi_n^j + r_n^J, \quad \phi_n^j = \mathbf{g}_{\lambda_n^j,x_n^j}(\phi^j), \quad r_n^J \in \dot{H}^1_a(\mathbb{R}^d)
\] 
satisfying
\begin{eqnarray*}
&&\lim_{J \to J^*} \limsup_{n \to \infty} \|r_n^J\|_{2^*} = 0,\\
&&\lim_{n \to \infty} \left( \|f_n\|_{\dot{H}^1_a}^2 - \sum_{j=1}^J \|\phi_n^j\|_{\dot{H}^1_a}^2 - \|r_n^J\|_{\dot{H}^1_a}^2 \right) = 0,\\
&&\lim_{n \to \infty} \left( \|f_n\|_{2^*} ^{2^*} - \sum_{j=1}^J \|\phi_n^j\|_{2^*} ^{2^*} - \|r_n^J\|_{2^*} ^{2^*}  \right) = 0.
\end{eqnarray*}

Moreover, for all $j \neq k$, we have the asymptotic orthogonality property

\[
\lim_{n \to \infty} \left( \left| \frac{\lambda_n^j}{\lambda_n^k} \right| + \left| \frac{\lambda_n^k}{\lambda_n^j} \right| + \frac{|x_n^j - x_n^k|^2}{\lambda_n^j \lambda_n^k} \right) = \infty.
\]

Finally, we may also assume for each $j$, either $|x_n^j|/\lambda_n^j \to \infty$ or $x_n^j \equiv 0$ and
\[
\|\phi_n^j\|_{\dot{H}^1_a} \to \|\phi^j\|_{X_j} = 
\begin{cases} 
\|\phi^j\|_{\dot{H}^1} & \text{as } \frac{|x_n^j|}{\lambda_n^j} \to \infty, \\
\|\phi^j\|_{\dot{H}^1_a} & \text{as } x_n^j \equiv 0.
\end{cases} \tag{A.7}
\]
\end{lemma}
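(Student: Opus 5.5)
The plan is to derive this from the classical $\dot H^1$ profile decomposition of Keraani type together with a comparison between $\|\cdot\|_{\dot H^1_a}$ and $\|\cdot\|_{\dot H^1}$ under translation. First, since $\|f\|_{\dot H^1}\lesssim \|f\|_{\dot H^1_a}$ by the norm equivalence, $\{f_n\}$ is bounded in $\dot H^1$. We apply the standard $\dot H^1$ profile decomposition (refinement of Sobolev via Besov/Lorentz improvements, then iterated extraction of weak limits of $\mathbf g^{-1}_{\lambda_n^j,x_n^j}r_n^{j-1}$). This produces profiles $\phi^j\in\dot H^1$ and parameters $(\lambda_n^j,x_n^j)$ with the asymptotic orthogonality condition, the vanishing $\lim_J\limsup_n\|r_n^J\|_{2^*}=0$, and the $L^{2^*}$ decoupling via Brezis--Lieb together with orthogonality. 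None of these properties refer to $a$. Passing to a further subsequence, for each $j$ either $|x_n^j|/\lambda_n^j\to\infty$ or $x_n^j/\lambda_n^j\to y^j$. In the second case we replace $x_n^j$ by $0$ and $\phi^j$ by $\phi^j(\cdot-y^j)$; the error $\mathbf g_{\lambda_n^j,x_n^j}\phi^j-\mathbf g_{\lambda_n^j,0}\phi^j(\cdot-y^j)\to0$ in $\dot H^1$ is absorbed into $r_n^J$. This does not affect orthogonality or the vanishing of the remainder.

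The remaining work is the $\dot H^1_a$ Pythagorean expansion and the limit (A.7). We expand
\[
\|f_n\|_{\dot H^1_a}^2=\sum_j\|\phi_n^j\|_{\dot H^1_a}^2+\|r_n^J\|_{\dot H^1_a}^2+2\sum_{j\ne k}\langle\phi_n^j,\phi_n^k\rangle_{\dot H^1_a}+2\sum_j\langle\phi_n^j,r_n^J\rangle_{\dot H^1_a}.
\]
The cross terms between profiles will be handled by approximating $\phi^j,\phi^k$ in $\dot H^1$ by $C_c^\infty(\R^d\setminus\{0\})$ functions. The form $\langle \la f,g\rangle$ is bounded on $\dot H^1$ by Hardy's inequality, so the approximation is legitimate. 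For compactly supported, well-separated pieces, orthogonality of parameters makes both $\int\nabla\phi_n^j\cdot\overline{\nabla\phi_n^k}$ and $\int|x|^{-2}\phi_n^j\overline{\phi_n^k}$ tend to zero: in the scale-separated case one uses a change of variables and $L^2$-type smallness of the rescaled bump, and in the translation-separated case the supports eventually become disjoint.

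For the profile--remainder terms, we write $\langle\phi_n^j,r_n^J\rangle_{\dot H^1_a}=\langle \mathbf g^{-1}\phi_n^j,\mathbf g^{-1}r_n^J\rangle$. Here we use that $\la$ commutes with scaling, and in the case $|x_n^j|/\lambda_n^j\to\infty$ translation yields the potential $a|x+x_n^j/\lambda_n^j|^{-2}$. The construction guarantees $\mathbf g^{-1}_{\lambda_n^j,x_n^j}r_n^J\rightharpoonup0$ weakly in $\dot H^1$. When $x_n^j\equiv0$ the pairing $\langle\la\phi^j,\cdot\rangle$ is a fixed bounded functional on $\dot H^1$, so the term tends to $0$. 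When $|x_n^j|/\lambda_n^j\to\infty$, the potential part tends to zero: for $\phi^j\in C_c^\infty$, $\int a|x+x_n^j/\lambda_n^j|^{-2}|\phi^j||g_n|\lesssim (\lambda_n^j/|x_n^j|)^{2}\|\phi^j\|_{L^{2d/(d+2)}}\|g_n\|_{2^*}\to0$. The gradient part vanishes by weak convergence.

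The same computation gives (A.7). In the translated case
\[
\|\phi_n^j\|_{\dot H^1_a}^2=\|\phi^j\|_{\dot H^1}^2+\int\frac{a|\phi^j(x)|^2}{|x+x_n^j/\lambda_n^j|^2}\,dx\to\|\phi^j\|_{\dot H^1}^2,
\]
by density and Hardy's inequality. In the case $x_n^j\equiv0$ we have $\|\phi_n^j\|_{\dot H^1_a}=\|\phi^j\|_{\dot H^1_a}$ exactly by scaling invariance.

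The main obstacle is the profile--remainder cross term in $\dot H^1_a$. Weak convergence is only known in $\dot H^1$, and the translated inverse-square potential is not a fixed functional, so the decay argument in the translated case (density plus the $(\lambda/|x|)^2$ bound) has to be done carefully and uniformly in $n$. Everything else is inherited verbatim from the free-case decomposition.
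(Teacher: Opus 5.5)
Your argument is correct. The paper itself gives no proof of this lemma; it quotes it from \cite{YZZ22}. In this literature such $\dot H^1_a$ decompositions are usually produced intrinsically to $\mathcal L_a$ (compare the Strichartz-level Lemma~\ref{LPDb} from \cite{KMVZZ17}). Bubbles are extracted through a refined or inverse inequality with $\mathcal L_a$ built in. The $\dot H^1_a$ decoupling and (A.7) then come from weak convergence together with the convergence of $\mathcal L_a^n=-\Delta+a|x+y_n|^{-2}$ to $-\Delta$ when $|y_n|\to\infty$.

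Your route is genuinely different and more economical. You take G\'erard's free $\dot H^1$ decomposition wholesale, noting that the vanishing of $\|r_n^J\|_{2^*}$, the $L^{2^*}$ decoupling and the orthogonality never involve $a$. You then re-centre by hand the profiles with bounded $x_n^j/\lambda_n^j$. The only $a$-dependent conclusions follow from three facts:
\begin{itemize}
\item $\mathbf g^{-1}_{\lambda_n^j,x_n^j}r_n^J\rightharpoonup 0$ in $\dot H^1$;
\item the $\dot H^1_a$ inner product is invariant under $\mathbf g_{\lambda,0}$;
\item the translated Hardy inequality $\||x+y|^{-1}f\|_{L^2}\le \frac{2}{d-2}\|f\|_{\dot H^1}$.
\end{itemize}
Because the Hardy bound is uniform in $y$, the density step you single out as the main obstacle is routine. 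Your approach needs no $\mathcal L_a$-adapted harmonic analysis or operator-convergence lemma, and the sign of $a$ plays no role. It cannot, however, replace Lemma~\ref{LPDb}: there $e^{-it\mathcal L_a}$ does not commute with translations, so the free decomposition cannot simply be transplanted.

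Two small points should be made explicit. First, after re-centring, the weak convergence $\mathbf g^{-1}_{\lambda_n^j,0}\tilde r_n^J\rightharpoonup 0$ for the modified remainder $\tilde r_n^J$ still holds. Translation by the convergent vectors $x_n^j/\lambda_n^j$ preserves bounded weakly null sequences, and the absorbed errors are strongly null. Second, in your expansion of $\|f_n\|_{\dot H^1_a}^2$ the profile--profile cross terms should read $2\sum_{j<k}$ rather than $2\sum_{j\neq k}$.
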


\begin{lemma}[$\dot{H}^1_a$ linear profile decomposition II, \cite{KMVZZ17}]\label{LPDb}
 Let $\{f_n\}$ be a bounded sequence in $\dot{H}^1_a(\mathbb{R}^d)$. After passing to a subsequence, there exist $J^* \in \{0,1,2,\ldots,\infty\}$, $\{\phi^j\}_{j=1}^{J^*} \subset \dot{H}^1_a$, $\{\lambda_n^j\}_{j=1}^{J^*} \subset (0,\infty)$, $\{x_n^j\}_{j=1}^{J^*} \subset \mathbb{R}^d$ and $\{t_n^j\}_{j=1}^{J^*} \subset \mathbb{R}$ such that for each finite $0 \leq J \leq J^*$, we have the decomposition
\[
f_n = \sum_{j=1}^J \phi_n^j + w_n^J \quad \text{with} \quad \phi_n^j = \mathbf{g}_{\lambda_n^j,x_n^j}(e^{-it_n^j \mathcal{L}_a^{n^j}} \phi^j) \quad \text{and} \quad w_n^J \in \dot{H}_a^1
\]
satisfying
\begin{gather}
\lim_{J \to J^*} \limsup_{n \to \infty} \|e^{-it \mathcal{L}_a} w_n^J\|_{S(\mathbb{R})} = 0,\label{remainder}\\
\lim_{n \to \infty} \left\{ \|f_n\|_{\dot{H}_a^1}^2 - \sum_{j=1}^J \|\phi_n^j\|_{\dot{H}_a^1}^2 - \|w_n^J\|_{\dot{H}_a^1}^2 \right\} = 0,\label{orthogonal-1}\\
\lim_{n \to \infty} \left\{ \|f_n\|_{2^*}^{2^*} - \sum_{j=1}^J \|\phi_n^j\|_{2^*}^{2^*} - \|w_n^J\|_{2^*}^{2^*} \right\} = 0.\label{orthogonal-2}
\end{gather}

Here $\mathcal{L}_a^{n^j} = -\Delta + \frac{a}{|x + y_n^j|^2}$ with $y_n^j = \frac{x_n^j}{\lambda_n^j}$. Moreover, for all $j \neq k$, we have the asymptotic orthogonality property
\[
\lim_{n\to\infty}\; \left| \frac{\lambda_n^j}{\lambda_n^k} \right| + \left| \frac{\lambda_n^k}{\lambda_n^j} \right| + \frac{|t_n^j (\lambda_n^j)^2 - t_n^k (\lambda_n^k)^2|}{\lambda_n^j \lambda_n^k} + \frac{|x_n^j - x_n^k|^2}{\lambda_n^j \lambda_n^k} = \infty .
\]
Furthermore, we may assume that for each $j$: either $t_n^j \equiv 0$ or $t_n^j \to \pm \infty$; either $\frac{|x_n^j|}{\lambda_n^j} \to \infty$ or $x_n^j \equiv 0$.
\end{lemma}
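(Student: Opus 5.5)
The plan is the standard concentration-compactness scheme: first an inverse Strichartz inequality that extracts one bubble, then iteration of that extraction. The new feature relative to the free case is that a profile may concentrate away from the origin, which forces the translated operators $\mathcal L_a^{n^j}$ into the statement.

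\emph{Step 1: refined Strichartz estimate.} Using the Littlewood--Paley theory adapted to $\mathcal L_a$ (Bernstein and square-function estimates for the spectral projections $P_N^a$, valid for $a>0$ in a range of exponents that covers the Strichartz pair in $d\ge3$), I would prove
\[
\|e^{-it\mathcal L_a}f\|_{S(\mathbb R)}\lesssim \|f\|_{\dot H^1_a}^{1-\theta}\sup_N\|e^{-it\mathcal L_a}P_N^af\|_{S(\mathbb R)}^{\theta}
\]
for some $\theta\in(0,1)$. The proof interpolates the Strichartz estimate against a square-function bound.

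\emph{Step 2: inverse Strichartz.} Suppose $\|f_n\|_{\dot H^1_a}\le A$ and $\|e^{-it\mathcal L_a}f_n\|_S\ge\varepsilon$. Step 1 gives a frequency $N_n$ on which the piece is large. H\"older, Bernstein and local smoothing for $P^a_{N_n}$ then give a point $(t_n,x_n)$ with
\[
N_n^{-\frac{d-2}2}\bigl|(e^{-it_n\mathcal L_a}P^a_{N_n}f_n)(x_n)\bigr|\gtrsim\varepsilon^{c}A^{-c'}.
\]
Set $\lambda_n=N_n^{-1}$. Define the rescaled data $h_n=\mathbf g_{\lambda_n,x_n}^{-1}e^{-it_n\mathcal L_a}f_n$ and let $\phi$ be its weak limit, after relabelling time so that $\mathbf g_{\lambda_n,x_n}e^{-it_n\mathcal L_a^{n}}\phi$ is the corresponding profile. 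Conjugating by $\mathbf g_{\lambda_n,x_n}$ turns $\mathcal L_a$ into $\mathcal L_a^{n}=-\Delta+a|x+y_n|^{-2}$ with $y_n=x_n/\lambda_n$. After passing to a subsequence, either $|y_n|\to\infty$, or $y_n$ is bounded, in which case I absorb it and set $x_n\equiv0$. Similarly, either $t_n$ is bounded, in which case I absorb it and set $t_n\equiv0$, or $t_n\to\pm\infty$.

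The key convergence fact is strong resolvent convergence, $\mathcal L_a^n\to\mathcal L_a$ if $y_n\equiv0$ and $\mathcal L_a^n\to-\Delta$ if $|y_n|\to\infty$. This yields strong convergence of $e^{-it\mathcal L_a^n}$ and of $P_N^{a,n}$ (via the Stone--Weierstrass/Helffer--Sj\"ostrand argument of Killip--Miao--Visan--Zhang--Zheng). With it, the pointwise lower bound passes to the weak limit and gives $\|\phi\|_{\dot H^1_a}\gtrsim A(\varepsilon/A)^{C}$. Weak convergence in the Hilbert space $\dot H^1_a$ gives decoupling of the $\dot H^1_a$ norms. Decoupling of the $L^{2^*}$ norms follows from a.e. convergence via Rellich compactness plus the refined Fatou (Br\'ezis--Lieb) lemma. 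In the case $t_n\to\pm\infty$, the profile's $L^{2^*}$ norm vanishes by dispersion, which is consistent with \eqref{orthogonal-2}.

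\emph{Step 3: iteration.} Apply Step 2 to the remainder $w_n^J$. Energy decoupling \eqref{orthogonal-1} forces $\sum_j\|\phi^j\|^2\le A^2$, so the lower bound on each new profile forces $\limsup_n\|e^{-it\mathcal L_a}w_n^J\|_S\to0$, which is \eqref{remainder}. Asymptotic orthogonality of the parameters is proved by contradiction. If two parameter sequences were not orthogonal, then $\mathbf g^{-1}e^{-it\mathcal L_a}$ applied to $w_n^{k-1}$ along frame $j$ would converge weakly to zero (by construction of $w_n^j$). The parameters would then converge to a nondegenerate limiting transformation, and the strong operator convergence from Step 2 would force $\phi^k=0$, a contradiction.

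I expect the main obstacle to be Step 2 in the regime $|x_n|/\lambda_n\to\infty$ with $t_n\to\pm\infty$. There one must control $e^{-it\mathcal L_a^n}$ uniformly against the free flow for the limiting linear evolution. The dispersive and Strichartz estimates, as well as the Littlewood--Paley square function, must also hold uniformly in the translated potential. This is where the restriction on $a$, and the full-range Strichartz estimates of the preceding lemma, are used.
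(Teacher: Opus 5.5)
Your outline is correct and is essentially the argument of \cite{KMVZZ17}, which the paper quotes without giving a proof of its own. That argument has three steps: a refined Strichartz estimate via the $\mathcal L_a$-adapted Littlewood--Paley theory; inverse Strichartz, with the case $|x_n|/\lambda_n\to\infty$ (as opposed to $x_n\equiv0$) handled by strong resolvent convergence $\mathcal L_a^n\to-\Delta$ and the resulting convergence of propagators and Littlewood--Paley projections; and iteration, with orthogonality of parameters proved by contradiction.

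The only inaccuracies are in your last paragraph and are minor:
\begin{enumerate}
\item The uniformity of Strichartz, dispersive and square-function estimates in the translated potential is automatic, because translation by $y_n$ conjugates $\mathcal L_a^n$ to $\mathcal L_a$.
\item No restriction on $a>0$ enters at that point.
\item Local smoothing is not needed to locate the concentration point; H\"older between $L^{\frac{2(d+2)}{d}}_{t,x}$ and $L^\infty_{t,x}$ together with Bernstein suffices.
\end{enumerate}
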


\medskip

Using the sharp Sobolev inequality, we  show the positivity of $E_a(u)$ as well as the coercivity of kinetic energy on the energy level surface.

\begin{lemma}[Coercivity of energy]\label{energy-control}
Assume $u_0\neq 0$ satisfies $\|u_0\|_{\dot H^1_a}\le \|W\|_{\dot H^1}$. Then 
\begin{align}\label{Ec1}
 \  \frac{\|u_0\|_{\dot H_a^1}^2}{\|W\|_{\dot H^1}^2}< \frac{E_a(u_0)}{E_0(W)}.
\end{align}
If in addition $E_a(u_0)=E_0(W)$, then the solution $u(t)$ to \eqref{NLSa} with initial data $u_0$ satisfies 
\begin{align}\label{Ec2}
\|u(t)\|_{\dot H^1_a}<\|W\|_{\dot H^1},
\end{align}
for all time $t$ of existence. 
\end{lemma}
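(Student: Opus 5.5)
The plan is to reduce everything to a one-variable inequality, using the strict sharp Sobolev inequality \eqref{ineq:Sob+} of Proposition \ref{P:Sob}(ii). Set $y=\|u_0\|_{\dot H^1_a}^2/\|W\|_{\dot H^1}^2\in(0,1]$. Write $C_W:=\|W\|_{2^*}/\|W\|_{\dot H^1}$ for the sharp constant. The Euler--Lagrange equation $-\Delta W=|W|^{\frac4{d-2}}W$ gives $\|W\|_{2^*}^{2^*}=\|W\|_{\dot H^1}^2$. Hence
\[
E_0(W)=\tfrac1d\|W\|_{\dot H^1}^2,
\]
and $C_W^{2^*}=\|W\|_{\dot H^1}^{2-2^*}$.

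Since $a>0$ and $u_0\neq0$, \eqref{ineq:Sob+} gives the strict bound $\|u_0\|_{2^*}^{2^*}<C_W^{2^*}\|u_0\|_{\dot H^1_a}^{2^*}=\|W\|_{\dot H^1}^2\,y^{\frac d{d-2}}$. Plugging this into the energy yields
\[
E_a(u_0)>\tfrac12\|W\|_{\dot H^1}^2\,y-\tfrac{d-2}{2d}\|W\|_{\dot H^1}^2\,y^{\frac d{d-2}}.
\]
Dividing by $E_0(W)$, it suffices to show $\frac d2 y-\frac{d-2}{2}y^{\frac d{d-2}}\ge y$ on $(0,1]$. This is equivalent to $\frac{d-2}{2}\bigl(1-y^{\frac 2{d-2}}\bigr)\ge0$, which holds since $y\le1$. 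Together with the strict inequality above, this gives \eqref{Ec1}. The only delicate point is that strictness comes from Proposition \ref{P:Sob}(ii), not from the algebra, which is an equality at $y=1$.

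For \eqref{Ec2}, use a continuity argument along the flow. The map $t\mapsto\|u(t)\|_{\dot H^1_a}$ is continuous, since $u\in C_t\dot H^1_a$, and energy is conserved, so $E_a(u(t))=E_0(W)$. Suppose the conclusion fails at some time. Then, since $\|u_0\|_{\dot H^1_a}<\|W\|_{\dot H^1}$, continuity gives a first time $t_1$ with $\|u(t_1)\|_{\dot H^1_a}=\|W\|_{\dot H^1}$. Note $u(t_1)\neq0$, because its norm equals $\|W\|_{\dot H^1}$. So \eqref{Ec1} applies at $t_1$ and gives $1<E_a(u(t_1))/E_0(W)=1$, a contradiction. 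The same argument works backward in time. The proof should also note that the hypothesis forces $u_0\neq0$, since $E_a(0)=0\neq E_0(W)$.
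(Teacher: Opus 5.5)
Your proposal is correct and follows essentially the same route as the paper: for \eqref{Ec1} it uses the strict Sobolev inequality \eqref{ineq:Sob+} together with the elementary bound $y^{2/(d-2)}\le 1$, and for \eqref{Ec2} a continuity (first-hitting-time) argument combined with energy conservation. One small point: $\|u_0\|_{\dot H^1_a}<\|W\|_{\dot H^1}$ is not a hypothesis of the lemma, but it follows from \eqref{Ec1} at $t=0$, as the paper notes, and your first-time argument already covers the case $t_1=0$ anyway.
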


\begin{proof} From sharp Sobolev embedding \eqref{ineq:Sob+}, we obtain
\begin{align*}
E_a(u_0)
&=\frac12 {\|u_0\|_{\dot H_a^1}^2}-\frac 1{2^*}{\|u_0\|_{2^*}^{2^*}}
>\frac 12 {\|u_0\|_{\dot H_a^1}^2- \frac 1{2^*}\bigl(\tfrac{\|W\|_{2^*}}{\|W \|_{\dot H^{1}}}\|u_0\|_{\dot{H}_a^1}\bigr)^{2^*}}\\
& = \tfrac{\|u_0\|_{\dot H_a^1}^2}{\|W\|_{\dot H^1}^2}\bigl(\frac 12\|W\|_{\dot H^1}^2 - \frac 1{2^*}\|W\|_{2^*}^{2^*}\bigl(\tfrac{\|u_0\|_{\dot H_a^1}}{\|W \|_{\dot H^1}}\bigr)^{2^*-2}\bigr) \ge \tfrac{\|u_0\|_{\dot H_a^1}^2}{\|W\|_{\dot H^1}^2}E_0(W)>0.
\end{align*}
\eqref{Ec1} is proved, hence the positivity of $E_a(u_0)$. 

If in addition $E_a(u_0)=E_0(W)$, \eqref{Ec1} immediately implies that $\|u_0\|_{\dot H^1_a}<\|W\|_{\dot H^1}$. The estimate for all time $t$ of existence simply follows from the fact that the set $\{t: \|u(t)\|_{\dot H_a^1}<\|W\|_{\dot H^1}\}$ is both open and closed in view of \eqref{Ec1} and the continuous dependence on the norm.
\end{proof}

\medskip

We recall a result from \cite{Y20} on the construction of nonlinear profiles when $\frac{|x_n|}{\lambda_n}\to \infty$, which will be used in \textit{type (ii) nonlinear profile} in the proof of Theorem \ref{compact}.
\begin{lemma}[\cite{Y20}]\label{ENP}
Let $d=4,5,6$ and let $(t_n,\lambda_n,x_n)\in \mathbb R\times \mathbb R^+\times \mathbb R^d$ satisfying 
\[
t_n\equiv 0 \mbox{ or } {t_n\to \pm \infty}, \ \frac{|x_n|}{\lambda_n}\to \infty \mbox{ as } n\to \infty. 
\]
 Let $\phi \in \dot{H}^1(\mathbb{R}^d)$ satisfy \[
\begin{cases}
E_0(\phi) < E_0(W), \, \|\phi\|_{\dot{H}^1} < \|W\|_{\dot{H}^1}, & \text{if } t_n \equiv 0, \\
\frac{1}{2}\|\phi\|_{\dot{H}^1}^2 < E_0(W), \, \|\phi\|_{\dot{H}^1} < \|W\|_{\dot{H}^1}, & \text{if } t_n \to \pm\infty.
\end{cases}
\]
Denote
\[
\phi_n(x) =\mathbf{g}_{\lambda_n,x_n}(e^{-it_n \mathcal{L}_a^n} \phi),
\]
where $\mathcal{L}_a^n=-\Delta+\frac{a}{|x+y_n|^2}$ and $y_n = x_n \lambda_n^{-1}$. 
Then for sufficiently large $n$, there exists a global solution $v_n$ to \eqref{NLSa} with $v_n(0) = \phi_n$ such that 
\[
\|v_n\|_{S(\mathbb{R})} \lesssim 1.
\]
Furthermore, given $\varepsilon >0$, $\exists $ $N\in \mathbb{N}$ and $\psi _{\varepsilon
}\in C_{c}^{\infty }(\mathbb{R}\times \mathbb{R}^{d})$ such that for all $%
n\geq N$, 
\begin{equation}
\Vert v_{n}(t,x)-\mathbf{T}_{\lambda_n,x_n,t_n}(\psi_{\epsilon})\Vert _{\dot{X}^{1}(\mathbb{R})}<\varepsilon.  \label{enp2}
\end{equation}
\end{lemma}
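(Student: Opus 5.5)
The plan is to compare $v_n$ with a rescaled, translated solution of the free energy-critical NLS ($a=0$). After translating by $x_n$ and rescaling by $\lambda_n$, the potential becomes $\frac{a}{|x+y_n|^2}$ with $|y_n|\to\infty$, so on any fixed compact set it is $O(|y_n|^{-2})$. The nonlinear profile should therefore be governed by the free flow.

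\textbf{Step 1: the free nonlinear profile.} Let $w$ be the maximal solution of $(i\partial_t+\Delta)w=-|w|^{\frac4{d-2}}w$ chosen as follows.
\begin{itemize}
\item If $t_n\equiv 0$, take $w(0)=\phi$. Since $E_0(\phi)<E_0(W)$ and $\|\phi\|_{\dot H^1}<\|W\|_{\dot H^1}$, Theorem \ref{scatter} with $a=0$ and $d\in\{4,5,6\}$ shows that $w$ is global with $\|w\|_{S(\mathbb R)}<\infty$.
\item If $t_n\to\pm\infty$, take $w$ to be the solution scattering to $e^{it\Delta}\phi$ as $t\to\pm\infty$, built by the standard wave-operator argument. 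Then $E_0(w)=\frac12\|\phi\|_{\dot H^1}^2<E_0(W)$. Moreover, $\|w(t)\|_{\dot H^1}\to\|\phi\|_{\dot H^1}<\|W\|_{\dot H^1}$ along the scattering direction. The energy-trapping argument of Lemma \ref{energy-control} with $a=0$ then keeps $\|w(t)\|_{\dot H^1}<\|W\|_{\dot H^1}$ for all $t$, so Theorem \ref{scatter} again gives global existence and a finite $S(\mathbb R)$ norm.
\end{itemize}

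\textbf{Step 2: the approximate solution.} By density, fix $\psi_\varepsilon\in C_c^\infty(\mathbb R\times\mathbb R^d)$ approximating $w$ in $\dot X^1([-T,T])$, with $T$ large. Using scattering, the pieces of $w$ outside $[-T,T]$ are $\varepsilon$-close to free linear evolutions of $w_\pm\in C_c^\infty$.

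Let $\chi_n$ be a smooth cutoff that vanishes near $-y_n$ and equals $1$ on $\{|x+y_n|\ge \frac12|y_n|\}$. The near solution is
\[
\tilde v_n(t)=\mathbf g_{\lambda_n,x_n}\Big[\chi_n\, w(\lambda_n^{-2}t+t_n)\Big] \quad\text{for } |\lambda_n^{-2}t+t_n|\le T.
\]
Outside this window, $\tilde v_n$ is extended by the linear flow $e^{-i(t-t^\pm_n)\mathcal L_a}\tilde v_n(t^\pm_n)$ at the endpoints $t_n^\pm$ of the window.

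After undoing the symmetries, the error $e_n$ splits into three parts:
\begin{itemize}
\item the potential term $\frac{a}{|x+y_n|^2}\chi_n w$, which is $O(|y_n|^{-2})$ in $\dot N^1$ on compact supports;
\item the commutator terms from $\chi_n$, which are small because $\nabla\chi_n$ lives where $w$ has negligible mass;
\item the nonlinear term on the linear tails, which is small by the smallness of the $S$-norm of the tails.
\end{itemize}
We also need $\|\tilde v_n(0)-\phi_n\|_{\dot H^1_a}\to 0$.

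\textbf{Step 3: the main obstacle.} The hardest point is the convergence $e^{-it\mathcal L_a^n}f\to e^{it\Delta}f$ in Strichartz norms as $|y_n|\to\infty$, uniformly enough to control both the initial data $e^{-it_n\mathcal L_a^n}\phi$ (with $t_n\to\pm\infty$) and the linear tails. I would prove it for $f\in C_c^\infty$ by writing Duhamel for the difference and treating $\frac{a}{|x+y_n|^2}$ as a perturbation. Local smoothing and Hardy estimates bound its contribution, which vanishes since $\operatorname{dist}(\operatorname{supp} f,-y_n)\to\infty$. Density and the uniform Strichartz estimates then extend it to general $f$; this is the convergence-of-operators input from \cite{KMVZZ17}.

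\textbf{Step 4: conclusion.} With these bounds, $\|\tilde v_n\|_{S}\lesssim 1$ and $\|\tilde v_n\|_{L^\infty_t\dot H^1_a}\lesssim 1$ uniformly in $n$, while the errors tend to $0$. The stability Lemma \ref{stability} then produces the global solution $v_n$ with $\|v_n\|_{S(\mathbb R)}\lesssim1$ and $\|\mathcal L_a^{1/2}(v_n-\tilde v_n)\|_{S^0}\to0$. Finally, \eqref{enp2} follows by combining this with $\|\tilde v_n-\mathbf T_{\lambda_n,x_n,t_n}(\psi_\varepsilon)\|_{\dot X^1}<\varepsilon$, which comes from the construction and the smallness of the linear tails.
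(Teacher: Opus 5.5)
The paper does not prove this lemma; it quotes it from \cite{Y20}, which adapts the embedding argument of \cite{KMVZZ17}. Your scheme is that argument: a free sub-threshold profile, cut off away from the singularity at $-y_n$, glued to linear $\mathcal L_a$-evolutions outside a time window, with the Strichartz convergence $e^{-it\mathcal L_a^n}\to e^{it\Delta}$ and the stability lemma closing the argument. Step 1, Step 3, the linear tails and the matching of initial data are fine.

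The potential term is also fine without compact support, provided $\chi_n$ vanishes on a whole ball $\{|x+y_n|\le c|y_n|\}$ with $|\partial^k\chi_n|\lesssim |y_n|^{-k}$. Your phrase ``vanishes near $-y_n$'' is not enough for the $O(|y_n|^{-2})$ claim. With that choice, Hardy's inequality centered at $-y_n$ bounds the gradient of this term by $O(T|y_n|^{-2})$ in $L^1_tL^2_x$ on the window.

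The genuine gap is the commutator term in Step 2. Since $(i\partial_t+\Delta)(\chi_n w)=\chi_n(i\partial_t+\Delta)w+2\nabla\chi_n\cdot\nabla w+(\Delta\chi_n)w$, the error contains $2\nabla\chi_n\cdot\nabla w$. Lemma \ref{stability} requires $\|\mathcal L_a^{1/2}e_n\|_{N^0}$ to be small, i.e.\ essentially $\nabla e_n\in L^1_tL^2_x+L^2_tL^{\frac{2d}{d+2}}_x$. That brings in $\nabla\chi_n\cdot\nabla^2 w$. For a solution with merely $\dot H^1$ data, $\nabla^2 w$ is not a function in these spaces at all. So the smallness of the $\dot H^1$-energy of $w$ on $\operatorname{supp}\nabla\chi_n$ does not help.

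Your density step does not repair this either. A $\psi_\varepsilon\in C_c^\infty$ close to $w$ in $\dot X^1$ is not a near-solution in $\dot N^1$, so it is only useful at the very end, for \eqref{enp2}.

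The missing ingredient is a regularization of the profile tied to the cutoff scale, as in \cite{KMVZZ17}. Build $w_n$ from $P_{\le |y_n|^{\theta}}\phi$ for a small $\theta>0$, with the same prescription at $t=0$ or $t=\pm\infty$.
\begin{itemize}
\item For large $n$ the truncated data still satisfy the sub-threshold hypotheses, so the $w_n$ are global with uniform $\dot S^1$ bounds.
\item Stability for the free equation gives $\|w_n-w\|_{\dot S^1(\R)}\to0$.
\item Bernstein and persistence of regularity give $\|\nabla^2 w_n\|_{L^\infty_tL^2_x}\lesssim |y_n|^{\theta}$.
\item The commutator terms are then $O(T|y_n|^{\theta-1})$ in $L^1_tL^2_x$ on the window.
\end{itemize}
A fixed smooth approximation of $\phi$ combined with an extra limit would also work. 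With this change, and with all smallness statements read as $\lim_{T\to\infty}\limsup_{n\to\infty}$ rather than as $n\to\infty$ alone, your Step 4 goes through.
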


\section{Compactness of non-scattering solution}\label{S:compact}

In this section, we prove the almost periodicity for solutions violating Theorem~\ref{thm1}. 

\medskip
We argue by contradiction. Suppose Theorem~\ref{thm1} fails, then there must exist a solution $
u(t,x): (-T_*, T^*)\times \mathbb R^d\to \mathbb C
$ to \eqref{NLSa}
satisfying 
\begin{gather}
    E_a(u_0)=E_0(W),\,\,\|u_0\|_{\dot H_a^1}<\|W\|_{\dot H^1},\label{sub-threshold-1}\\
\|u\|_{S((-T_*,T^*))}=\infty.\notag
\end{gather}
Without loss of generality, we may assume $u$ does not scattering forward in time:
\begin{align}\label{non-scattering}
\|u\|_{S([0,T^*) )}=\infty. 
\end{align}
Then the following precompactness result holds.
\begin{theorem}\label{compact}
For solution $u$ of \eqref{NLSa} obeying \eqref{sub-threshold-1} and \eqref{non-scattering}, there exist 
\[
(\lambda(t), x(t)): (0, T^*)\to \mathbb R^+\times \mathbb R^d
\]
such that 
\begin{align*}
\Big\{\lambda(t)^{-\frac{d-2}{2}}u(t,\frac{x-x(t)}{\lambda(t)}), \  t\in (0, T^*)\Big\} \mbox{ is precompact in }\dot H^1(\R^d).
\end{align*}
Consequently, there exist $(\lambda(t),x(t))\in C(I; \R^+\times \R^d)$ and function $C(\eta)$ such that for $t\in I$ and $\eta>0$, the following holds
\begin{align*}
    \int_{|x-x(t)|\geq C(\eta)\lambda(t)}|\nabla u(t,x)|^2+\frac{|u(t,x)|^2}{|x|^2}+|u(t,x)|^{\frac{2d}{d-2}}\,dx\leq \eta.
\end{align*}
\end{theorem}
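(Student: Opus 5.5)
The plan is the standard Palais--Smale argument: reduce precompactness to a statement about sequences and feed those sequences through the profile decompositions. It suffices to show that for any sequence $t_n\in(0,T^*)$ the sequence $u(t_n)$ has, modulo the symmetries $\mathbf g_{\lambda,x}$, a subsequence converging in $\dot H^1$. Since $\|u(t)\|_{\dot H^1_a}<\|W\|_{\dot H^1}$ for all $t$ by Lemma \ref{energy-control}, the sequence is bounded. By \eqref{non-scattering} we also have
\[
\|u\|_{S([t_n,T^*))}=\|u\|_{S((0,t_n])}=\infty
\]
after passing to a subsequence, or at least the forward norm is infinite and we argue on both sides as usual. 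The compactness statement with cut-offs then follows from precompactness in the standard way, using Hardy's inequality for the $|u|^2/|x|^2$ term and Sobolev for the $L^{2^*}$ term. Continuity of the parameters is obtained by the usual modification.

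\textbf{Step 1 (decomposition and energy decoupling).} Apply Lemma \ref{LPDb} to $f_n=u(t_n)$. The $\dot H^1_a$ and $L^{2^*}$ orthogonality give
\[
E_0(W)=E_a(u(t_n))=\sum_j E_a(\phi_n^j)+E_a(w_n^J)+o(1),
\]
and
\[
\sum_j\|\phi_n^j\|_{\dot H^1_a}^2+\|w_n^J\|^2_{\dot H^1_a}\le \|W\|_{\dot H^1}^2-\delta\quad\text{for large }n,
\]
where $\delta>0$ depends on the sequence. Here I need uniform gap: I would use \eqref{Ec1}, which shows $\|u(t_n)\|^2_{\dot H^1_a}<\|W\|^2_{\dot H^1}$. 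A uniform gap is not automatic, though, since the kinetic energy can approach $\|W\|^2$. So instead I would treat two cases.

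\textbf{Case A:} $\limsup\|u(t_n)\|_{\dot H^1_a}<\|W\|_{\dot H^1}$. By \eqref{Ec1} applied to each piece, every $E_a$ is positive, each $\phi^j$ has kinetic norm below $\|W\|$ (in the limiting $\dot H^1$ or $\dot H^1_a$ sense), and each profile has energy at most $E_0(W)$. If there are at least two nonzero profiles, or one profile plus a remainder that does not vanish in $\dot H^1$, then each profile has energy strictly less than $E_0(W)$ with kinetic energy below threshold. For $x_n^j\equiv0$ I invoke Theorem \ref{scatterpa}; for $|x_n^j|/\lambda_n^j\to\infty$ I invoke Lemma \ref{ENP}; and for $t_n^j\to\pm\infty$ I use wave operators. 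This yields nonlinear profiles with finite $S$-norm. The approximate solution $\sum_j v_n^j+e^{-it\mathcal L_a}w_n^J$ then has bounded $S$-norm, since the cross terms vanish by orthogonality and by \eqref{enp2} together with the density approximation. The stability Lemma \ref{stability} then contradicts $\|u\|_{S}=\infty$. Hence $J^*=1$ and $w_n\to0$ in $\dot H^1$.

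\textbf{Ruling out the remaining bad configurations in Case A.} If the single profile is of type $|x_n|/\lambda_n\to\infty$, it sees no potential, so $E_0(\phi)=E_0(W)$ and $\|\phi\|_{\dot H^1}<\|W\|_{\dot H^1}$. This is impossible by the sharp variational characterization: such a $\phi$ would force $\phi$ to be a rescaled $W$ with equal kinetic norm, which contradicts the strict inequality. If the single profile has $t_n\to\pm\infty$, I use wave operators: the nonlinear profile then has kinetic energy $<\|W\|$ and energy $<E_0(W)$, unless $\tfrac12\|\phi\|^2=E_0(W)$. But $\tfrac12\|\phi\|^2\le\tfrac12\|W\|^2(1-\delta)$ is not enough here, so I would instead argue that such a scattering profile is global with finite $S$-norm by Theorem \ref{scatterpa} (first bullet, after checking the kinetic energy stays below threshold via Lemma \ref{energy-control}), which gives a contradiction. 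What remains is $t_n\equiv0$ and $x_n\equiv0$, which is exactly the precompactness claimed.

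\textbf{Case B (the main obstacle):} $\|u(t_n)\|_{\dot H^1_a}\to\|W\|_{\dot H^1}$. Then $\|u(t_n)\|_{2^*}^{2^*}\to\|W\|_{2^*}^{2^*}$, so the sequence is an optimizing sequence for the sharp Sobolev inequality \eqref{ineq:Sob+}. I would apply Lemma \ref{LPDa}. Strict subadditivity forces a single profile with vanishing remainder. A profile centered at the origin, measured in the $\dot H^1_a$ norm, cannot attain the $a=0$ constant by part (ii) of Proposition \ref{P:Sob}. Hence $|x_n|/\lambda_n\to\infty$ and $\phi=\alpha W(\lambda\cdot-y)$, that is, $u(t_n)\to$ a translated, rescaled $W$ in $\dot H^1$, which is already precompact modulo symmetries.

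This is the step where a careful argument is needed: one must verify that the $\dot H^1_a$ excess from the potential vanishes along $|x_n|/\lambda_n\to\infty$, which is consistent with (A.7).
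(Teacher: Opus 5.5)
Your architecture (Lemma \ref{LPDb}, nonlinear profiles from Theorem \ref{scatterpa} and Lemma \ref{ENP}, then stability) is the paper's. The Case A/B split is unnecessary. With two profiles or a non-vanishing remainder, every piece is already strictly below threshold in both energy and kinetic energy, by positivity of energy \eqref{Ec1} and decoupling, so no uniform gap on $\|u(t_n)\|_{\dot H^1_a}$ is needed. Your Case B argument via Lemma \ref{LPDa} is correct (it is Proposition \ref{P:vcW}), but redundant.

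The genuine gap is in excluding $t_n\to\pm\infty$ for a single profile.
\begin{itemize}
\item There $w_n\to0$ and $\|\phi_n\|_{2^*}\to0$, so $\tfrac12\|\phi\|^2=E_0(W)$ exactly, in $\dot H^1_a$ or $\dot H^1$ according to the profile type. The wave-operator solution $v$ therefore sits on the threshold surface and is itself of the kind treated in Theorem \ref{thm1}.
\item Lemma \ref{energy-control} gives only the pointwise bound $\|v(t)\|_{\dot H^1_a}<\|W\|_{\dot H^1}$. The first bullet of Theorem \ref{scatterpa} needs the uniform bound $\sup_t\|v(t)\|_{\dot H^1_a}<\|W\|_{\dot H^1}$. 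If the pointwise bound sufficed, Theorem \ref{thm1} would be an immediate corollary of Theorem \ref{scatterpa}, so claiming that $v$ scatters globally is circular.
\item In the translated case Lemma \ref{ENP} is also unavailable, since it requires $\tfrac12\|\phi\|_{\dot H^1}^2<E_0(W)$ strictly.
\end{itemize}

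What works, and what the paper does, is a one-sided linear argument. For $t_n\to+\infty$ you have $\|e^{-it\mathcal L_a}u(\tau_n)\|_{S([0,\infty))}\lesssim\|e^{-it\mathcal L_a}\phi\|_{S([t_n,\infty))}+\|w_n\|_{\dot H^1}\to0$. When $|x_n|/\lambda_n\to\infty$, compare $e^{-it\mathcal L_a^n}\phi$ with $e^{it\Delta}\phi$ via \cite[Corollary 3.5]{KMVZZ17}. Small-data theory then bounds $\|u\|_{S([\tau_n,T^*))}$. For $t_n\to-\infty$ the backward version bounds $\|u\|_{S((-T_*,\tau_n])}$, which contradicts $\|u\|_{S([0,\tau_n])}\to\infty$.

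Two smaller points. Your displayed claim $\|u\|_{S((0,t_n])}=\infty$ is false for $t_n<T^*$; the correct statement is $\|u\|_{S([0,t_n])}\to\infty$ as $t_n\to T^*$. You also never dispose of $J^*=0$, though that is the routine small-data case.

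Separately, your claim that a single translated profile with $t_n\equiv0$, $E_0(\phi)=E_0(W)$ and $\|\phi\|_{\dot H^1}<\|W\|_{\dot H^1}$ is excluded by the sharp variational characterization is false. Sharp Sobolev pins down $W$ only when the kinetic energy equals $\|W\|_{\dot H^1}$. The set $\{E_0=E_0(W),\ \|\phi\|_{\dot H^1}<\|W\|_{\dot H^1}\}$ is nonempty: it contains the Duyckaerts--Merle solution $W^-$ \cite{DM08}, and suitable multiples $c\psi$ of any $\psi$ with $\|\psi\|_{2^*}\ll\|\psi\|_{\dot H^1}$.

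Ruling this configuration out when $\bd(u(t_n))$ stays away from zero requires two ingredients. One is the free threshold classification \cite[Theorem 1.3]{MMMZ25}, which gives one-sided scattering of the free solution. The other is the forward/backward versions of Lemma \ref{ENP}. This is exactly the content of Lemma \ref{distant}.

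You do not need to rule it out at all. The statement allows the translation parameter $x(t)$, so this configuration already gives convergence modulo symmetries, and the paper simply keeps it.
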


\begin{proof} As the general framework for obtaining the precompactness (modulo symmetires) of critical element is by now standard, we only sketch the main steps.

 We need to show that for any sequences $\{\tau_n\}\subset [0,T^*)$, there exists a sequence of scaling parameters $\{\lambda_n\}$ such that the following holds after passing to a subsequence,
\begin{align}\label{compactness-property}
    \lambda_n^{-\frac{d-2}{2}}u\Big(\tau_n,\frac{x-x_n}{\lambda_n}\Big)\mbox{  converges strongly in }\dot H^1(\R^d).
\end{align}
By the continuity of $u$ in $\dot H^1$ with respect to time, it suffices to consider $\tau_n\to T^*$.  

From the coercivity of energy (Lemma \ref{energy-control}), it follows that  $u(\tau_n)$ is bounded in $\dot{H}^1(\R^d)$, and we  apply the linear profile decomposition (Lemma \ref{LPDb}) to get
\begin{align*}
u_n:=u(\tau_n)=\sum_{j=1}^J\phi_n^j+w_n^J, 
\end{align*}
such that for $0\leq J\leq J^*$, 
\begin{align}
&\lim_{J \to J^*} \limsup_{n \to \infty} \|e^{-it \mathcal{L}_a} w_n^J\|_{S(\mathbb{R})} = 0,\label{remind}\\
&\lim_{n\to\infty}E_a(u_n)=\lim_{n\to\infty}\big(\sum_{j=1}^JE_a(\phi_n^j)+E_a(w_n^J)\big)=E_0(W),\label{energy-ortho}
\\
&\lim_{n\to\infty}\|u_n\|_{\dot H_a^1}^2=\lim_{n\to\infty}\Big(\sum_{j=1}^J\|\phi_n^j\|_{\dot H_a^1}^2+\|w_n^J\|_{\dot H_a^1}^2\Big)\le \|W\|_{\dot H^1}^2,\label{kinetic-ortho}
\end{align}
where the last inequality follows from \eqref{Ec2} in Lemma \ref{energy-control}.

\medskip

Next, we consider three possible cases for $J^*$.

\medskip

\noindent\textbf{Case 1: $J^*=0$.}
In this case, by using \eqref{remainder} in Lemma \ref{LPDb}, we obtain
\begin{align*}
\limsup_{n \to \infty} \big\|e^{-it\la}u(\tau_n)\big\|_{S([0,+\infty))}=0.
\end{align*}
By the stability result (Lemma \ref{stability}) and some standard estimates, it follows that  $u(t+\tau_n)$ can be well approximated by $e^{-it\la}u(\tau_n)$, hence,
\begin{align*}
\|u(t+\tau_n)\|_{S([0,+\infty))}=\|u(t)\|_{S([\tau_n,+\infty))} \lesssim1,
\end{align*}
for all sufficiently large $n$, which clearly contradicts \eqref{non-scattering}. This rules out Case 1.

\medskip

\noindent\textbf{Case 2: $J^*\ge 2$.}
 Following \cite{KMVZZ17,YZ23}, we construct a nonlinear profile corresponding to each linear profile $\phi_n^j$. 
  
\textit{Type (i) nonlinear profile:} $x_n^j \equiv 0$. 

If $t_n^j \equiv 0$, let $v^j : I_j \times \mathbb{R}^d \to \mathbb{C}$ be the maximal-lifespan solution to \eqref{NLSa} with initial data $\phi^j$. If $t_n^j \to \pm\infty$, define $v^j : I_j \times \mathbb{R}^d \to \mathbb{C}$ to be the maximal-lifespan solution to (\eqref{NLSa}) that scatters to $e^{-it\mathcal{L}_a} \phi^j$ as $t \to \pm\infty$. Set
\[
v_n^j(t, x) = \mathbf{T}_{\lambda_n^j,x_n^j,t_n^j}(v^j),
\]
which is a maximal-lifespan solution on $I_n^j = \{t \in \mathbb{R} \mid (\lambda_n^j)^{-2}t + t_n^j \in I_j\}$.

\textit{Type (ii) nonlinear profile:} $\frac{|x_n^j|}{\lambda_n^j} \to \infty$. 

In this case, $\Vert\phi_n^j\Vert_{\dot{H}_a^1}\to \Vert\phi^j\Vert_{\dot{H}^1}$. We define $v_n^j$ to be the maximal-lifespan solution of (\eqref{NLSa}) on $I_n^j$ with $v_n^j(0) = \phi_n^j$. The existence of a global scattering solution $v_n^j$ follows from Lemma \ref{ENP}.

From the above construction, we have
\begin{equation}
\lim_{n \to \infty} \|v_n^j(0) - \phi_n^j\|_{\dot{H}^1} = 0. \tag{65}
\end{equation}
Using $a> 0$ and the coercivity of energy (Lemma \ref{energy-control}), we obtain
\begin{align}
  \|\phi^j\|_{\dot H^1}^2< \|\phi_n^j\|_{\dot H_a^1}^2<\frac{\|W\|_{\dot H^1}^2}{E_0(W)}E_a(\phi_n^j),\quad \|w_n^J\|_{\dot H_a^1}^2< \frac{\|W\|_{\dot H^1}^2}{E_0(W)}E_a(w_n^J).\label{profile-control}
\end{align}
This implies that $$\liminf\limits_{n\to\infty}E_a(\phi_n^j)>0,\quad E_a(w_n^J)>0,$$ and hence, for all $1\le j\le J^*$,  we have 
$$
0<E_a(\phi_n^j)<E_0(W), \quad \|\phi_n^j\|_{\dot H^1}<\|W\|_{\dot H^1}.
$$

By the stability result Lemma \ref{stability},  Theorem \ref{scatterpa} (for type (i)) and Lemma \ref{ENP} (for type (ii)), each $v_{n}^{j}$ is a global solution satisfying
 \[
 \Vert v_{n}^{j}\Vert _{S(\mathbb{R})}<\infty, \mbox{ and }\Vert v_{n}^{j}\Vert _{\dot{X}^{1}(\mathbb{R})}<\infty.
 \]
By density argument, for any $\varepsilon >0$, there exists $\psi
_{\varepsilon }^{j}\in C_{c}^{\infty }$ such that%
\begin{equation*}
\Vert v^{j}-\psi _{\varepsilon }^{j}\Vert _{\dot{X}^{1}(\mathbb{R}%
)}<\varepsilon .
\end{equation*}%
A change of variable yields
\begin{equation}
\Vert v_{n}^{j}(t,x)-\mathbf{T}_{\lambda_n^j,x_n^j,t_n^j}(\psi _{\varepsilon }^{j})\Vert _{\dot{X}%
^{1}(\mathbb{R})}<\varepsilon.  \label{close}
\end{equation}%

We record two standard properties (cf. \cite{KMVZZ17, Y20, Y21, YZ23}):

\begin{itemize}
\item\textit{ Boundedness:}
\begin{equation}
\Vert v_{n}^{j}\Vert _{\dot{X}^{1}(\mathbb{R})}\lesssim \left\{ 
\begin{array}{c}
\Vert \phi _{n}^{j}\Vert _{\dot{H}_{a}^{1}},\text{ \ \ \ \ if }\Vert \phi
_{n}^{j}\Vert _{\dot{H}_{a}^{1}}\leq \eta _{0}; \\ 
1,\text{\ \ \ \ \ \ \ \ \ \ \ \ \ if }\Vert \phi _{n}^{j}\Vert _{\dot{H}_{a}^{1}}>\eta _{0};
\end{array}
\right.  \label{Bnd_v}
\end{equation}
for some small $\eta_0>0$.
 
\item \textit{Orthogonality (for $j\neq k)$:}
\begin{equation}
\Vert \nabla v_{n}^{j}\nabla v_{n}^{k}\Vert _{\frac{d+2}{d-2},\frac{d(d+2)}{%
d^{2}+4}}+\Vert v_{n}^{j}v_{n}^{k}\Vert _{\frac{d+2}{d-2},\frac{d+2}{d-2}%
}+\Vert \nabla v_{n}^{j}v_{n}^{k}\Vert _{\frac{d+2}{d-2},\frac{d(d+2)}{%
d^{2}-d+2}}=o_{n}(1).  \label{disjoint1}
\end{equation}
\end{itemize}

We now construct the approximate solution 
\begin{align}\label{uapp}
u_n^J(t,x)=\sum_{j=1}^Jv_n^j(t,x)+e^{-it\la}w_n^J(x).
\end{align}
Arguing as in \cite{Y21}, we obtain:

\begin{itemize}

\item \textit{Claim 1: }$\lim_{n\rightarrow \infty }\left\Vert
u_{n}^{J}(0)-u_{n}(0)\right\Vert _{\dot{H}_{a}^{1}}=0$ for any $J$.

\item\textit{Claim 2: }$\overline{\lim\limits_{n\rightarrow \infty }}\left\Vert
u_{n}^{J}\right\Vert _{\dot{X}^{1}(\mathbb{R})}\lesssim _{E_{c},\delta }1$
uniformly in $J$.

\item\textit{Claim 3:} $\lim_{J\rightarrow \infty }\overline{\lim }_{n\rightarrow
\infty }\Vert (i\partial _{t}+\mathcal{L}_{a})u_{n}^{J}+|u_{n}^{J}|^{%
\frac{4}{d-2}}u_{n}^{J}\Vert _{\dot{N}^{1}(\mathbb{R})}=0$.
\end{itemize}

Thus, for sufficiently large $n$ and $J$, $u_{n}^{J} $ is an approximate solution to \eqref{NLSa} with finite scattering norm. By the stability result (Lemma \ref{stability}), it follows that $u_{n}$
is also global with finite scattering norm, contradicting \eqref{non-scattering}. This excludes Case 2.

\medskip

\noindent\textbf{Case 3: $J^*=1$.}
The linear profile decomposition reduces to 
\begin{equation}\label{zz1}
    u(\tau_n)=\phi_n+w_n, \quad \phi_n=\lambda_n^{-\frac{d-2}{2}}(e^{-it_n\la^n}\phi)\big(\tfrac{x-x_n}{\lambda_n}\big).
\end{equation}
Clearly, the desired precompactness result follows provided that $\|w_n\|_{\dot{H}^1}\to 0$ as $ n\to\infty$ and $t_n\equiv 0$.

We first show that 
\begin{equation}\label{zz2}
    \|w_n\|_{\dot{H}^1}\to 0\; (n\to \infty).
\end{equation}
If this were not the case, then, arguing as in Case 2, one could construct an approximate solution as in \eqref{uapp} (with $J=1$ ) having finite  scattering norm. By the stability result, this would imply that $u$ is  global scattering solution, contradicting \eqref{non-scattering}. 

It remains to rule out the possibility that  $t_n\to\pm\infty$ as $n\to\infty$. We consider only the case $t_n\to\infty$, as the case $t_n\to -\infty$ can be handled similarly. For completeness, we outline the steps following \cite{YZ23}. 

\medskip

\textsf{\bf Case $t_n\to+\infty$ and $x_n\equiv0$.} 
By  \eqref{zz1} and \eqref{zz2}, together with the monotone convergence theorem and Strichartz estimate, we have
\begin{align*}
\|e^{-it\la}u(\tau_n)\|_{S(\{t\ge 0\})}&\lesssim  \|e^{-it\la}\phi_n\|_{S(\{t\ge 0\})}+ \|w_n\|_{\dot{H}^1}\\
&=\|\lambda_n^{-\frac{d-2}{2}} e^{-i(\lambda_n^{-2}t+t_n)\mathcal{L}_a}\phi(\tfrac{x}{\lambda_n})\|_{S(\{t\ge 0\})}+ \|w_n\|_{\dot{H}^1}\\
&=\|e^{-it\la}\phi\|_{S(\{t\ge t_n\})}+\|w_n\|_{\dot{H}^1}
 \to0,\,\,\mbox{as }n\to\infty.
\end{align*}
By the small data theory for (\eqref{NLSa}), it follows that $u(t+\tau_n)$ scatters forward in time for sufficiently large $n$, contradicting \eqref{non-scattering}.

\medskip

\textsf{\bf Case $t_n\to+\infty$ and $\frac{|x_n|}{\lambda_n}\to\infty$.} 
Arguing similarly, we obtain
\begin{align*}
\|e^{-it\la}u(\tau_n)\|_{S(\{t\ge 0\})}&\lesssim  \|e^{-it\la}\phi_n\|_{S(\{t\ge 0\})}+ \|w_n\|_{\dot{H}^1} =\|e^{-it\la^n}\phi\|_{S(\{t\ge t_n\})}+\|w_n\|_{\dot{H}^1} \\
&\le \|e^{-it\la^n}\phi-e^{it\Delta}\phi\|_{S(\{t\ge t_n\})}+\|e^{it\Delta}\phi\|_{S(\{t\ge t_n\})}+\|w_n\|_{\dot{H}^1} 
 \to0,
\end{align*}
where $\|e^{it\Delta}\phi\|_{S(\{t\ge t_n\})}\to0$  follows from the monotone convergence theorem and the Strichartz estimate, and $\|e^{-it\la^n}\phi-e^{it\Delta}\phi\|_{S(\{t\ge t_n\})}\to 0$ follows from Corollary 3.5 in \cite{KMVZZ17}.
\end{proof}

 Next, we recall a result from \cite{DM08, MMMZ25}, which asserts global existence for any sub-threshold solution. With minor modifications, the same proof applies in our setting. 
\begin{proposition}\label{blowup}
    Suppose that $u$ is a solution to \eqref{NLSa} in its maximal lifespan $I$ and satisfies
    \begin{align}\label{sub-threshold-con}
    E_a(u_0)=E_0(W),\,\,\|u_0\|_{\dot H_a^1 }\leq\|W\|_{\dot H^1 },
    \end{align}
    then $I=\R$.
\end{proposition}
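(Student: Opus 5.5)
We argue by contradiction and follow the Duyckaerts--Merle strategy of \cite{DM08}, as adapted in \cite{MMMZ25}. Suppose $I=(-T_*,T^*)$ with, say, $T^*<\infty$. The first step is to pin down the kinetic energy. By Lemma \ref{energy-control}, $\|u(t)\|_{\dot H^1_a}<\|W\|_{\dot H^1}$ for all $t\in I$; this also covers the borderline case $\|u_0\|_{\dot H^1_a}=\|W\|_{\dot H^1}$ in \eqref{sub-threshold-con}, which is excluded by \eqref{Ec1} once $u_0\neq0$. Hence $u$ is bounded in $\dot H^1$. Finite-time blow-up then forces $\|u\|_{S([0,T^*))}=\infty$, because otherwise the standard local theory would extend $u$. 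So $u$ satisfies \eqref{sub-threshold-1} and \eqref{non-scattering}, and Theorem \ref{compact} supplies modulation parameters $(\lambda(t),x(t))$ for which the orbit is precompact modulo symmetries on $[0,T^*)$.

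The second step uses the standard consequence of compactness together with finite-time blow-up: $\lambda(t)\to\infty$ as $t\to T^*$. If $\lambda(t_n)$ stayed bounded along some sequence, a compactness/local-theory argument would give a uniform lifespan beyond $t_n$ and contradict $T^*<\infty$. Concentration then gives $\int_{|x-x(t)|\ge R}(|\nabla u|^2+|u|^{2^*}+|u|^2/|x|^2)\,dx\to0$ for every fixed $R>0$. We also need $x(t)$ to converge to some $x_*$. We obtain this as in \cite{DM08} by localizing the mass: $\partial_t\int\varphi_R|u|^2=2\Im\int\nabla\varphi_R\cdot\nabla u\,\bar u$, which is bounded by $CR^{-1}\|u\|_{L^2(|x|\sim R)}\|\nabla u\|_2$. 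With Hardy and H\"older this yields $\|u(t)\|_{L^2(|x-x_*|\le R)}\to0$ as $t\to T^*$ for each $R$, and then a localized mass identity on $[t,T^*)$ gives $\int \varphi_R|u(t)|^2\lesssim R^{-2}(T^*-t)^2\cdot$(bounded), so $u(t)\in L^2$. Letting $R\to\infty$ shows that the mass of $u$ vanishes identically, so $u\equiv0$, a contradiction. Note that the potential term does not enter the mass identity, since $\mathcal L_a$ is real and symmetric. The localization must be centered at the limit point $x_*$, which may be nonzero, and this causes no trouble because $\int\nabla\varphi_R\cdot\nabla u\,\bar u$ does not involve the potential.

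The main obstacle is to justify the convergence of $x(t)$, or equivalently to do the localization without it, in the non-radial setting where the potential breaks translation invariance. I would first try to avoid identifying $x_*$ at all. Instead I would show directly that for every $\varepsilon>0$ there is a time $t$ near $T^*$ such that $|u(t)|$ concentrates in a ball of radius $\varepsilon$. I would then use the bound $|\partial_t\int\varphi_R(\cdot-y)|u|^2|\lesssim R^{-1}\|u\|_{\dot H^1}^2R$, which is uniform in the center $y$, to transport the mass smallness on balls of radius $R$ around the (bounded) points $x(t)$. Boundedness of $x(t)$ follows because if $|x(t_n)|/\lambda(t_n)\to\infty$, the limiting profile would solve the free NLS at energy $E_0(W)$ with kinetic energy at most that of $W$, and blow up in finite time. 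The threshold classification of \cite{DM08} excludes this unless the profile is $W$, which does not blow up. Once this is settled, the remaining steps are routine.
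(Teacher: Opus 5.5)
\textbf{Gap: the convergence of $x(t)$.} Your reduction is the standard Kenig--Merle/Duyckaerts--Merle route that the paper defers to: Lemma \ref{energy-control} to get $\|u(t)\|_{\dot H^1_a}<\|W\|_{\dot H^1}$, the blow-up criterion, Theorem \ref{compact}, $\lambda(t)\to\infty$, and then a localized mass argument in which the real potential does not appear. The gap is the step you yourself call the main obstacle. You assert that $x(t)$ must converge, or stay bounded, and that the localization must be centered at its limit, and your justification of boundedness does not hold.

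After rescaling at the concentration scale, the remaining lifespan $(T^*-t_n)\lambda(t_n)^2$ is only bounded \emph{below} by the local theory and may tend to $+\infty$. So nothing forces the limiting free-NLS profile to blow up in finite time; it only has to be non-scattering. If $\|\phi\|_{\dot H^1}<\|W\|_{\dot H^1}$ you could still conclude via one-sided scattering \cite{MMMZ25}, as in Lemma \ref{distant}. But if $\bd(u(t_n))\to0$, the profile is $W$ itself, which is global and non-scattering in both directions, and no contradiction arises. In your normalization the relevant quantity would also be $|x(t)|\lambda(t)$, not $|x(t)|/\lambda(t)$.

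\textbf{Fix: no information on $x(t)$ is needed.} For any fixed $R,\rho>0$, split $\{|x|\le R\}$ according to whether $|x-x(t)|<\rho$ and use H\"older:
\[
\int_{|x|\le R}|u(t,x)|^2\,dx\lesssim \rho^2\|u(t)\|_{2^*}^2+R^2\Big(\int_{|x-x(t)|\ge\rho}|u(t,x)|^{2^*}dx\Big)^{\frac{d-2}{d}}.
\]
The concentration radius $C(\eta)/\lambda(t)$ tends to $0$, so the last integral is eventually at most $\eta$, wherever $x(t)$ lies. Hence $\int_{|x|\le R}|u(t)|^2\to0$ as $t\to T^*$ for every $R$, with the localization centered at the origin.

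Now set $M_R(t)=\int\varphi(x/R)|u(t,x)|^2dx$. Hardy gives $|M_R'(t)|\lesssim\|\nabla u(t)\|_2\,\||x|^{-1}u(t)\|_2\lesssim\|W\|_{\dot H^1}^2$, uniformly in $R$ and $t$, so $M_R(t)\lesssim T^*-t$. Letting $R\to\infty$ yields $\|u(t)\|_2^2\lesssim T^*-t$. Mass conservation then forces $u\equiv0$, contradicting $E_a(u)=E_0(W)>0$.

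Your bound $R^{-2}(T^*-t)^2$ does not follow from the first-derivative estimate you wrote. It would need the virial-type bound $|M_R''|\lesssim R^{-2}$ together with $M_R'(t)\to0$, but the linear bound already suffices.
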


Define the distance function $\bd(f)$ by
\begin{align}\label{dist}
\bd(f)=\bigl | \|f\|_{\dha}^2-\|W\|_{\dot H^1}^2\bigr|. 
\end{align}
Before concluding this section, we collect two results concerning on convergence of $\bd(u(t))$ along time sequences. The first is an immediate corollary from Theorem \ref{scatterpa}. 

\begin{lemma}\label{sequencec}
There exists a sequence $\{t_n\}\subset \R$ with $t_n\to\infty$ such that $\bd(u(t_n))\to0$ as $n\to\infty$,  where $u$ is the solution to \eqref{NLSa} given in Theorem \ref{compact}.
\end{lemma}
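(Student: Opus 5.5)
We argue by contradiction. Let $u$ be the non-scattering solution from Theorem \ref{compact}; by Proposition \ref{blowup} it is global, so $T^*=\infty$. Suppose no sequence $t_n\to\infty$ has $\bd(u(t_n))\to 0$. Then there are $\delta>0$ and $T_0$ with $\bd(u(t))\ge\delta$ for all $t\ge T_0$. Since Lemma \ref{energy-control} gives $\|u(t)\|_{\dot H^1_a}<\|W\|_{\dot H^1}$, this means
\[
\sup_{t\ge T_0}\|u(t)\|_{\dot H^1_a}^2\le \|W\|_{\dot H^1}^2-\delta<\|W\|_{\dot H^1}^2 .
\]

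Theorem \ref{scatterpa} is stated for the full lifespan, so we apply it to the solution $v(t):=u(t+T_0)$, which is global. Its forward kinetic energy is uniformly below threshold. Its backward evolution also satisfies $E_a(v)=E_0(W)$ and $\|v(t)\|_{\dot H^1_a}<\|W\|_{\dot H^1}$, but a priori its supremum over all times could equal $\|W\|_{\dot H^1}$.

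\textbf{Main obstacle.} The difficulty is that the first criterion of Theorem \ref{scatterpa} requires a supremum over the whole lifespan. There are two ways to get around this.

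\begin{itemize}
\item \emph{One-sided version.} Note that the proof of Theorem \ref{scatterpa} (concentration compactness and rigidity in \cite{Y20,Y21,YZ23}) is one-directional. If $\sup_{t\ge 0}\|v(t)\|_{\dot H^1_a}<\|W\|_{\dot H^1}$, then $v$ scatters forward. This is the standard form of the argument: a forward non-scattering solution yields a forward minimal blow-up element, which is then excluded.
\item \emph{Precompactness route.} Alternatively, use the compactness of Theorem \ref{compact}. Take any $\tau_n\to\infty$ and extract, modulo symmetries, a limit $\phi$ in $\dot H^1$. Using \eqref{ineq:Sob+} and the uniform gap $\delta$, the limiting solution (possibly a solution of the free NLS, if $|x(\tau_n)|/\lambda(\tau_n)\to\infty$) has kinetic energy bounded by $\|W\|_{\dot H^1}^2-\delta$ for all times. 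The limit is taken by stability (Lemma \ref{stability}) on compact time intervals, where the forward and backward gaps are inherited from $t\ge T_0$ as $\tau_n\to\infty$. By Theorem \ref{scatter} (with $a=0$) or Theorem \ref{scatterpa}, this limiting solution scatters with finite $S$-norm. Stability then transfers the finite $S$-norm back to $u(\cdot+\tau_n)$ on $[0,\infty)$, which contradicts \eqref{non-scattering}.
\end{itemize}

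We would present the first route, remarking that the cited proofs are time-directional, and keep the second as the rigorous fallback. In either case, forward scattering of $u$ contradicts \eqref{non-scattering}. Hence there must be times $t_n\to\infty$ with $\bd(u(t_n))\to0$.
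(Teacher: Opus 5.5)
Your first route is the paper's argument: assume $\bd(u(t))$ stays bounded below, so that the kinetic energy is uniformly below $\|W\|_{\dot H^1}$, then apply Theorem \ref{scatterpa} forward in time to get $\|u\|_{S([0,\infty))}<\infty$, contradicting \eqref{non-scattering}. Your negation (a gap only for $t\ge T_0$, followed by a time translation) is stated more carefully than the paper's, which assumes the gap for all $t\ge 0$. The fallback is unnecessary, and as written it does not avoid the one-sided issue: the backward gap passes to the limiting solution only if $\tau_n-T_0$ is large compared with the natural time scale of $u$ at $\tau_n$ (the square of its concentration length), and almost periodicity alone does not guarantee this.
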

\begin{proof}
 We argue by contradiction. Suppose there exists $c_1>0$ such that $\bd(u(t))\geq c_1$ for all $t\geq0$. Then, for some $\varepsilon>0$, we have
\begin{align*}
    \sup_{t\in[0,\sup I)}\|u\|_{\dot H_a^1}<(1-\varepsilon)\|W\|_{\dot H^1},
\end{align*}
where $I$ is the maximal lifespan of $u$. By applying Theorem \ref{scatterpa} forward in time, we conclude that $\|u\|_{S([0,\infty))}<\infty$, which contradicts \eqref{non-scattering}.
\end{proof}

Next we establish a conditional convergence result for $\bd(u(t))$. 

\begin{lemma}\label{distant}
 Let $u$ be the solution to \eqref{NLSa} given in Theorem \ref{compact}.  Let $t_n\to\infty$ be any sequence  such that $|x(t_n)|\lambda(t_n)^{-1}\to\infty$. Then $\bd(u(t_n))\to 0$. 
\end{lemma}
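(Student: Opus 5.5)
The idea is to use the precompactness from Theorem \ref{compact} to extract a limiting profile, and to observe that the potential term disappears along the sequence because the concentration point escapes to infinity relative to the scale. This turns the problem into a statement about the free energy $E_0$. On that surface the sharp Sobolev inequality (Proposition \ref{P:Sob}(i) with $a=0$) forces the kinetic energy to equal that of $W$.

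\textbf{Step 1: extract the profile.} By Theorem \ref{compact}, after passing to a subsequence we can write
\[
u(t_n)=\mathbf g_{\mu_n,y_n}(\phi_n),\qquad \phi_n\to\phi \ \text{ in } \dot H^1(\R^d),
\]
where $(\mu_n,y_n)$ is the spatial scale and center associated with $(\lambda(t_n),x(t_n))$. The hypothesis $|x(t_n)|\lambda(t_n)^{-1}\to\infty$ translates into $|y_n|/\mu_n\to\infty$. Since $\mathbf g_{\mu_n,y_n}$ is an isometry on $\dot H^1$ and $L^{2^*}$, we get $u(t_n)=\mathbf g_{\mu_n,y_n}(\phi)+o_{\dot H^1}(1)$.

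\textbf{Step 2: the potential term vanishes.} I will show that
\[
\int \frac{a}{|x|^2}\bigl|\mathbf g_{\mu_n,y_n}\phi\bigr|^2\,dx=\int\frac{a|\phi(x)|^2}{|x+y_n/\mu_n|^2}\,dx\longrightarrow 0 .
\]
Approximate $\phi$ in $\dot H^1$ by some $\psi\in C_c^\infty$. For $\psi$ the integral is $O(\mu_n^2/|y_n|^2)\to0$, since the support stays at distance $\gtrsim |y_n|/\mu_n$ from the singularity. The error $\phi-\psi$ is controlled uniformly in $n$ by Hardy's inequality. The $o_{\dot H^1}(1)$ remainder from Step 1 is handled by Hardy as well. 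Consequently
\[
\|u(t_n)\|_{\dot H^1_a}\to\|\phi\|_{\dot H^1},\qquad E_a(u(t_n))\to E_0(\phi).
\]
By energy conservation and Lemma \ref{energy-control}, this gives $E_0(\phi)=E_0(W)>0$ (in particular $\phi\neq0$) and $\|\phi\|_{\dot H^1}\le\|W\|_{\dot H^1}$.

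\textbf{Step 3: rigidity for $E_0$.} Repeat the computation of Lemma \ref{energy-control} with $a=0$, now using the non-strict inequality \eqref{ineq:Sob-}. Set $y=\|\phi\|_{\dot H^1}/\|W\|_{\dot H^1}\in(0,1]$ and use $\|W\|_{\dot H^1}^2=\|W\|_{2^*}^{2^*}$. One obtains
\[
E_0(W)=E_0(\phi)\ \ge\ y^2\Bigl(\tfrac12\|W\|_{\dot H^1}^2-\tfrac1{2^*}\|W\|_{2^*}^{2^*}y^{2^*-2}\Bigr).
\]
The right-hand side is $\|W\|_{\dot H^1}^2\,h(y)$ with $h(y)=\tfrac12y^2-\tfrac1{2^*}y^{2^*}$. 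This function is strictly increasing on $(0,1]$, and $\|W\|_{\dot H^1}^2\,h(1)=E_0(W)$. Hence the displayed inequality forces $y=1$, that is, $\|\phi\|_{\dot H^1}=\|W\|_{\dot H^1}$.

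\textbf{Conclusion.} Combining with Step 2 gives $\|u(t_n)\|_{\dot H^1_a}^2\to\|W\|_{\dot H^1}^2$, i.e. $\bd(u(t_n))\to0$ along the subsequence. Since every subsequence has a further subsequence with this property, the full sequence satisfies $\bd(u(t_n))\to0$.

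I expect the main care to be needed in Step 2: correctly converting the normalization of Theorem \ref{compact} into the condition $|y_n|/\mu_n\to\infty$, and showing the Hardy-weighted term vanishes uniformly despite the $o(1)$ remainder. Step 3 is a routine monotonicity argument.
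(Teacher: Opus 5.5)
Your Steps 1 and 2 are fine; the paper uses the same profile extraction and the same vanishing of the Hardy term. The gap is Step 3, which is the heart of the matter.

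From the sharp Sobolev inequality you correctly obtain $E_0(\phi)\ge \|W\|_{\dot H^1}^2h(y)$. But $h$ is increasing with $\|W\|_{\dot H^1}^2h(1)=E_0(W)$, so the right-hand side is at most $E_0(W)$ for \emph{every} $y\in(0,1]$. The inequality $E_0(W)\ge \|W\|_{\dot H^1}^2h(y)$ is therefore just $h(y)\le h(1)$ and forces nothing. To force $y=1$ you would need an upper bound $E_0(\phi)\le \|W\|_{\dot H^1}^2h(y)$, which Sobolev does not give.

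In fact the static claim is false. Take $\psi\in\dot H^1$ that is not an extremizer, normalized so that $\|\psi\|_{\dot H^1}=\|W\|_{\dot H^1}$. Strict Sobolev gives $E_0(\psi)>E_0(W)$. Since $E_0(0)=0$, there is $s\in(0,1)$ with $E_0(s\psi)=E_0(W)$ and $\|s\psi\|_{\dot H^1}<\|W\|_{\dot H^1}$. The Duyckaerts--Merle solution $W^-$ is another example of such data. So the variational structure only yields the coercivity $\|\phi\|_{\dot H^1}\le\|W\|_{\dot H^1}$, exactly as in Lemma \ref{energy-control}. Your argument also never uses the hypothesis that $u$ fails to scatter, and that hypothesis is indispensable.

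The missing idea is dynamical. The paper argues by contradiction, assuming $\bd(u(t_n))\ge c$:
\begin{itemize}
\item The limit profile then satisfies $E_0(\phi)=E_0(W)$ and $\|\phi\|_{\dot H^1}<\|W\|_{\dot H^1}$.
\item By the threshold classification for the free equation \cite[Theorem 1.3]{MMMZ25}, the potential-free NLS solution with data $\phi$ scatters in at least one time direction.
\item Because the profile escapes the singularity ($|x(t_n)|/\lambda(t_n)\to\infty$), the construction of Lemma \ref{ENP}, run on a half-line, gives solutions $v_n$ of \eqref{NLSa} with $v_n(0)=\phi_n$. These have uniformly bounded $S$-norm on $[0,\infty)$ or on $(-\infty,0]$.
\item The stability Lemma \ref{stability} then gives $\|u\|_{S([t_n,\infty))}\lesssim 1$ or $\|u\|_{S((-\infty,t_n])}\lesssim 1$ uniformly in $n$. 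Either contradicts \eqref{non-scattering}.
\end{itemize}
Without input of this kind — one-directional scattering of sub-kinetic threshold solutions of the free NLS, transferred to \eqref{NLSa} — the conclusion $\bd(u(t_n))\to 0$ cannot be reached.
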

\begin{proof}
We argue by contradiction. Suppose the conclusion fails. Then there exists a sequence $t_n\to\infty$ such that $|x(t_n)|\lambda(t_n)^{-1}\to\infty$ and 
$$\bd(u(t_n))\geq c$$ 
for some $c>0$. By Theorem \ref{compact} (after passing to a subsequence if necessary), there exists $\phi\in\dot{H^1}(\R^d)$ such that 
\begin{align*}
    u(t_n)=\lambda(t_n)^{-\frac{d-2}{2}}\phi\big(\tfrac{x-x(t_n)}{\lambda(t_n)}\big)+w_n=\phi_n+w_n,
\end{align*}
with
\begin{align}\label{decompose-2}
\big\|w_n\big\|_{\dot H_a^1(\R^d)}\to0,\quad\text{as }n\to\infty.
\end{align}
This convergence, together with $\bd(u(t_n))\geq c$ implies
\begin{align*}
    E_0(\phi)=E_0(W),\qquad \|\phi\|_{\dot H^1}<\|W\|_{\dot H^1}.
\end{align*}
Let $w$ be the solution to NLS (i.e., no potential) with initial data $w(0)=\phi$. By \cite[Theorem 1.3]{MMMZ25}, we have  
\begin{align}\label{wfail}
   \mbox{either }  \|w \|_{S([0,\infty))}<\infty,\,\,\mbox{or }     \|w \|_{S((-\infty,0])}<\infty.
\end{align}

\medskip 

\noindent \textbf{Case I}: $\|w \|_{S([0,\infty))}<\infty$. 

By applying essentially the same proof forward in time as in Lemma \ref{ENP} (see also \cite[Theorem 3.1]{YZ23}),  there exists a forward global solution $v_n$ to $(\text{NLS}_a)$ with $v_n(0) = \phi_n$ such that
$$
\|v_n\|_{S([0,\infty))}\lesssim 1
$$
uniformly for all large $n$. By the stability result (Lemma \ref{stability}), it follows that
$$
\|u\|_{S([t_n,\infty))}\lesssim 1
$$
uniformly for all large $n$, which contradicts \eqref{non-scattering} in Theorem \ref{compact}.

\medskip

\noindent\textbf{ Case II}: $\|w \|_{S([-\infty,0))}<\infty$. 

Similarly, by applying the backward-in-time version of the argument in Lemma \ref{ENP} (see also  \cite[Theorem 3.1]{YZ23}),  there exists a backward global solution $v_n$ to \eqref{NLSa} with $v_n(0) = \phi_n$ such that
$$
\|v_n\|_{S((-\infty,0])}\lesssim 1
$$
 uniformly for all large $n$. By the stability result (Lemma \ref{stability}), we then obtain 
 $$
\|u\|_{S((-\infty,t_n])}\lesssim 1
$$
 uniformly for all large $n$, again contradicting \eqref{non-scattering}.

In either case, we arrive at a contradiction, which completes the proof.
\end{proof}

\section{Modulation analysis}\label{virial3} 
This section is devoted to the modulation analysis, which provides a dynamical framework for tracking solutions near the ground state manifold. A key outcome of this analysis is precise control of the modulation parameters in terms of the distance function. This control plays a crucial role in subsequent global analysis, as has been demonstrated in many previous works. 

Let $\bd(f)$ be the distance function defined in \eqref{dist}. 
We will show that $\bd(f)$ “measures" the distance between $f$ and the ground state manifold as made precise in the following result.  

\begin{proposition}[Variational characterization of $W$]\label{P:vcW}
Assume that $f\in \dot{H}_{a}^{1}(\R^d)$ and $%
E_a(f)=E_0(W)$. Then for any $\varepsilon>0$, there exists $\delta>0$ such that
whenever  $\bd(f)<\delta, $ we have
$$
\inf_{\theta\in\mathbb S^1, \mu>0,X
\in \R^d}\| e^{i\theta}\mathbf g_{\mu,X} (f) -W\|_{\dot H^1}<\varepsilon. 
$$
\end{proposition}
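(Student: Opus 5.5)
We argue by contradiction combined with a concentration-compactness argument. Suppose the statement fails. Then there exist $\varepsilon_0>0$ and a sequence $\{f_n\}\subset\dot H^1_a$ such that
\[
E_a(f_n)=E_0(W),\qquad \bd(f_n)\to 0,\qquad \inf_{\theta,\mu,X}\|e^{i\theta}\mathbf g_{\mu,X}(f_n)-W\|_{\dot H^1}\ge\varepsilon_0 .
\]
Since $\bd(f_n)\to0$, we have $\|f_n\|_{\dot H^1_a}^2\to\|W\|_{\dot H^1}^2$. The energy identity then gives
\[
\tfrac{1}{2^*}\|f_n\|_{2^*}^{2^*}\to \tfrac12\|W\|_{\dot H^1}^2-E_0(W)=\tfrac1{2^*}\|W\|_{2^*}^{2^*}.
\]
Here we use $\|W\|_{\dot H^1}^2=\|W\|_{2^*}^{2^*}$. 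Hence $\{f_n\}$ is an optimizing sequence for the sharp inequality \eqref{ineq:Sob+}, with ratio $\|f_n\|_{2^*}/\|f_n\|_{\dot H^1_a}\to\|W\|_{2^*}/\|W\|_{\dot H^1}$.

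Next we apply the linear profile decomposition of Lemma \ref{LPDa} to $\{f_n\}$. The orthogonality identities give
\[
\|W\|_{\dot H^1}^2=\sum_j\lim_n\|\phi_n^j\|_{\dot H^1_a}^2+\lim_n\|r_n^J\|_{\dot H^1_a}^2,\qquad \|W\|_{2^*}^{2^*}=\sum_j\|\phi^j\|_{2^*}^{2^*}.
\]
By (A.7) and $a>0$, each profile satisfies $\|\phi^j\|_{\dot H^1}\le\lim_n\|\phi_n^j\|_{\dot H^1_a}$, with equality exactly in the translating case. The free sharp Sobolev inequality gives
\[
\|\phi^j\|_{2^*}^{2^*}\le C\,\|\phi^j\|_{\dot H^1}^{2^*}, \qquad C=\|W\|_{2^*}^{2^*}/\|W\|_{\dot H^1}^{2^*}.
\]
Write $A_j:=\lim_n\|\phi_n^j\|_{\dot H^1_a}^2$. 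Combining these facts,
\[
\|W\|_{2^*}^{2^*}\le C\sum_j A_j^{2^*/2}\le C\Big(\sum_j A_j\Big)^{2^*/2}\le C\|W\|_{\dot H^1}^{2^*}=\|W\|_{2^*}^{2^*}.
\]
Since $2^*/2>1$, strict convexity forces all the inequalities to be equalities. So there is exactly one nonzero profile $\phi^1$, the remainder tends to zero in $\dot H^1_a$, and $\|\phi^1\|_{2^*}=\|W\|_{2^*}/\|W\|_{\dot H^1}\cdot\lim_n\|\phi_n^1\|_{\dot H^1_a}$.

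The key step is to exclude the non-translating profile ($x_n^1\equiv0$). In that case, equality in the chain would give $\|\phi^1\|_{2^*}=\frac{\|W\|_{2^*}}{\|W\|_{\dot H^1}}\|\phi^1\|_{\dot H^1_a}$ with $\phi^1\neq0$. This contradicts the strict inequality \eqref{ineq:Sob+} in Proposition \ref{P:Sob}(ii). Therefore $|x_n^1|/\lambda_n^1\to\infty$, and (A.7) yields $\|\phi^1\|_{\dot H^1}=\|W\|_{\dot H^1}$ and $\|\phi^1\|_{2^*}=\|W\|_{2^*}$. We now invoke the equality case of Proposition \ref{P:Sob}(i) with $a=0$: $\phi^1=\alpha W(\lambda(x-x_0))$. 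The two norm identities then force $|\alpha|=\lambda^{\frac{d-2}{2}}$, so $\phi^1=e^{i\theta_0}\mathbf g_{\mu_0,x_0}(W)$ for some $\theta_0,\mu_0,x_0$.

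It remains to conclude. Since $\|r_n^1\|_{\dot H^1_a}\to0$, and $\dot H^1_a$ is equivalent to $\dot H^1$ for $a>0$, we have $\|f_n-\mathbf g_{\lambda_n^1,x_n^1}(\phi^1)\|_{\dot H^1}\to0$. Undoing the symmetry, which is an isometry of $\dot H^1$, we obtain parameters $(\theta_n,\mu_n,X_n)$ with $\|e^{i\theta_n}\mathbf g_{\mu_n,X_n}(f_n)-W\|_{\dot H^1}\to0$. This contradicts the choice of $\varepsilon_0$.

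The main obstacle is the bookkeeping in the profile decomposition. The $\dot H^1$ and $\dot H^1_a$ norms differ on the profiles, and one must check that the kinetic identity transfers to $\|\phi^1\|_{\dot H^1}$ precisely in the translating case. This relies on (A.7) together with the strictness of \eqref{ineq:Sob+}, which is what rules out concentration at the origin.
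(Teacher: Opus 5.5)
Your proof is correct and follows essentially the same route as the paper. Both argue by contradiction, apply Lemma \ref{LPDa}, combine the $\dot H^1_a$ and $L^{2^*}$ decouplings with the sharp free Sobolev inequality and strict superadditivity of $t\mapsto t^{2^*/2}$ to force a single translating profile with vanishing remainder, and then use the equality case to identify that profile as a symmetry of $W$. The differences are cosmetic: you pass to limits rather than normalizing $\|f_n\|_{\dot H^1_a}=\|W\|_{\dot H^1}$, and you exclude the $x_n^1\equiv 0$ profile via the strict inequality \eqref{ineq:Sob+} rather than via $\|\phi^1\|_{\dot H^1}<\|\phi^1\|_{\dot H^1_a}$.
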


\begin{proof}
We argue by contradiction. Suppose the conclusion fails. Then there exist $\varepsilon _{0}>0$ and a sequence $\{f_n\}$ bounded in $\dot H^1$ such that 
\begin{align}
\text{ }E_a(f_{n})=E_0(W),\text{ }\mathbf{d}
(f_{n})\to 0\; (n\to\infty),  \label{vcW 1} \\
\inf_{\theta \in \mathbb{S}^1,\mu >0,X\in \R^d}\Vert  e^{i\theta}\mathbf g_{\mu,X} (f_n) -W\Vert _{\dot{H}^{1}}>\varepsilon _{0}.  \label{vcW 2}
\end{align}
By rescaling $f_n$ as
\[
f_n\to f_n\cdot\frac{\|W\|_{\dot H^1}}{\|f_n\|_{\dha}},
\] we may assume
\begin{align}\label{guan}
\|f_n\|_{\dha}=\|W\|_{\dot H^1}, \ \|f_n\|_{2^*}\to \|W\|_{2^*}, \ \inf_{\theta \in \mathbb{S}^1,\mu >0,X\in \R^d}\Vert  e^{i\theta}\mathbf g_{\mu,X} (f_n) -W\Vert _{\dot{H}^{1}}>\varepsilon _{0}.
\end{align}
Applying Lemma \ref{LPDa} to $\{f_n\}$, we extract a subsequence in $f_n$ (still denoted by $f_n$) and obtain the decomposition
$$
f_n=\textstyle\sum\limits_{j=1}^J\phi_n^j +r_n^J, 
$$
for each $J\in \{1, \cdots, J^*\}$ with the stated properties. In particular, 
by the $\dot H_a^1$ decoupling in Lemma \ref{LPDa} and \eqref{guan}, we have
\begin{align}\label{93}
\|W\|_{\dot H^1}^2= \lim_{n\to \infty}\biggl(\textstyle\sum\limits_{j=1}^J\|\phi_n^j\|_{\dha}^2+\|r_n^J\|_{\dha}^2\biggr)=\textstyle\sum\limits_{j=1}^J\|\phi^j\|_{X^j}^2+ \lim\limits_{n\to \infty}\|r_n^J\|_{\dha}^2,
\end{align}
where $\|\cdot\|_{X^j}=\|\cdot\|_{\dha}$ if $x_n^j\equiv 0$ and $\|\cdot\|_{X^j}=\|\cdot\|_{\dot H^1}$ if $\frac{|x_n^j|}{\lambda_n^j}\to \infty$.  Consequently,
\begin{align}\label{lbd}
\textstyle\sum\limits_{j=1}^{J^*} \|\phi^j\|_{X^j}^2
\le \|W\|_{\dot H^1}^2=\|W\|_{2^*}^{2^*}. 
\end{align}

On the other hand, by decoupling in $L^{2^*}(\mathbb{R}^d)$, \eqref{guan} and the sharp Sobolev embedding, we obtain
\begin{align*}
\|W\|_{2^*}^{2^*}=\lim_{n\to \infty}\|f_n\|_{2^*}^{2^*}=\textstyle\sum\limits_{j=1}^{J^*}\|\phi^j\|_{2^*}^{2^*}\le \textstyle\sum\limits_{j=1}^{J^*}\|\phi^j\|_{\dot H^1}^{2^*}\cdot \tfrac{\|W\|_{2^*}^{2^*}}{\|W\|_{\dot H^1}^{2^*}},
\end{align*}
which implies
\begin{align}\label{second}
\|W\|_{\dot H^1}^{2^*}\le \textstyle\sum\limits_{j=1}^{J^*}\|\phi^j\|_{\dot H^1}^{2^*}.
\end{align}
Combining \eqref{lbd} and \eqref{second}, we deduce  
\begin{align*}
(\textstyle\sum\limits_{j=1}^{J^*} \|\phi^j\|_{X^j}^2
)^{\frac{2^*}{2}}\le \textstyle\sum\limits_{j=1}^{J^*}\|\phi^j \|_{\dot H^1}^{2^*}. 
\end{align*}
Since $a>0$ implies $\|\phi^j\|_{\dot H^1}<\|\phi^j\|_{\dha}$, it follows that 
\[
J^*=1, \quad  \tfrac{|x_n^1|}{\lambda_n^1} \to \infty, \text{ and } \limsup_{n\to \infty} \|r_n^1\|_{2^*} =0. 
\]
Then \eqref{lbd} and \eqref{second} yield 
$$\|\phi^1\|_{\dot H^1}=\|W\|_{\dot H^1}, \ \|\phi^1\|_{2^*}=\|W\|_{2^*}.$$
 Hence $\phi^1=e^{i \theta_0}\mathbf{g}_{\mu_0, x_0}(W)$ for some $\theta_0, \mu_0, x_0$. Moreover, by \eqref{93}, 
$$
f_n=(\lambda_n^1)^{-\frac 12}\phi^1\bigl(\tfrac {x-x_n^1}{\lambda_n^1}\bigr)+r_n^1, \mbox{  \it{and} } \|r_n^1\|_{\dot H^1}\to 0 \; (n\to \infty).
$$
This contradicts the last inequality in \eqref{guan}, completing the proof. 
\end{proof}

Before introducing the modulation decomposition, we recall a coercivity property of a quadratic form.

Expanding the energy functional $E_0$ at the ground state $W$ yields: 
\begin{equation}
E_0(W+g)=E_0(W)+Q(g)+O(\left\Vert g\right\Vert _{\dot{H}^{1}}^{3}+\Vert g \Vert _{\dot{H}^{1}}^{2^*}). \label{Ed}
\end{equation}
Here the quadratic form $Q(g)$ is associated with the linearized operator around $W$ and can be written explicitly as 
\[
Q(g)=\frac{1}{2}\left\Vert g\right\Vert _{\dot{H}^{1}}^{2}-\frac{1}{2}
\int W^{\frac 4{d-2}}\biggl(\tfrac{d+2}{d-2}(\mathbf{Re}g)^{2}+(\mathbf{Im}g)^{2}\biggr).
\]
The following result establishes the coercivity of $Q$ on a co-dimension $d+3$ subspace orthogonal to the negative and null directions of the linearized operator, namely $\{W, iW, W_1, \partial_1 W, \cdots, \partial_dW\}$, where $W_1=\frac{d-2}{2}W+x\cdot \nabla W$.  For a related result in the presence of an inverse square potential, we refer to \cite{YZZ22}; the proof there can be adapted to handle the potential-free case.

\begin{lemma}[Coercivity of quadratic form $Q$, \cite{DM08, Rey}]
\label{L:Q} Let $g\in \dot{H}^{1}(\R^d)$ satisfy 
\[g\perp_{\dot H^1} \{W,iW,W_1,\partial_1 W,\cdots ,\partial_d W\},
\] 
with orthogonality taken with respect to $\dot H^1$-inner product \eqref{dotproduct}. Then there exist constants $c, C>0$ such that
\begin{equation*}
c \left\Vert g\right\Vert _{\dot{H}^{1}}^{2}\leq Q(g)\leq C \left\Vert g\right\Vert _{\dot{H}^{1}}^{2}.
\end{equation*}
\end{lemma}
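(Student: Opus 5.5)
The plan is to study the spectral problem for the linearized operator around $W$ and exploit the explicit structure of its negative and null directions. We write $g=g_1+ig_2$ with $g_1,g_2$ real-valued, so that
\[
Q(g)=\tfrac12\langle L_+g_1,g_1\rangle_{L^2}+\tfrac12\langle L_-g_2,g_2\rangle_{L^2},
\]
where $L_+=-\Delta-\frac{d+2}{d-2}W^{\frac4{d-2}}$ and $L_-=-\Delta-W^{\frac4{d-2}}$. The upper bound $Q(g)\le C\|g\|_{\dot H^1}^2$ is immediate from Hölder and Sobolev, since $W^{\frac4{d-2}}\in L^{d/2}$. The real work is the lower bound, and we treat the real and imaginary parts separately. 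The orthogonality conditions split accordingly: $g_1\perp_{\dot H^1}\{W,W_1,\partial_jW\}$ and $g_2\perp_{\dot H^1} W$. Note that $\langle g,iW\rangle_{\dot H^1}=\langle g_2,W\rangle_{\dot H^1}$ and $\langle g,W\rangle_{\dot H^1}=\langle g_1,W\rangle_{\dot H^1}$.

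For the imaginary part, $L_-W=0$, and $W>0$ is the ground state of the Schrödinger operator $L_-$. Hence $L_-\ge0$ with kernel spanned by $W$, the latter holding by Perron–Frobenius and the nondegeneracy of $W$. Since $W^{\frac4{d-2}}$ decays like $|x|^{-4}$, multiplication by it is a compact perturbation in $\dot H^1$: the operator $\dot H^1\ni f\mapsto(-\Delta)^{-1}(W^{\frac4{d-2}}f)$ is compact. So $\langle L_-f,f\rangle=\|f\|_{\dot H^1}^2-\langle Kf,f\rangle_{\dot H^1}$ with $K$ compact and self-adjoint on $\dot H^1$, and its top eigenvalue equals $1$, attained exactly at $W$. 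The spectral theorem for compact operators then gives a gap: $\langle L_-g_2,g_2\rangle\ge c\|g_2\|_{\dot H^1}^2$ on $W^{\perp_{\dot H^1}}$. Because orthogonality is taken in $\dot H^1$ rather than $L^2$, the eigenvalue problem we actually use is $Kf=\mu f$, that is $-\Delta f=\mu^{-1}W^{\frac4{d-2}}f$, a weighted eigenproblem. In this setting the $\dot H^1$-orthogonality is exactly the natural one.

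For the real part we use the same reformulation, $\langle L_+f,f\rangle=\|f\|^2_{\dot H^1}-\tfrac{d+2}{d-2}\langle Kf,f\rangle_{\dot H^1}$. The known spectral facts are these:
\begin{itemize}
\item $L_+$ has exactly one negative direction, witnessed by $\langle L_+W,W\rangle=-\frac4{d-2}\|W\|_{\dot H^1}^2<0$ (using $-\Delta W=W^{\frac{d+2}{d-2}}$).
\item $\ker L_+=\mathrm{span}\{W_1,\partial_1W,\dots,\partial_dW\}$ in $\dot H^1$. This is the nondegeneracy result of Rey and of Bianchi–Egnell, obtained by separating variables into spherical harmonics and solving ODEs on each harmonic via the conformal invariance of $W$.
\end{itemize}
In terms of the weighted eigenproblem $-\Delta f=\nu W^{\frac4{d-2}}f$, these facts say the following. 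The eigenfunction $W$ corresponds to $\nu=1$. The kernel of $L_+$ corresponds to $\nu=\frac{d+2}{d-2}$ with eigenspace spanned by $W_1$ and $\partial_jW$. All remaining eigenvalues satisfy $\nu>\frac{d+2}{d-2}$ and are bounded away from it, because the spectrum of the compact operator $K$ accumulates only at $0$. Eigenfunctions of $K$ are $\dot H^1$-orthogonal, so $g_1\perp_{\dot H^1}\{W,W_1,\partial_jW\}$ lies in the closed span of the eigenfunctions with $\nu\ge\nu_*>\frac{d+2}{d-2}$. Then
\[
\langle L_+g_1,g_1\rangle\ge\bigl(1-\tfrac{d+2}{(d-2)\nu_*}\bigr)\|g_1\|_{\dot H^1}^2 .
\]
Adding the estimates for $g_1$ and $g_2$ gives the lower bound with $c>0$.

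The main obstacle is the nondegeneracy statement, namely that the kernel of $L_+$ in $\dot H^1$ is exactly $(d+1)$-dimensional, together with the fact that the only weighted eigenvalue below $\frac{d+2}{d-2}$ is $1$. We would handle both with a stereographic projection. It maps the weighted problem $-\Delta f=\nu W^{\frac4{d-2}}f$ onto the conformal Laplacian eigenproblem on $\mathbb S^d$. There the eigenvalues come from spherical harmonics: degree $0$ gives $W$, degree $1$ gives the $d+1$ functions $W_1,\partial_jW$, and degree $\ge2$ gives strictly larger $\nu$. This makes the spectral gap explicit, and once it is in hand the rest of the argument is routine.
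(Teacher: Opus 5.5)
Your proof is correct, but it takes a more explicit route than the one the paper relies on. The paper gives no argument of its own. It defers to Duyckaerts--Merle and Rey, and to the inverse-square analogue of Yang--Zeng--Zhang. There the kernel of $L_+$ is imported from Rey's nondegeneracy theorem, and coercivity on the orthogonal complement is then obtained by a softer, non-explicit argument: typically nonnegativity from the minimizing property of $W$, upgraded to a uniform bound by compactness of the weight $W^{\frac4{d-2}}$.

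You instead diagonalize $K=(-\Delta)^{-1}W^{\frac4{d-2}}$. This operator is positive, self-adjoint and compact on $\dot H^1$: it equals $T^*T$ with $Tf=W^{\frac2{d-2}}f$ compact into $L^2$. It is also injective since $W>0$, so its eigenfunctions are complete. You then compute the whole weighted spectrum by stereographic projection. With the paper's normalization $W^{\frac4{d-2}}=d(d-2)(1+|x|^2)^{-2}$, the eigenvalues are $\nu_k=1+\frac{4k(k+d-1)}{d(d-2)}$. The degree-one harmonics pull back exactly to $\partial_jW$ and to $W_1\propto(1-|x|^2)(1+|x|^2)^{-d/2}$. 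This gives explicit bounds: $\langle L_-g_2,g_2\rangle\ge\frac4{d+2}\|g_2\|_{\dot H^1}^2$ and $\langle L_+g_1,g_1\rangle\ge\frac4{d+4}\|g_1\|_{\dot H^1}^2$, i.e. $c=\frac2{d+4}$ and $C=\frac12$.

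What each route buys: yours proves nondegeneracy and the spectral gap in one stroke, with constants, and it makes clear why $\dot H^1$-orthogonality is the natural condition. The cited route does not use the conformal invariance special to $a=0$, which is why it adapts to $W_a$ in the potential case.

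One small caveat: your appeal to Perron--Frobenius for $L_-$ is not literally an $L^2$ statement when $d=3,4$, since $W\notin L^2$ there and $0$ is a resonance. Likewise, $\langle L_+W,W\rangle<0$ shows only at least one negative direction. Both ``exactly'' claims should be read as outputs of your final stereographic computation, which does supply them.
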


For notational convenience, throughout the remainder of this section we write
\begin{align*}
 \mathcal{H}=\{ iW,W_1,\partial_1 W,\cdots ,\partial_d W\}.
\end{align*}
and recall the notation $ f_{[\theta,\mu,X]}=e^{i\theta}\mathbf g_{\mu,X} (f)$.

Proposition \ref{P:vcW} and Lemma \ref{L:Q}, together with the implicit function theorem, yield the following modulation result. 
\begin{lemma}
\label{L:ift}Let $d\geq 3$. There exist $\delta _{0}>0$, $\eps_0>0$ such that
for any $f\in \dot{H}^{1}(\R^d)$ with $E_a(f)=E(W)$, $\mathbf{d}(f)<\delta _{0}$,
there exists unique triple $(\theta ,\mu ,X)$ in $\mathbb S^1\times\mathbb R^+\times\mathbb R^d$
 such that 
\begin{align}\label{ift1}
f_{[\theta ,\mu ,X]}\bot_{\dot H^1} \mathcal{H},  \mbox{ and } \|f_{[\theta ,\mu ,X]}-W\|_{\dot H^1}<\eps_0. 
\end{align} 
 Moreover, the mapping $f\mapsto (\theta ,\mu ,X)$ is $C^{1}$. The decomposition 
 \begin{align}\label{ift2}
f_{[\theta,\mu,X]} = W + v = W + \alpha W + h,\  h\bot_{\dot H^1}\{W, iW, W_1, \partial_1 W, \cdots, \partial _d W\} 
\end{align}
 satisfies
\begin{align}\label{ift3}
\Vert v\Vert_{\dot H^1}\sim |\alpha|+\Vert h\Vert_{\dot H^1}\sim \|h\|_{\dot H^1}+ \left(\int_{\R^d} \frac{a}{|x|^2}|f|^2\right)^{\frac{1}{2}}\sim |\alpha|\sim \bd(f). 
\end{align}
\end{lemma}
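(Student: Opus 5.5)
The plan is to prove Lemma \ref{L:ift} in two stages. First I obtain the modulation parameters from Proposition \ref{P:vcW} and the implicit function theorem. Then I derive \eqref{ift3} by expanding $E_0$ around $W$, treating the potential term as a nonnegative defect.

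\emph{Existence and uniqueness.} Given $\eps_0>0$ small, Proposition \ref{P:vcW} provides $\delta_0$ such that $\bd(f)<\delta_0$ implies $\|f_{[\theta_1,\mu_1,X_1]}-W\|_{\dot H^1}<\eps_0/2$ for some $(\theta_1,\mu_1,X_1)$. Consider the map
$$\Phi(\theta,\mu,X;g)=\bigl(\langle g_{[\theta,\mu,X]},iW\rangle_{\dot H^1},\ \langle g_{[\theta,\mu,X]},W_1\rangle_{\dot H^1},\ \langle g_{[\theta,\mu,X]},\partial_jW\rangle_{\dot H^1}\bigr)_{j=1}^d ,$$
which is $C^1$ on $\mathbb S^1\times\R^+\times\R^d\times\dot H^1$ and vanishes at $(0,1,0;W)$. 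Its Jacobian in $(\theta,\mu,X)$ at that point is, up to signs, the Gram matrix of $\mathcal H$ in $\dot H^1$. This matrix is diagonal with nonzero entries, for three reasons:
\begin{itemize}
\item $iW$ is $\dot H^1$-orthogonal to every real function;
\item $W_1$ is radial while $\partial_jW$ is odd in $x_j$;
\item $\langle \partial_jW,\partial_kW\rangle_{\dot H^1}=0$ for $j\neq k$ by reflection symmetry.
\end{itemize}
The implicit function theorem, applied near $g=W$ and composed with the group action (which is an isometry of $\dot H^1$), then gives a unique $C^1$ triple $(\theta,\mu,X)$ in a neighborhood of the orbit satisfying \eqref{ift1}. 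Uniqueness among all triples with $\|f_{[\theta,\mu,X]}-W\|_{\dot H^1}<\eps_0$ follows by choosing $\eps_0$ smaller than the IFT neighborhood, since the orbit map $(\theta,\mu,X)\mapsto W_{[\theta,\mu,X]}$ is a proper immersion.

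\emph{Estimates.} Set $g=f_{[\theta,\mu,X]}=W+v$, where $v=\alpha W+h$ with $\alpha=\langle v,W\rangle_{\dot H^1}/\|W\|_{\dot H^1}^2$, so that $h\perp_{\dot H^1}W$ and $h\perp\mathcal H$. The key point is that $E_a$ is not invariant under translation, whereas $E_0$ and $\|\cdot\|_{\dot H^1}$ are. Define $P:=\int\frac{a}{|x|^2}|f|^2\ge0$. Then
$$E_0(g)=E_0(f)=E_0(W)-\tfrac12 P,\qquad \|g\|_{\dot H^1}^2=\|f\|_{\dot H^1_a}^2-P.$$
Insert this into \eqref{Ed}. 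The cross term of $Q$ between $\alpha W$ and $h$ is
$$\alpha\Bigl(\langle W,h\rangle_{\dot H^1}-\tfrac{d+2}{d-2}\int W^{\frac{d+2}{d-2}}\mathbf{Re}\,h\Bigr)=-\tfrac{4}{d-2}\,\alpha\langle W,h\rangle_{\dot H^1}=0,$$
where I used $-\Delta W=W^{\frac{d+2}{d-2}}$ and $h\perp W$. Moreover $Q(W)=-\frac{2}{d-2}\|W\|_{\dot H^1}^2$. This yields
$$Q(h)+\tfrac12P=\tfrac{2}{d-2}\alpha^2\|W\|_{\dot H^1}^2+O(\|v\|_{\dot H^1}^3).$$
By Lemma \ref{L:Q}, $Q(h)\sim\|h\|_{\dot H^1}^2$. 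Since $\|v\|_{\dot H^1}\sim|\alpha|+\|h\|_{\dot H^1}$ (because $h\perp W$) and $\|v\|_{\dot H^1}\le\eps_0$ is small, the cubic term can be absorbed on either side. This gives $\|h\|_{\dot H^1}+P^{1/2}\sim|\alpha|$.

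Finally, from
$$\|f\|_{\dot H^1_a}^2-\|W\|_{\dot H^1}^2=2\alpha\|W\|_{\dot H^1}^2+\alpha^2\|W\|_{\dot H^1}^2+\|h\|_{\dot H^1}^2+P,$$
the last three terms are $O(\alpha^2)$, so $\bd(f)\sim|\alpha|$ once $\eps_0$ is small. This completes the chain \eqref{ift3}.

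I expect the main obstacle to be the bookkeeping of the potential term $P$ under the non-invariant translation and scaling. The identity above handles it: $P$ enters with a favorable (nonnegative) sign in the energy relation and is quadratically small in the kinetic relation. A secondary concern is making sure that the smallness of $\eps_0$ simultaneously allows the cubic remainders to be absorbed and keeps the IFT uniqueness valid.
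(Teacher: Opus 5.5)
Your proposal is correct and follows essentially the same route as the paper. Existence comes from Proposition \ref{P:vcW} plus the implicit function theorem with a diagonal Jacobian; the estimates come from the expansion \eqref{Ed} of $E_0$ at $W$, with $P=\int\frac{a}{|x|^2}|f|^2$ entering as a nonnegative defect, which gives $\|h\|_{\dot H^1}^2+P\sim\alpha^2$ and then $\bd(f)\sim|\alpha|$ from the kinetic identity. You are more explicit than the paper about the vanishing cross term, the value $Q(W)=-\frac{2}{d-2}\|W\|_{\dot H^1}^2$, and uniqueness. One nitpick: for $d\ge 7$ the remainder is $O(\|v\|_{\dot H^1}^{2^*})$ rather than cubic, but it is absorbed in exactly the same way.
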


\begin{proof} We divide the proof into two steps. The uniqueness of parameters follows from the argument in \cite{YZZ22}, and we omit the details. 

\textbf{Step 1.} Define functionals $J_{k}(k=1,\cdots ,d+2)$ on $\mathbb S^1\times \mathbb R^+\times \mathbb R^d$ by
\begin{eqnarray*}
J_{1}(\theta ,\mu ,X,f) &=&\left\langle f_{[\theta ,\mu ,X]},iW\right\rangle
_{\dot{H}^{1}},\text{ \ \ }J_{2}(\theta ,\mu ,X,f)=\left\langle f_{[\theta
,\mu ,X]},W_1\right\rangle _{\dot{H}^{1}}; \\
J_{k}(\theta ,\mu ,X,f) &=&\left\langle f_{[\theta ,\mu
,X]},\partial_{k-2} W\right\rangle _{\dot{H}^{1}},\text{ \ \ }k=3,\cdots ,d+2.
\end{eqnarray*}%
By construction, $J_{k}(0,1,\overrightarrow{0},W)=0$ for all $k$. A direct computation shows that the Jacobian matrix at $(0,1,\overrightarrow{0},W)$ is invertible; in fact, only the diagonal entries are nonzero:
\begin{eqnarray*}
\frac{\partial J_{1}}{\partial \theta }(0,1,\overrightarrow{0},W)
&=&\left\Vert W\right\Vert _{\dot{H}^{1}}^{2},\text{ \ \ \ }\frac{\partial
J_{2}}{\partial \mu }(0,1,\overrightarrow{0},W)=-\left\Vert W_1%
\right\Vert _{\dot{H}^{1}}^{2} \\
\frac{\partial J_{k}}{\partial X_{k-2}}(0,1,\overrightarrow{0},W)
&=&\left\Vert \partial_{k-2} W \right\Vert _{\dot{H}^{1}}^{2},\text{ \ }k=3,\cdots
,d+2.
\end{eqnarray*}%
Hence, by the implicit Function Theorem, there exist $\varepsilon _{0}$,$\eta
_{0}>0$ such that for any $f\in \dot{H}^{1}(\R^d)$ with $\left\Vert f-W\right\Vert
_{\dot{H}^{1}}<\varepsilon _{0}$, there exists a unique $(\theta ,\mu
,X)$ with $|\theta |+|\mu -1|+|X|\leq \eta _{0}$ such that $f_{[\theta
,\mu ,X]}\bot \mathcal{H}$ in $\dot{H}^{1}(\R^d)$.

By Proposition \ref{P:vcW}, for this $\varepsilon _{0}>0$ there exists $%
 \delta _{0}>0$ such that whenever $\mathbf{d}(f)<\delta _{0}$, one can find $(\theta _{0},\mu _{0},X_{0})$ with
\begin{equation*}
\Vert f_{[\theta _{0},\mu _{0},X_{0}]}-W\Vert _{\dot{H}^{1}}<\varepsilon
_{0}.
\end{equation*}%
Applying the implicit function theorem at this point yields \eqref{ift1} and the $C^1$ dependence. 

\textbf{Step 2.} 
Using the decomposition \eqref{ift2} and expanding $E_0$ as in \eqref{Ed} we obtain 
\begin{align*}
& E_a(f) = E_0(f) + \frac{1}{2}\int_{\R^d} \frac{a}{|x|^2}|f|^2 dx=E_0(f_{[\theta,\mu,X]})+ \frac{1}{2}\int_{\R^d} \frac{a}{|x|^2}|f|^2 dx\\
&= E_0(W) +  \alpha^2 Q(W) + Q(h) + \frac{1}{2}\int_{\R^d} \frac{a}{|x|^2}|f|^2 dx+ O(|\alpha|^{\min(3, 2^*)} + \|h\|^{\min(3, 2^*)}_{\dot H^1}).
\end{align*}
Since $Q(W)<0$ and $E_a(f)=E_0(W)$,  Lemma \ref{L:Q} implies
\[
 \|h\|^2_{\dot H^1}+ \int_{\R^d} \frac{a}{|x|^2}|f|^2 dx \sim - \alpha^2 Q(W) + O(|\alpha|^{\min(3,2^*)} + \|h\|^{\min(3, 2^*)}_{\dot H^1}). 
\]
For $\delta_0$ sufficiently small, the error terms are negligible and hence,
$$
 \|h\|^2_{\dot H^1}+ \int_{\R^d} \frac{a}{|x|^2}|f|^2 dx \sim \alpha^2.
$$
Using the definition of $\bd (f)$, we compute
\[
\bd (f) = |\|W\|^2_{\dot H^1} - \|f\|^2_{\dot H^1_a}| = \bigl|(2\alpha + \alpha^2)\|W\|_{\dot H^1}^2 + \|h\|_{\dot H^1}^2 +\int_{\R^d} \frac{a}{|x|^2}|f|^2 dx\bigr |\sim |\alpha|.
\]
Finally, 
$$\Vert v\Vert_{\dot H^1}\sim |\alpha|+\Vert h\Vert_{\dot H^1}\sim |\alpha|,$$
which establishes \eqref{ift3}. 
\end{proof}

Let $u$ be a solution of \eqref{NLSa} on a time interval $I$ such that 
\[E_a(u_{0})=E(W), \ \mathbf{d}(u(t))<\delta _{0}, \ \forall t\in I.
\] From Lemma \ref{L:ift}, there exist $\theta (t),\mu (t),X(t)$
such that the following decomposition holds
\begin{equation}
u_{[\theta (t),\mu (t),X(t)]}(t)=W+v(t)=(1+\alpha (t))W+h(t),
\ h(t)\perp_{\dot H^1} \{W\}\cup \mathcal H. 
\label{md}
\end{equation}%

We have the following estimates on these modulation functions.

The approaches in previous works such as \cite{DM08, MMMZ25, YZZ22}, which rely on a self-similar change of variables, appear to be more involved when handling the inverse-square potential. To avoid these complications, we instead derive the difference equation and carry out the necessary estimates directly, without resorting to a self-similar transformation.
 
\begin{lemma}[Modulation]
\label{L:mod}Let $u(t)$ be the solution mentioned above satisfying the decomposition \eqref{md}. Then for all $t\in I$, we have
\begin{align}\label{mod1} 
\Vert v(t)\Vert_{\dot H^1}\sim |\alpha(t)|+\Vert h(t)\Vert_{\dot H^1}\sim \|h(t)\|_{\dot H^1}+ \left(\int_{\R^d} \tfrac{|u(t)|^2}{|x|^2} dx\right)^{\frac{1}{2}}\sim |\alpha(t)|\sim \bd(u(t)),
\end{align}
\begin{align}
 \left|\frac{\alpha_t}{\mu^2}\right|+\left|\frac{\theta_t}{\mu^2}\right|+\left|\frac{\mu_t}{\mu^3}\right|+\left|\frac{X_t}{\mu^2}-\frac{\mu_t X}{\mu^3}\right| \lesssim \bd(u(t)).  \label{mod2}
\end{align}
Here, the implicit constants do not depend on $t$.
\end{lemma}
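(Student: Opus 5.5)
The estimate \eqref{mod1} is just Lemma \ref{L:ift} applied pointwise in time with $f=u(t)$. This is legitimate because $E_a(u(t))=E_0(W)$ by conservation of energy and $\bd(u(t))<\delta_0$ by assumption. We also use that $\int \frac{a}{|x|^2}|f|^2$ is invariant in the sense required there: the weight in \eqref{ift3} is evaluated on $f=u(t)$ itself. Thus all the work is in \eqref{mod2}. I will derive the equation satisfied by $v$ directly in the modulated frame and then differentiate the orthogonality conditions.

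\textbf{Step 1: the equation for $v$.} Write $u(t,x)=e^{-i\theta}\mu^{\frac{d-2}{2}}(W+v)(t,\mu(x-X))$, obtained by inverting $\mathbf g_{\mu,X}$ and the phase. Set $y=\mu(x-X)$ and substitute into \eqref{NLSa}. This gives, in the $y$ variable,
\begin{align*}
i\partial_t v+\mu^2\Delta v+\mu^2 \tfrac{d+2}{d-2}W^{\frac4{d-2}}\Re v+i\mu^2W^{\frac4{d-2}}\Im v
={}&\theta_t(W+v)-i\tfrac{\mu_t}{\mu}(W_1+y\cdot\nabla v+\tfrac{d-2}{2}v)\\
&-i\mu\bigl(\tfrac{\mu_t}{\mu}X-X_t\bigr)\cdot\nabla(W+v)\\
&+\mu^2\tfrac{a}{|y+\mu X|^2}(W+v)-\mu^2 R(v),
\end{align*}
where $R(v)$ is at least quadratic in $v$. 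The signs here are schematic, and $\frac{\mu_t}{\mu}X-X_t$ is precisely the combination appearing in \eqref{mod2}.

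\textbf{Step 2: the projections.} Pair this equation in $\dot H^1$ with each of $W$, $iW$, $W_1$ and $\partial_jW$. By \eqref{md}, $\langle v,\psi\rangle_{\dot H^1}=\alpha\langle W,\psi\rangle_{\dot H^1}$ for $\psi\in\{W\}\cup\mathcal H$, and this is time independent apart from $\alpha$. So the time derivative term yields $\alpha_t$ when paired with $W$ and zero when paired with elements of $\mathcal H$. The linearized operator is self-adjoint, and its kernel and generalized kernel are spanned by $iW,W_1,\partial_jW$, with $L_+W$ and $L_-$ acting on $W$. Hence the linear terms contribute $O(\mu^2\|v\|_{\dot H^1})$. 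The modulation terms produce a matrix acting on
\[
\Bigl(\tfrac{\theta_t}{\mu^2},\tfrac{\mu_t}{\mu^3},\tfrac{X_t}{\mu^2}-\tfrac{\mu_tX}{\mu^3}\Bigr),
\]
after dividing by $\mu^2$. This matrix is diagonal with nonzero entries up to $O(\|v\|)$ perturbations, as in the Jacobian computation of Lemma \ref{L:ift}. Inverting it gives
\[
\Bigl|\tfrac{\alpha_t}{\mu^2}\Bigr|+(\text{modulation vector})\lesssim \|v\|_{\dot H^1}+\|v\|_{\dot H^1}^2+(\text{potential term}).
\]

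\textbf{Step 3: the potential term (main obstacle).} We must show that $\bigl|\langle \mu^2\frac{a}{|y+\mu X|^2}(W+v),\psi\rangle_{\dot H^1}\bigr|\lesssim \mu^2\bd(u)$, and this is the delicate point. It should be read in the unscaled variable: this quantity is $\int \frac{a}{|x|^2}u\,\overline{\Delta\psi_{[\ldots]}}$, up to scaling. Then apply Cauchy--Schwarz:
\[
\Bigl\|\tfrac{u}{|x|}\Bigr\|_{L^2}\cdot\Bigl\|\tfrac{\Delta\psi_{\mu,X}}{|x|}\Bigr\|_{L^2}.
\]
The first factor is $\lesssim\bd(u)$ by \eqref{mod1}. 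For the second, since $\Delta\psi$ decays like $|y|^{-d}$ or faster for $\psi\in\{W,iW,W_1,\partial_jW\}$, we get $\|\Delta\psi/|\cdot+\mu X|\|_{L^2}\lesssim1$ uniformly in $\mu X$. This follows from Hardy's inequality applied to $\Delta\psi\in\dot H^1$ for $d\ge4$, or by direct estimation. Tracking the scaling gives exactly a factor $\mu^2$. If $\Delta\psi$ lacks the required integrability for some $\psi$ (for instance $W_1$ when $d=4$), I would instead integrate by parts once, putting the gradient on $u$ and using $\|\nabla\psi/|x|^2\|$-type bounds. Combining Steps 2 and 3 with $\|v\|\sim\bd$ yields \eqref{mod2}, with constants independent of $t$.
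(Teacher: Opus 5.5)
Your proposal is correct and is essentially the paper's proof. You derive the equation for $v$ directly in the modulated frame (no self-similar time change) and pair it in $\dot H^1$ with $W, iW, W_1, \partial_jW$, so the $\partial_t h$ contributions vanish by orthogonality; you absorb the $O(\bd)\times$(modulation) cross terms for $\delta_0$ small; and you bound the potential term exactly as the paper does, by Cauchy--Schwarz and a translated Hardy inequality together with $\|u/|x|\|_{L^2}\lesssim \bd(u)$ from \eqref{mod1}.

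Three bookkeeping points need fixing, none of which affects the argument:
\begin{itemize}
\item Inverting \eqref{md} gives $u(t,x)=e^{-i\theta}\mu^{\frac{d-2}{2}}(W+v)(t,\mu x+X)$, not $(W+v)(t,\mu(x-X))$; this is what produces the combination $\frac{X_t}{\mu^2}-\frac{\mu_t X}{\mu^3}$.
\item You should divide the equation by $i$ before pairing, so the time-derivative term is $\partial_t v$ rather than $i\partial_t v$. Otherwise pairing with $W_1$ or $\partial_jW$ produces the uncontrolled quantities $\partial_t\langle h, iW_1\rangle_{\dot H^1}$ and $\partial_t\langle h, i\partial_jW\rangle_{\dot H^1}$.
\item Since $\Delta\psi=O(|y|^{-(d+2)})$ lies in $\dot H^1$ for every $\psi$ in the list (including $W_1$ in $d=4$), your integration-by-parts fallback is never needed.
\end{itemize}
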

\begin{proof}
 \eqref{mod1} follows directly from Lemma \ref{L:ift}, thus it remains to prove \eqref{mod2}. For simplicity we drop $t$ from the modulation functions.

Let $\rho (t,x)=u_{[\theta ,\mu ,X]}(t)$, or equivalently  $u(t,x)=e^{-i\theta }\mu ^{\frac{d-2}{2}}\rho (t,\mu x+X))$. Then $\rho$ satisfies
\begin{equation*}
i\frac{\rho _{t}}{\mu ^{2}}+\Delta \rho +|\rho |^{\frac{4}{d-2}}\rho +
\frac{\theta _{t}}{\mu ^{2}}\rho +i\frac{\mu _{t}}{\mu ^{3}}\frac{d-2}{2}
\rho +i\nabla \rho \cdot (\frac{\mu _{t}}{\mu ^{2}}x+\frac{X_{t}}{\mu^2 })-\frac{a}{\mu^2|x|^2}\rho=0.
\end{equation*}
With the new variable $y=\mu x+X$, \eqref{md} can be written as
\[
\rho(t,y)=W(y)+v(t,y).
\]
This together with the equation for W: 
\[
\quad-\Delta W=W^{\frac{d+2}{d-2}},\ \frac{d-2}{2}W+y\cdot \nabla
W=W_1,
\]
gives the following equation of $v(t,y)$:
\begin{align*}
\frac{\partial _{t}v}{\mu^2}+\mathcal{L}(v)+R(v)-i\frac{\theta_t}{\mu^2}(W+v)+\frac{\mu _{t}}{\mu^3}
W_1+\frac{\mu _{t}}{\mu^3 }(\tfrac{d-2}{2}v+y\cdot \nabla v) \\
+ \nabla (W+v)\cdot (\frac{X_t}{\mu^2}-\frac{\mu_t X}{\mu^3})+i\frac{a }{|y-X|^2}\rho=0,
\end{align*}
 where \begin{align*}
 &\mathcal{L}(v)=(\Delta+W^{\frac{4}{d-2}})v_2+i(-\Delta-\tfrac{d+2}{d-2} W^{\tfrac{4}{d-2}})v_1, \\
&R(v)=-i|W+v|^{\frac{4}{d-2}}(W+v)+iW^{\frac{d+2}{d-2}}+i\tfrac{d+2}{d-2} W^{\tfrac{4}{d-2}}v_{1}-W^{\tfrac{4}{d-2}}v_{2} .
 \end{align*}
Inserting the relation $v=h+\alpha (t)W$ and collecting higher order terms to the right-hand side, we obtain the following equation
\begin{align}
&\frac{\partial_t h}{\mu^2} + \frac{\alpha_t}{\mu^2} W + (\Delta + W^{\frac{4}{d-2}}) h_2  + i \left( -\Delta - \tfrac{d+2}{d-2} W^{\frac{4}{d-2}} \right) h_1 \notag \\
&\quad - i \alpha \frac{4}{d-2} W^{\frac{d+2}{d-2}}- i \frac{\theta_t}{\mu^2} W + \frac{\mu_t}{\mu^3} W_1 + \nabla W \cdot \left(  \frac{X_t}{\mu^2}-\frac{\mu_t}{\mu^3} X \right)+  i \frac{a}{|y - X|^2} \rho \notag \\
&= -R(v) + i \frac{\theta_t}{\mu^2} v - \frac{\mu_t}{\mu^3} \left( \tfrac{d-2}{2} v + \nabla y \cdot \nabla v \right) - \nabla v \cdot \left(\frac{X_t}{\mu^2}-\frac{\mu_t X}{\mu^3} \right) \label{mod3}.
\end{align}

Taking inner product in $\dot H^1(\R^d)$ and applying \eqref{mod1}, we have
\begin{eqnarray*}
&&|\langle \eqref{mod3},W\rangle _{\dot{H}^{1}}|+|\langle
\eqref{mod3},iW\rangle _{\dot{H}^{1}}|+|\langle \eqref{mod3},
W_1\rangle _{\dot{H}^{1}}|+\sum_{k=1}^{d}|\langle \eqref{mod3},\partial_k W\rangle _{\dot{H}^{1}}|  \lesssim \mathcal{E}(t), 
\end{eqnarray*}
where $$\mathcal{E}(t)=\mathbf{d}(u(t))\left( \mathbf{d}(u(t))^{\min(1,\frac{4}{d-2})}+\left|\frac{\theta _{t}}{\mu^2}\right|+\left\vert 
\frac{\mu _{t}}{\mu^3 }\right\vert +\left|\frac{X_t}{\mu^2}-\frac{\mu_t X}{\mu^3}\right|\right). $$ 
 Recall Lemma 5.6 in \cite{DM08} that
\begin{equation*}
\left\Vert R(v)\right\Vert _{L^{\frac{2d}{d+2}}}\lesssim \left\Vert
v\right\Vert _{\dot{H}^{1}}^{2}+\left\Vert v\right\Vert _{\dot{H}^{1}}^{\frac{d+2}{d-2}}\lesssim \bd(u(t))^{\min (2,\frac{d+2}{d-2})}.
\end{equation*}
Taking the dot product between the last expanded equation and $W$, $iW$, $W_1$, $\partial_k W(k=1,\cdots ,d)$ in $\dot{H}^{1}(\R^d)$ separately, we get 
\begin{gather*}
\frac{\alpha _{t}}{\mu^2}\left\Vert W\right\Vert _{\dot{H}^{1}}^{2}=\langle (-\Delta
-W^{\frac{4}{d-2}})h_{2},W\rangle _{\dot{H}^{1}}+\langle - i \tfrac{a}{|y - X|^2} \rho , W\rangle_{\dot H^1} +O(\mathcal{E}(t))
\\ 
\frac{\theta _{t}}{\mu^2}\left\Vert W\right\Vert _{\dot{H}^{1}}^{2}=\langle (-\Delta
-W^{\frac{4}{d-2}})h_{1},W\rangle _{\dot{H}^{1}}-\alpha
 \frac{4}{d-2} \langle W^{\frac{d+2}{d-2}},W\rangle _{\dot{H}^{1}}+\langle  i \tfrac{a}{|y - X|^2} \rho , iW\rangle_{\dot H^1}+O(\mathcal{E}
(t)) \\ 
\frac{\mu _{t}}{\mu^3 }\left\Vert W_1\right\Vert _{\dot{H}
^{1}}^{2}=\langle (-\Delta -W^{\frac{4}{d-2}})h_{2},W_1
\rangle _{\dot{H}^{1}}+\langle - i \tfrac{a}{|y - X|^2} \rho , W_1 \rangle_{\dot H^1}+O(\mathcal{E}(t)) \\ 
\left(\frac{X_t}{\mu^2}-\frac{\mu_t X}{\mu^3} \right)\left\Vert \partial_k W\right\Vert _{\dot{H}^{1}}^{2}=\langle (-\Delta -W^{\frac{4}{d-2}})h_{2},\partial_k W\rangle _{\dot{H}^{1}}+\langle - i \tfrac{a}{|y - X|^2} \rho , \partial_k W\rangle_{\dot H^1}+O(\mathcal{E}(t)).
\end{gather*}

Here we only explain the computation involving the inverse square potential part; the rest computations are similar to the classical NLS case \cite{DM08, MMMZ25}. For instance, from \eqref{mod1}, H\"older and Hardy inequality, we can estimate
\begin{align*}
 &|\langle - i \tfrac{a}{|y - X|^2} \rho , W\rangle_{\dot H^1}|=|\langle  i \tfrac{a}{|y - X|^2} u_{[\theta ,\mu ,X]}  , \Delta W\rangle |\lesssim \Vert\tfrac{ u_{[\theta ,\mu ,X]}}{|y-X|} \Vert_2  \Vert\tfrac{\Delta W}{|y-X|} \Vert_2 \\
=& (\int_{\R^d} \tfrac{|u(x)|^2}{|x|^2} dx)^{\frac{1}{2}} \left\Vert\tfrac{W^{\frac{d+2}{d-2}}}{|y-X|} \right\Vert_2 \lesssim \bd (u(t)) \Vert W^{\frac{d+2}{d-2}} \Vert_{\dot H^1} \lesssim \bd(u(t)).
\end{align*}Similarly, we have
$$
|\langle - i \tfrac{a}{|y - X|^2} \rho , i W\rangle_{\dot H^1}|+|\langle - i \tfrac{a}{|y - X|^2} \rho , W_1\rangle_{\dot H^1}|+|\langle - i \tfrac{a}{|y - X|^2} \rho , \partial_k W\rangle_{\dot H^1}|\lesssim \bd(u(t)).
$$
Therefore,
$$
\left|\frac{\alpha_t}{\mu^2}\right|+\left|\frac{\theta_t}{\mu^2}\right|+\left|\frac{\mu_t}{\mu^3}\right|+\left|\frac{X_t}{\mu^2}-\frac{\mu_t X}{\mu^3}\right| \lesssim \bd(u(t))+O(\mathcal{E}(t)).
$$
 By taking $\delta_0$ small enough, we conclude \eqref{mod2}.
\end{proof}

\section{A rigidity result}\label{S:rigidity}
In this section, we prove a rigidity result that rules out any sub-threshold solution which remain close to the ground state manifold all the time and fails to scatter. By Proposition \ref{blowup}, any sub-threshold solution is global. Thus we consider a solution $u$ to \eqref{NLSa} satisfying 
\begin{align}\label{344}
E_a(u)=E_0(W), \ \|u\|_{\dot H^1_a}<\|W\|_{\dot H^1}, \ \|u\|_{S([0,\infty))}=\infty. 
\end{align}
Our goal is to prove the following

\begin{theorem}\label{rigidity}
    There does not exist a solution to \eqref{NLSa} satisfying \eqref{344} and 
    \begin{align}\label{347}
   \mathbf{d}(u(t))\leq \delta_0 
   \end{align}
   for all $t\ge 0$, provided $\delta_0$ is sufficiently small. 
\end{theorem}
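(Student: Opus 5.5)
We argue by contradiction: assume $u$ satisfies \eqref{344} and \eqref{347} for all $t\ge0$. By Theorem \ref{compact} there are parameters $(\lambda(t),x(t))$ giving precompactness modulo symmetries on $[0,\infty)$. By Lemma \ref{L:ift} and Lemma \ref{L:mod} we also have the modulation decomposition \eqref{md} for every $t\ge0$. The scheme has three parts. First, we tie the compactness parameters to the modulation ones. Second, we prove a localized virial estimate that integrates $\bd(u(t))$ in time. Third, we run a bootstrap showing $\bd(u(t))\to0$ along the whole time axis with an integrable rate, which contradicts the fact that the $\dot H^1_a$-norm of a non-scattering solution with $E_a(u)=E_0(W)$ cannot approach $\|W\|_{\dot H^1}$ while the potential term $\int|u|^2/|x|^2\sim\bd(u)^2$ forces the profile to escape to spatial infinity.

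\textbf{Step 1 (choice of parameters).} Since $\bd(u(t))\le\delta_0$, the solution lies near $W_{[\theta,\mu,X]}$. We may therefore take $\lambda(t)=\mu(t)$ and $x(t)=-X(t)/\mu(t)$, i.e.\ the bubble center $x(t)$ and scale $\mu(t)^{-1}$, with compactness still holding. From \eqref{mod1}, $\int|u|^2/|x|^2\,dx\lesssim\bd(u)^2$. Since $u$ is close to a bubble concentrated at $x(t)$ with scale $\mu^{-1}$, this forces $|x(t)|\mu(t)\gtrsim \bd(u(t))^{-1}$ up to constants, so the bubble is far from the origin relative to its scale. From \eqref{mod2}, the modulation parameters satisfy $|\mu_t/\mu^3|\lesssim \bd$ and $|(x(t))_t|\lesssim\mu\,\bd$ (after unwinding the combination $X_t/\mu^2-\mu_tX/\mu^3$).

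\textbf{Step 2 (center-translated virial).} Define
\[
V_R(t)=\Im\int \nabla\phi_R(x-x(t_0))\cdot\nabla u\,\bar u\,dx,
\]
a truncated virial weight with $\phi_R(x)=R^2\phi(x/R)$, centered at a fixed translated point $x(t_0)$ on a time interval $[t_0,t_1]$. Its derivative is
\[
\tfrac{d}{dt}V_R=8\Big(\|u\|_{\dot H^1}^2-\|u\|_{2^*}^{2^*}\Big)+\big(\text{potential term}\big)+\big(\text{error}\big).
\]
Using $E_a(u)=E_0(W)$ and \eqref{mod1}, the main part is $\gtrsim\bd(u(t))$; the potential term $a\int|u|^2/|x|^3\cdot(\text{weight})$ has a good (repulsive) sign. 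The truncation error involves the tails outside $|x-x(t_0)|\ge R$. Near $W$ the $\dot H^1$-mass outside radius $R/\mu$ around the bubble center is $O(\bd^2)$ plus the $W$-tail, and the $W$-contribution to the virial vanishes by the Pohozaev identity up to $O(\bd)\cdot$(tail), which is small. This requires $|x(t)-x(t_0)|\ll R/\mu$ on the interval. Meanwhile $|V_R|\lesssim R\,\mu^{-1}\bd$, since $W$ is real and so contributes nothing to $\Im$. Integrating gives
\[
\int_{t_0}^{t_1}\bd(u(t))\,dt\lesssim \frac{R}{\mu}\big(\bd(t_0)+\bd(t_1)\big).
\]

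\textbf{Step 3 (bootstrap, the main obstacle).} We need to control $\mu(t)$ and the drift of $x(t)$ uniformly, so that $R$ can be chosen once. I would first normalize $\mu$ by time rescaling and use $|\partial_t(\mu^{-2})|\lesssim\bd$ together with the integrated bound, giving $|\mu^{-2}(t)-\mu^{-2}(s)|\lesssim\int_s^t\bd$. Similarly $|x(t)-x(s)|\lesssim\int\mu\bd$. Under a bootstrap hypothesis that $\mu$ stays comparable to $\mu(t_0)$ and $|x(t)-x(t_0)|\le R/(10\mu)$, Step 2 yields $\int_{t_0}^{\infty}\bd\lesssim R\,\bd(t_0)$, where Lemma \ref{sequencec} supplies times $t_1\to\infty$ with $\bd(t_1)\to0$. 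Feeding this back closes the bootstrap for $\delta_0$ small. The result is exponential decay $\int_t^\infty\bd\lesssim e^{-ct}$, so $\mu(t)\to\mu_\infty\in(0,\infty)$ and $x(t)\to x_\infty$ converge. But then $|x(t)|\mu(t)$ stays bounded, contradicting Step 1 as $\bd(u(t))\to0$; equivalently, $\int|u|^2/|x|^2\to$ a positive limit for $W_{[\mu_\infty,x_\infty]}$, against $\int|u|^2/|x|^2\lesssim\bd^2\to0$. Hence no such $u$ exists.

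The delicate point is the virial error near the bubble: the Pohozaev cancellation must be shown with the weight centered at $x(t_0)$ rather than at the origin. I would handle it by writing $u=W_{[\cdot]}+v$ and estimating the cross terms linearly in $\|v\|_{\dot H^1}\sim\bd$ times the $W$-tail beyond $R$, which is $\lesssim R^{-(d-2)/2}$ and hence small.
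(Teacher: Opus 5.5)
Your route is essentially the paper's. It uses the modulation of Lemma \ref{L:mod} and a truncated virial centered at the initial bubble position with radius $\sim C/\mu(t_0)$. A bootstrap keeps $\mu\sim\mu(t_0)$ and keeps the bubble center near its initial position; you close it via $|\partial_t\mu^{-2}|\lesssim\bd$ and $|\dot x|\lesssim\mu\bd$, exactly as in Proposition \ref{bootstrap}. The contradiction then comes from the bootstrap's uniform bound on your $|x(t)|\mu(t)$ (the paper's $|x(t)|$, cf.\ \eqref{559}) against $|x(t)|\mu(t)\gtrsim\bd(u(t))^{-1}$ along the times of Lemma \ref{sequencec}. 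Theorem \ref{compact}, the exponential decay and the convergence of $\mu(t),x(t)$ are unnecessary: boundedness of the parameters already gives the contradiction.

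One claim in Step 2 is false: the potential term does not have a good sign once the virial is translated. Repulsivity gives a sign only for weights centered at the origin. With weight $\phi_R(x-x_0)$, $x_0=x(t_0)$, the contribution of $a|x|^{-2}$ to $\frac{d}{dt}V_R$ is a positive multiple of $a\int \frac{x\cdot\nabla\phi_R(x-x_0)}{|x|^4}|u|^2\,dx$. Inside the ball one has $x\cdot\nabla\phi_R(x-x_0)=2x\cdot(x-x_0)$, which is negative on the part of $\{|x-x_0|<R\}$ facing the origin, where the bubble carries mass.

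You must bound this term in size instead, as in \eqref{bst2}. Suppose the origin lies at distance at least $4R$ from $x_0$, where $R\sim C/\mu(t_0)$ is the physical truncation radius. Then $|x\cdot\nabla\phi_R(x-x_0)|\lesssim |x|\,|x-x_0|\le |x|^2$ on the support. Hence the term is $\lesssim a\int |u|^2|x|^{-2}\,dx\lesssim \bd^2\le \delta_0\bd$ by \eqref{mod1}.

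In scale-invariant form the needed separation is $|x(t_0)|\mu(t_0)\ge 4C$. Your Step 1 bound $|x(t)|\mu(t)\gtrsim \bd^{-1}\ge\delta_0^{-1}$ supplies it once $\delta_0\ll C^{-1}$. This is exactly the role of the hypothesis $|x(0)|\ge C_2\ge 8C_1$ in Proposition \ref{bootstrap}. With this replacement your argument closes.

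Conversely, the point you single out as delicate is automatic. Every $e^{i\theta}\mathbf g_{\mu,X}W$ is a static solution of the potential-free equation, so the free localized virial functional vanishes on it for any weight and any center; this is \eqref{ARW0}. Only the cross terms with $v$ remain, and you estimate those correctly.
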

The remainder of this section is devoted to the proof of this result. Applying Lemma \ref{L:mod} to $u$, we obtain $C^1$ functions 
\begin{equation*}
\theta : [0,\infty)\rightarrow \R,\quad x:  [0,\infty)\rightarrow \mathbb{R}^d ,\quad \mu: [0,\infty)\rightarrow \mathbb{R}_+, \quad \alpha: [0,\infty)\rightarrow \mathbb{R}_+
\end{equation*}
such that the decomposition 
\begin{align}\label{decompz0}
&u_{[\theta(t),\mu(t),x(t)]}(t)-W=v(t)=\alpha(t)W+h(t)=u_{[\theta(t),\mu(t),x(t)]}(t)-W, \quad h\perp_{\dot H^1} W
\end{align}
holds, together with the estimates in \eqref{mod1}, \eqref{mod2}. 
Note here for notational simplicity, we write $x(t)$ for the translation parameter (previously denoted by $X(t)$).

The key step in proving Theorem \ref{rigidity} is to obtain precise control on scaling and translation functions via a delicate bootstrap argument, as stated below. 
\begin{proposition}\label{bootstrap}  Let $u$ be the solution described above. Then there exists a constant   $C_2>0$ such that, if
$
|x(0)|\ge C_2
$
and $\delta_0$ is sufficiently small, the following global estimates hold: 
\begin{equation}\label{bootse1}
\frac{2}{3}\leq\left|\frac{\mu(t)}{\mu(0)}\right|\leq  \frac{3}{2}\qtq{and} \left|\frac{x(t)}{\mu(t)}-\frac{x(0)}{\mu(0)}\right|\leq \frac{1}{2\mu(0)},\ \forall t\in [0,\infty).
\end{equation}
\end{proposition}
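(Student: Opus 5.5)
The plan is a continuity (bootstrap) argument on $[0,\infty)$. The modulation estimates \eqref{mod2} turn the two quantities in \eqref{bootse1} into time integrals of $\bd(u(t))$, and a localized Virial identity, centered at the initial soliton center, bounds those integrals by $\bd(u(0))\le\delta_0$.

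\textbf{Step 1 (reduction to an integral bound).} Let $T_0=\sup\{T\ge 0:\ \text{\eqref{bootse1} holds on }[0,T]\}$. By continuity of $\mu$ and $x$ we have $T_0>0$. I will show that on $[0,T_0)$ the estimates hold with $\tfrac23,\tfrac32,\tfrac12$ replaced by strictly better constants, which forces $T_0=\infty$.

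From \eqref{mod2},
\[
\Bigl|\tfrac{d}{dt}\mu^{-2}\Bigr|=\Bigl|\tfrac{2\mu_t}{\mu^3}\Bigr|\lesssim \bd(u(t)),
\qquad
\Bigl|\tfrac{d}{dt}\tfrac{x}{\mu}\Bigr|=\mu\,\Bigl|\tfrac{x_t}{\mu^2}-\tfrac{\mu_t x}{\mu^3}\Bigr|\lesssim \mu\,\bd(u(t)).
\]
Under the bootstrap hypothesis $\mu\sim\mu(0)$, it therefore suffices to prove
\begin{equation*}
\int_0^{T}\bd(u(t))\,dt\ \le\ \frac{\eta}{\mu(0)^2}\qquad\text{for all }T<T_0,
\end{equation*}
with $\eta$ small. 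This gives $|\mu(t)^{-2}-\mu(0)^{-2}|\lesssim\eta\,\mu(0)^{-2}$ and $|x(t)/\mu(t)-x(0)/\mu(0)|\lesssim\eta/\mu(0)$, both of which improve \eqref{bootse1}.

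\textbf{Step 2 (centered Virial).} Since $u(t,x)=e^{-i\theta}\mu^{\frac{d-2}2}\rho(\mu x+x)$, the bulk of $u(t)$ lives at physical center $c(t)=-x(t)/\mu(t)$ on scale $1/\mu(t)$. Fix the center $c_0=-x(0)/\mu(0)$, a large $R$, and a radial weight $\varphi$ with $\varphi(y)=|y|^2$ for $|y|\le R$. Set
\[
V(t)=\Im\int \mu(0)^{-1}R\,\nabla\varphi\bigl(\mu(0)R^{-1}(x-c_0)\bigr)\cdot\nabla u\,\bar u\,dx .
\]
Under the bootstrap hypothesis, $|c(t)-c_0|\le \tfrac1{2\mu(0)}$ and $\mu(t)\sim\mu(0)$, so the soliton stays well inside the region where $\varphi$ is exactly quadratic.

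\begin{itemize}
\item \emph{Main term.} Using $E_a(u)=E_0(W)$ and $\|u\|_{\dot H^1_a}<\|W\|_{\dot H^1}$,
\[
\|u\|_{\dot H^1_a}^2-\|u\|_{2^*}^{2^*}=\bigl(\tfrac{2^*}{2}-1\bigr)\bigl(\|W\|_{\dot H^1}^2-\|u\|_{\dot H^1_a}^2\bigr)=c_d\,\bd(u),
\]
so $V'(t)\ge 8c_d\,\bd(u(t))-\mathrm{Err}(t)$.
\item \emph{Error terms.} Split $u=W_{[\ldots]}+v_{[\ldots]}$.
\begin{itemize}
\item The pure-$W$ contribution vanishes: $W$ is a stationary real solution of the free NLS, so its localized Virial derivative is zero.
\item Terms linear in $v$ pick up only the region $|x-c_0|\gtrsim R/\mu(0)$, so they are $\lesssim \epsilon(R)\,\bd(u)$.
\item Quadratic terms are $\lesssim \bd(u)^2$.
\item The potential terms, including the non-sign-definite ones coming from the off-center weight $x\cdot\nabla\varphi/|x|^4$, are bounded by $\int|u|^2/|x|^2\lesssim\bd(u)^2$ by \eqref{mod1}.
\end{itemize}
Choosing $R$ large and $\delta_0$ small yields $V'(t)\ge c\,\bd(u(t))$.
\end{itemize}

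\textbf{Step 3 (closing).} Since $W$ is real, $V$ vanishes at $v=0$. Hence $|V(t)|\lesssim R\,\mu(0)^{-2}\,\bd(u(t))$, with Hardy/Sobolev controlling $v$ on the ball of radius $R/\mu(0)$. Integrating $V'\ge c\,\bd$ over $[0,T]$ gives
\[
\int_0^T\bd(u)\,dt\lesssim R\,\mu(0)^{-2}\,\delta_0 ,
\]
which is Step 1 with $\eta\sim R\delta_0$, closing the bootstrap.

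\textbf{Role of $|x(0)|\ge C_2$ and the main obstacle.} The hypothesis $|x(0)|\ge C_2$ is used in Step 2: the soliton center then sits at distance $\gtrsim C_2/\mu(0)$ from the singularity of $a/|x|^2$. This keeps the singular potential away from the region where $\varphi$ is quadratic, so the potential error terms are genuinely higher order and the center-translated weight does not see the origin. I expect Step 2 to be the main obstacle: showing that every error term is $o(1)\cdot\bd(u)$ uniformly in time. This needs the pointwise vanishing of the $W$-part and the bootstrap localization of the center, so the logic must be arranged so that the Virial bound uses only the bootstrap hypothesis, not its conclusion.
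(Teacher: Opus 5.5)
Your proposal is correct and follows essentially the same route as the paper. It is a continuity argument on $\mu(t)/\mu(0)$ and $x(t)/\mu(t)$, closed by a truncated Virial identity centered at $-x(0)/\mu(0)$ on scale $\sim 1/\mu(0)$: the free error terms vanish at $\tilde W$, the linear-in-$v$ exterior terms are small by the bootstrap localization, the off-center potential term is bounded by $\int |u|^2/|x|^2 \lesssim \mathbf{d}(u)^2$ because $|x(0)|\ge C_2\gg R$, and the $O(\mu(0)^{-2}\mathbf{d}(u))$ bound on the Virial quantity gives $\int_0^T \mathbf{d}(u)\,dt \lesssim \delta_0\,\mu(0)^{-2}$. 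The only differences are cosmetic: you integrate the localized momentum once rather than the localized variance twice, and you track $\mu^{-2}$ rather than $\ln\mu$, which slightly streamlines the use of \eqref{mod2}.
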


\begin{proof} By a standard continuity argument, it suffices to establish a bootstrap improvement. Namely, suppose that on some time interval $[0,T]$ we have the a priori bounds
\begin{equation}\label{bootstrap1}
\frac{1}{2}\leq\left|\frac{\mu(t)}{\mu(0)}\right|\leq 2\qtq{and} \left|\frac{x(t)}{\mu(t)}-\frac{x(0)}{\mu(0)}\right|\leq \frac{1}{\mu(0)},\ \forall t\in [0,T].  
\end{equation}
We will show that these bounds can be improved to: 
\begin{equation}\label{boostrap2}
\frac{2}{3}\leq\left|\frac{\mu(t)}{\mu(0)}\right|\leq \frac{3}{2}\qtq{and} \left|\frac{x(t)}{\mu(t)}-\frac{x(0)}{\mu(0)}\right|\leq \frac{1}{2\mu(0)},\ \forall t\in [0,T].
\end{equation}

The refined control \eqref{boostrap2} on both scaling and translation parameters can be obtained through a global Virial analysis with a spatially translated center. 

Let $\phi(x)$ be a smooth radial function such that 
\begin{equation*}
\phi (x)=
\begin{cases}
|x|^{2}, & |x|\leq 1, \\ 
0, & |x|>2,
\end{cases}
\qquad  \phi''(r)\le 2,
\end{equation*}
and  define $\phi _{R}(x)=R^{2}\phi \bigl(\frac{x}{R}\bigr)$.
We then introduce the truncated Virial quantity
\begin{equation*}
V_{R}(t)=\int_{\mathbb{R}^{d}}\phi _{R}(x)\Big|u\big(t,x-\frac{x(0)}{\mu(0)}\big)\Big|^{2}\,dx.
\end{equation*}%
A direct computation  (see also \cite{YZZ22}) yields
\begin{align*}
& \partial _{t}V_{R}(t) =2\mathbf{Im}\int_{\mathbb{R}^{d}}\overline{u\big(t,x-\tfrac{x(0)}{\mu(0)}\big)}\,\nabla u\big(t,x-\tfrac{x(0)}{\mu(0)}\big)\cdot \nabla \phi _{R}dx;  
\end{align*}
\begin{align*}
 &\partial _{tt}V_{R}(t)\notag
\\ = &4\mathbf{Re}\int_{\mathbb{R}^{d}}(\phi
_{R})_{jk}(x)u_j\big(t,x-\tfrac{x(0)}{\mu(0)}\big)(t)\bar{u}_k\big(t,x-\tfrac{x(0)}{\mu(0)}\big)(t)\,dx-\frac{4}{d}\int_{\mathbb{R}^{d}}(\Delta \phi _{R})\big|u(t,x-\tfrac{x(0)}{\mu(0)})\big|^{2^*}\,dx  \notag \\
 \quad &-\int_{\mathbb{R}^{d}}(\Delta^2 \phi
_{R})\big|u(t,x-\tfrac{x(0)}{\mu(0)})\big|^{2}\,dx+4a\int_{\mathbb{R}^{d}}\tfrac{x-\tfrac{x(0)}{\mu(0)}}{|x-\tfrac{x(0)}{\mu(0)}|^{4}}\cdot \nabla \phi
_{R}\big|u(t,x-\tfrac{x(0)}{\mu(0)})\big|^{2}\,dx  \notag \\
 = &8(\Vert u(t)\Vert _{\dot{H}_a^{1}}^{2}-\Vert u(t)\Vert_{2^*}^{2^*})+4a\int_{\mathbb{R}^{d}}\big[\tfrac{x-\frac{x(0)}{\mu(0)}}{|x-\tfrac{x(0)}{\mu(0)}|^{4}}\cdot \nabla \phi
_{R}-\tfrac{2|x-\tfrac{x(0)}{\mu(0)}|^2}{|x-\tfrac{x(0)}{\mu(0)}|^4}\big]\Big|u\big(t,x-\tfrac{x(0)}{\mu(0)}\big)\Big|^{2}\,dx\notag\\
\quad &+A_R^0\Big(u\big(t,x-\tfrac{x(0)}{\mu(0)}\big)\Big)\\
= & \frac{16}{d-2}(\Vert W\Vert _{\dot{H}^{1}}^{2}-\Vert u(t)\Vert _{\dot{H}_a^{1}}^{2})+4a\int_{\mathbb{R}^{d}}\big[\tfrac{x-\frac{x(0)}{\mu(0)}}{|x-\tfrac{x(0)}{\mu(0)}|^{4}}\cdot \nabla \phi
_{R}-\tfrac{2|x-\tfrac{x(0)}{\mu(0)}|^2}{|x-\tfrac{x(0)}{\mu(0)}|^4}\big]\Big|u\big(t,x-\tfrac{x(0)}{\mu(0)}\big)\Big|^{2}\,dx\notag\\
\quad &+ A_R^0\Big(u\big(t,x-\tfrac{x(0)}{\mu(0)}\big)\Big),
\notag \end{align*}
where $A_R^0$ collects the remaining error terms (excluding the $4a$-term), namely
\begin{eqnarray*}
A_{R}^0(u(t)) &=&\int_{|x|>R}\big(\frac{4\partial _{r}\phi _{R}}{r}-8\big)|\nabla
u(t)|^{2}+(8-\frac{4}{d}\Delta \phi _{R})|u(t)|^{2^*}dx \\
&&-\int_{|x|>R}\Delta ^{2}\phi _{R}|u(t)|^{2}+\frac{4(r\partial _{rr}\phi _{R}-\partial _{r}\phi _{R})}{r^{3}}|x\cdot \nabla u(t)|^{2}dx.
\end{eqnarray*}
In the following, we denote
\[
\tilde W=e^{-i\theta(t)}\mathbf{g}_{\mu(t),x(t)}^{-1} W. 
\]
Since $A_R^0$ coincides with the error term in truncated Virial identity in the free case, results in \cite{DM08, MMMZ25} imply 
\begin{equation} \label{ARW0}
A_R^0\Big(\tilde{W}\big(x-\frac{x(0)}{\mu(0)}\big)\Big)=0.
\end{equation} 
Hence, we can rewrite
\begin{align}\label{V2}
\partial_{tt}V_R(t)
&=\frac{16}{d-2} \ \bd(u(t))+4a\int_{\mathbb{R}^{d}}\big[\tfrac{x-\frac{x(0)}{\mu(0)}}{|x-\frac{x(0)}{\mu(0)}|^{4}}\cdot \nabla \phi
_{R}-\tfrac{2|x-\frac{x(0)}{\mu(0)}|^2}{|x-\frac{x(0)}{\mu(0)}|^4}\big]\Big|u\big(t,x-\frac{x(0)}{\mu(0)}\big)\Big|^{2}\,dx\notag\\
&+A_R^0\Big(u\big(t,x-\frac{x(0)}{\mu(0)}\big)\Big)-A_R^0\Big(\tilde{W}\big(x-\frac{x(0)}{\mu(0)}\big)\Big).
\end{align}

Next we establish two estimates for the error terms appearing above by appropriately choosing $R$ and invoking the bootstrap hypothesis \eqref{bootstrap1}. Let $C_1=C_1(a,d)>0$ be a constant to be determined later and let $\delta_0$ be sufficiently small. Set  
\[
R=\frac{C_1}{\mu(0)}.
\]
Then 
 \begin{equation}\label{bst1}
    \Big|A_R^0\Big(u\big(t,x-\frac{x(0)}{\mu(0)}\big)\Big)-A_R^0\Big(\tilde{W}\big(x-\frac{x(0)}{\mu(0)}\big)\Big)\Big|\leq \frac{4}{d-2}\bd(u(t)),\, \forall t\in[0,T]
\end{equation}
Moreover, if in addition $|x(0)|\geq C_2=C_2(a, d, C_1)$, then
\begin{equation}\label{bst2}
\Bigg|4a\int_{\mathbb{R}^{d}}\Big[\tfrac{x-\frac{x(0)}{\mu(0)}}{|x-\frac{x(0)}{\mu(0)}|^{4}}\cdot \nabla \phi
_{R}-\tfrac{2|x-\frac{x(0)}{\mu(0)}|^2}{|x-\frac{x(0)}{\mu(0)}|^4}\Big]\Big|u(t,x-\tfrac{x(0)}{\mu(0)})\Big|^{2}\,dx\Bigg|\leq \frac{4}{d-2}\bd(u(t,x)),\ \forall t\in[0,T].
\end{equation} 
Assuming \eqref{bst1} and \eqref{bst2} for the moment, we derive the improved bound   \eqref{boostrap2}, as shown below.

Indeed, substituting \eqref{bst1} and \eqref{bst2} into \eqref{V2} yields
\[
\partial_{tt}V_{R}\geq \frac{8}{d-2}\bd(u(t)). 
\]
A directly computation together with \eqref{mod1} yields
\begin{align}
    |\partial_{t}V_{R}(t)|& =2\Big|\mathbf{Im}\int_{\mathbb{R}^{d}}\overline{u(t)}\nabla u(t)\cdot \nabla \phi _{R}(x+\frac{x(0)}{\mu(0)})\,dx\notag\\
    &-\mathbf{Im}\int_{\R^d}\overline{\tilde{W}(t)}\cdot \nabla \tilde{W}(t)\nabla \phi _{R}(x+\frac{x(0)}{\mu(0)})\,dx\Big|\notag\\
    &\lesssim R\int_{|x+\frac{x(0)}{\mu(0)}|\leq 2R} |\overline{u(t)}\nabla u(t)-\overline{\tilde{W}(t)}\cdot \nabla \tilde{W}(t)|\, dx\notag\\
    & \lesssim R^2 \{\|W\|_{\dot H^1}+\| u\|_{L_t^\infty \dot H_x^1}\}\{\|u-\tilde W\|_{L_t^\infty \dot H_x^1} + \|u-\tilde W\|_{L_t^\infty L_x^{2^*}}\}\notag \\
    & \lesssim R^2 \|u-\tilde W\|_{L_t^\infty \dot H_x^1}\lesssim R^2 \bd(u(t)), 
\end{align}
where the implicit constant depends only on $d$ and $a$. 

Recalling that $R=\frac{C_1}{\mu(0)}$, the fundamental theorem of calculus gives
\[
\int_0^{T}\bd(u(t))dt\leq \frac{d-2}{8}\int_0^{T}\partial_{tt}V_{R}dt\leq \frac{d-2}{4}\max_{t\in[0,T]}|\partial_{t}V(t)|\lesssim_{d,a} \frac{C_1^2}{\mu(0)^2}\delta_0. 
\]
Combining this with \eqref{mod2} and the bootstrap assumption \eqref{bootstrap1}, we deduce
\begin{align*}
    \left|\ln \mu(t)-\ln \mu(0)\right|\leq \int_{0}^T\Big|\frac{\mu^{'}(t)}{\mu(t)}\Big|dt&\lesssim \max_{t\in[0,T]}\mu(t)^2\int_{0}^T\bd(u(t)) dt\lesssim_{d,a} 4C_1^2\delta_0,
\end{align*}
and
\begin{align*}
    \left|\frac{x(t)}{\mu(t)}-\frac{x(0)}{\mu(0)}\right|\leq \int_{0}^T\Big|\frac{x^{'}(t)}{\mu(t)}-\frac{\mu^{'}(t)}{\mu(t)^2}x(t)\Big|\,dt&\lesssim \max_{t\in[0,T]}{\mu(t)}\int_{0}^T\bd(u(t)) dt\lesssim_{d,a} 2C_1^2\delta_0\frac{1}{\mu(0)}.
\end{align*}
Thus, for $\delta_0$ is sufficiently small, the improved estimate \eqref{boostrap2} follows. 

It remains to prove \eqref{bst1} and \eqref{bst2}. We begin with \eqref{bst1}. By a change of variables, we obtain
\begin{align*}
\biggl|\int_{|x|\geq R} & \Big|\nabla u\big(t,x-\frac{x(0)}{\mu(0)}\big)\Big|^2 - \Big|\nabla \tilde W\big(t,x-\frac{x(0)}{\mu(0)}\big)\Big|^2 \,dx\biggr| \\
& \lesssim\bigl\{ \|\nabla u(t)\|_{L^2(|x+\frac{x(0)}{\mu(0)}|\geq R)} + \|\nabla W\|_{L^2(|\frac{x-x(t)}{\mu(t)}+\frac{x(0)}{\mu(0)}|\geq R)} \bigr\} \|\nabla v(t)\|_{L^2}.
\end{align*}
Using $R=\frac{C_1}{\mu(0)}$ and the bootstrap assumption \eqref{bootstrap1}, we have
\[
\Big\{\big|\frac{x-x(t)}{\mu(t)}+\frac{x(0)}{\mu(0)}\big|\geq R\Big\} \subset \Big\{\big|\frac{x}{\mu(t)}\big|\geq R-\frac{1}{\mu(0)}\Big\}\subset \Big\{|x|\geq\frac{C_1-1}{2}\Big\}.  
\]
Hence, for any $\eta>0$, there exists $C(\eta)$ such that for any $C_1\geq C(\eta)$
\[
\|\nabla W\|_{L^2(|\frac{x-x(t)}{\mu(t)}+\frac{x(0)}{\mu(0)}|\geq R}\leq \eta. 
\]
Moreover, by \eqref{mod1} and changing variables,
\[
\|\nabla u(t)\|_{{L^2(|x+\frac{x(0)}{\mu(0)}|\geq R)}}\leq \|\nabla W\|_{L^2(|\frac{x-x(t)}{\mu(t)}+\frac{x(0)}{\mu(0)}|\geq R)}+\|\nabla v(t)\|_{L_x^2} \lesssim \eta+\bd(u(t)).
\]
Therefore
\[
\biggl|\int_{|x|\geq R}  \Big|\nabla u\big(t,x-\frac{x(0)}{\mu(0)}\big)\Big|^2 - \Big|\nabla \tilde W\big(t,x-\frac{x(0)}{\mu(0)}\big)\Big|^2 \,dx\biggr| \lesssim (\eta+\delta_0) \bd(u(t)). 
\]
The contribution of the potential energy in \eqref{V2} is handled similarly. By H\"older's inequality and Sobolev embedding, 
\begin{align*}
 &\biggl| \int_{|x|\geq R}  \bigl |u(x-\frac{x(0)}{\mu(0)})|^{2^*} \,dx - \int_{|x|\geq R} |\tilde W(x-\frac{x(0)}{\mu(0)})\bigr|^{2^*}\,dx \biggr| \\
 \lesssim &\bigl\{ \|u\|_{L_x^{2^*}(|x+\frac{x(0)}{\mu(0)}|\geq R)}^{\frac{d+2}{d-2}} + \|W\|_{L_x^{2^*}(|\frac{x-x(t)}{\mu(t)}+\frac{x(0)}{\mu(0)}|\geq R)}^{\frac{d+2}{d-2}}\} \|v(t)\|_{L_x^{2^*}} \\
 \lesssim &(\eta^{\frac{d+2}{d-2}}+\delta_0^{\frac{d+2}{d-2}})\|v(t)\|_{\dot H^1} \\
  \lesssim & (\eta^{\frac{d+2}{d-2}}+\delta_0^{\frac{d+2}{d-2}})\bd(u(t)). 
\end{align*}
The remaining terms in \eqref{V2} are treated analogously, yielding
\[
\left|A_R^0\Big(u\big(t,x-\frac{x(0)}{\mu(0)}\big)\Big)-A_R^0\Big(\tilde{W}\big(x-\frac{x(0)}{\mu(0)}\big)\Big)\right| \lesssim [\eta+\delta_0]\bd(u(t)).
\]
Choosing $\eta>0$, $\delta_0>0$ sufficiently small proves \eqref{bst1}.

We now prove \eqref{bst2}. Since $\phi_{R}$ is supported in $\{|x|\leq 2R\}=\{|x|\leq \frac{2C_1} {\mu(0)}\}$, if  $|x(0)|\geq C_2\geq 8C_1$, then for all $x$ in the support of $ \phi_{R}$,
\[
|\nabla\phi_{R}|\leq 2 |x|\leq 2 \bigl |x-\tfrac{x(0)}{\mu(0)}\bigr|.
\]
Consequently, 
\[
\biggl |4a\int_{\mathbb{R}^{d}}[\tfrac{x-\frac{x(0)}{\mu(0)}}{|x-\frac{x(0)}{\mu(0)}|^{4}}\cdot \nabla \phi
_{R}-\tfrac{2|x-\frac{x(0)}{\mu(0)}|^2}{|x-\frac{x(0)}{\mu(0)}|^4}]|u(t,x-\frac{x(0)}{\mu(0)})|^{2}\,dx\biggr|\lesssim \int_{\R^d}\frac{|u|^2}{|x|^2} dx\lesssim \bd(u(t))^2\lesssim \delta_0\bd(u(t)).
\]
Taking $\delta_0>0$ sufficiently small establishes \eqref{bst2} and hence completes the proof of Proposition \ref{bootstrap}.
\end{proof}

With Proposition \eqref{bootstrap} in hand, we are ready to prove Theorem \ref{rigidity}.
\begin{proof}[Proof of Theorem \ref{rigidity}]
   We first show that for $\delta_0$ sufficiently small, one must have
   \begin{equation}\label{final1}
       |x(0)|\geq C_2.
   \end{equation}
   Indeed, by \eqref{decompz0}, \eqref{mod1} and Hardy's inequality, we obtain
   \begin{align}\label{mnz}
       \int_{\R^d} \frac{|u(t)|^2}{|x|^2} dx=\int_{\R^d} \frac{\biggl |[W+v]_{[-\theta(t),\frac{1}{\mu(t)}, -\frac{x(t)}{\mu(t)}]}\biggr|^2}{|x|^2}\lesssim \bd(u(t))^2
   \end{align} 
   and
   \begin{align}\label{mnz2}
       \int_{\R^d} \frac{|v_{[-\theta(t),\frac{1}{\mu(t)}, -\frac{x(t)}{\mu(t)}]}|^2}{|x|^2}\,dx\lesssim \|v\|_{\dot{H}^1}^2\lesssim \bd(u(t))^2.
   \end{align}
   Combining \eqref{mnz} and \eqref{mnz2}, we deduce that for $\delta_0\ll1$,
   \[
   \int_{\R^d}\frac{|W(x+x(t))|^2}{|x|^2}\,dx=\int_{\R^d} \frac{|W_{[-\theta(t),\frac{1}{\mu(t)}, -\frac{x(t)}{\mu(t)}]}|^2}{|x|^2}\lesssim  \bd(u(t)). 
   \]
This together with the definition of $W$ in \eqref{def:Wa} implies 
      \begin{equation}\label{final2}
      |x(t)|\gtrsim (\bd(u(t)))^{-\frac{1}{d-2}},
   \end{equation}
   and hence \eqref{final1} follows by taking $\delta_0$ sufficiently small. 
   
  On the one hand, by Proposition \ref{bootstrap}, for some $M>0$, we have
   \begin{align}\label{559}
   \mu(t)\sim \mu(0)\qtq{and} |x(t)|\leq M,\quad  \forall t\ge 0.
   \end{align}

  On the other hand, by Lemma \ref{sequencec}, there exists a sequence $t_n\nearrow\infty$ such that $\bd(u(t_n))\to 0$. This together with \eqref{final2} yields
    \begin{equation*}
      |x(t_n)|\to \infty,\quad \textit{as $t_n\to \infty$},
    \end{equation*}
    which clearly contradicts the uniform bound in \eqref{559}.   
\end{proof}

\section{Proof of the main result}\label{S:reduction}

In this section, we complete the proof of the main result (Theorem~\ref{thm1}).
\begin{proof}[Proof of Theorem~\ref{thm1}] 

Arguing as in Section \ref{S:compact}, together with the time-reversibility, the failure of Theorem \ref{thm1} yields the existence of a non-scattering, sub-threshold solution satisfying  
\begin{align*}
E_a(u)=E_0(W), \ \|u\|_{\dot{H}_a^1}<\|W\|_{\dot{H}^1}, \|u\|_{S([0, \infty))}=\infty,
\end{align*}
and $u$ is almost periodic modulo symmetries on $[0,\infty)$. Moreover, by Lemma \ref{sequencec}, there exists a sequence $t_n\nearrow\infty$ such that 
\begin{equation}\label{1048}
\bd (u(t_n))\to 0, \ \mbox{ as } n\to \infty. 
\end{equation}
We distinguish two cases according to the behavior of $\bd(u(t))$. 

\medskip
\noindent \textbf{Case 1. } There exists $t_0>0$ such that 
\begin{equation}\label{1044}
\sup_{t\in[t_0,\infty)}\bd(u(t)) \leq \delta_0.
\end{equation}

\medskip
\noindent\textbf{Case 2}. There exists a sequence $t_n^-\nearrow \infty$ such that  
\begin{equation}\label{1049}
\bd(u(t_n^-))>\delta_0\qtq{for all $n$.}
\end{equation}

\medskip
Case 1 is ruled out immediately applying the rigidity result Theorem \ref{rigidity} together with the time translation symmetry of \eqref{NLSa}. 

It remains to exclude Case 2. Passing to a subsequence if necessary, we may assume
\[
\bd(u(t_n)) < \delta_0, \qtq{and}t_n^- < t_n < t_{n+1}^- \qtq{for all $n$.}
\]
By continuity, there exits $t_n^+ \in (t_n^-, t_n)$ such that for the first time after $t_n^-$, 
\[
\bd(u(t_n^+))=\delta_0. 
\]
This together with the contrapositive statement of Lemma \ref{distant} imply the following: After passing to a futher subsequence, there exist $\lambda_n>0$ and $w_0\in\dot{H}^1(\R^d)$ such that
\begin{align}\label{1122}
\|\lambda_n^{\frac{d-2}{2}}u(t_n^+, \lambda_n x)- w_0\|_{\dot{H}_a^1}\to 0,\quad \text{ as }n\to\infty. 
\end{align}

Let $w$ be the solution to \eqref{NLSa} with $w(0)=w_0$. Then \eqref{1122} implies that $w$ is also a sub-threshold solution satisfying
\begin{align}\label{220}
E_a(w)=E_0(W) \qtq{and}\|w\|_{\dot{H}_a^1}<\|W\|_{\dot{H}^1},
\end{align}
hence is global by Proposition~\ref{blowup}.  

We claim that
\begin{align}
\|w\|_{S([0,\infty))}=\infty \label{217}, \mbox{ and}\\
 \sup_{t\in[0,\infty)} \bd(w(t))\leq \delta_0.\label{218}
\end{align}
Assuming \eqref{217} \eqref{218} for the moment, we obtain a contradiction with Theorem \ref{rigidity}, hence exclude Case 2 and complete the proof our main theorem. 

To prove \eqref{217}, define
\[
\tilde u_n(t,x) = \lambda_n^{\frac{d-2}{2}} u(t_n^+ + \lambda_n^2 t,\lambda_n x).
\]
Then $\tilde u_n$ solves \eqref{NLSa} and satisfies
\begin{align}\label{1143}
\|\tilde u_n(0)-w_0\|_{\dot H^1}\to 0, \ \|\tilde u_n\|_{S([0, \infty))}=\|u\|_{S([t_n^+,\infty))}=\infty. 
\end{align}
Thus \eqref{217} follows from the stability result Lemma~\ref{stability}. 

Next, we prove \eqref{218}. We first claim that for any fixed $T>0$, there exists $n_0(T)$ such that
\begin{equation}\label{239}
\lambda_n^{-2}(t_n-t_n^+)\geq T, \ \forall \ n\ge n_0(T).
\end{equation}
 Suppose this fails. Then, there exists a subsequence in $n$ such that
\[
\lambda_n^{-2}(t_n-t_n^+)< T. 
\]
By the stability result (Lemma \ref{stability}) and the first convergence in \eqref{1143}, we obtain
\begin{equation}\label{256}
\|\nabla[\tilde{u}_n-w]\|_{L_t^{\infty}L_x^2([0,T]\times\R^d)}\to 0 \mbox{ as } n\to\infty.
\end{equation}

Using this norm convergence and a change of variables we deduce 
\begin{align*}
\inf_{t\in [t_n^+,t_n]}\bd(u(t))=\inf_{t\in[0,\lambda_n^{-2}(t_n-t_n^+)]}\bd(\tilde{u}_n(t))>\frac{1}{2}\inf_{t\in[0,T]}\bd(w(t))>0,
\end{align*}
which contradicts $\bd(u(t_n))\to0$. Hence \eqref{239} is proved.

Finally, using \eqref{239}, the triangle inequality and \eqref{256}, we obtain for the fixed $T>0$,
\begin{align*}
\bd(w(T)) & \leq \bd(\tilde u_n(T)) +  \bigl |\|\tilde u_n(T)\|_{\dot{H}_a^1}^2 - \|w(T)\|_{\dot{H}_a^1}^2 \bigr|\\
& \leq \bd(u(t_n^++\lambda_n^2 T)) + C\|{\tilde u_n(T)}-w(T)\|_{\dot{H}_a^1} \\
& \leq \sup_{t\in [t_n^+,t_n]}\bd(u(t)) + o_n(1)\\
& \leq \delta_0 + o_n(1).
\end{align*}
 Letting $n\to\infty$, we conclude $\bd(w(T))\leq \delta_0$. Since $T>0$ is arbitrary, this proves \eqref{218} and completes the proof of Theorem \ref{thm1}. 
 \end{proof}


\end{document}